\documentclass[11pt, a4paper, oneside]{amsart}
\usepackage[T1]{fontenc}
\usepackage[utf8]{inputenc}
\usepackage[english]{babel}
\usepackage{lmodern}
\usepackage{exscale}
\usepackage[babel]{microtype}
\usepackage{amsmath, amssymb, mathtools, mathrsfs, amsthm}
\usepackage{thmtools}
\usepackage{hyperref}
\usepackage{comment}
\usepackage{fullpage}
\usepackage{enumitem}
\usepackage{todonotes}
\usepackage[capitalise,noabbrev]{cleveref}

\usepackage{graphicx}
\graphicspath{{./images/}}

\usepackage{tikz}
\usetikzlibrary{cd,shapes.geometric,positioning,patterns,calc}
\declaretheorem[numberwithin=section]{theorem}
\declaretheorem[sibling=theorem]{proposition}
\declaretheorem[sibling=theorem]{lemma}
\declaretheorem[sibling=theorem]{corollary}
\declaretheorem[sibling=theorem]{definition}
\declaretheorem[style=remark, sibling=theorem]{remark}

\newcommand{\tens}[1]{
\mathbin{\mathop{\otimes}\displaylimits_{#1}}
}
\newcommand{\field}{\mathbb{Q}}
\newcommand{\FB}{\textnormal{FB}}
\newcommand{\FI}{\textnormal{FI}}
\newcommand{\FIs}{\FI^{\#}}

\newcommand{\Ind}{\operatorname{Ind}}
\newcommand{\Res}{\operatorname{Res}}
\newcommand{\Hom}{\operatorname{Hom}}

\newcommand{\op}{\textnormal{op}}
\newcommand{\im}{\operatorname{im}}
\newcommand{\crosseffect}{\operatorname{\mathbf{cr}}}
\newcommand{\emb}[1]{\operatorname{Emb}({#1}, \mathbb{R}^d)}
\newcommand{\imm}[1]{\operatorname{Imm}({#1}, \mathbb{R}^d)}
\newcommand{\embc}[1]{\operatorname{Emb}_c({#1}, \mathbb{R}^d)}
\newcommand{\immc}[1]{\operatorname{Imm}_c({#1}, \mathbb{R}^d)}
\newcommand{\embi}[1]{\operatorname{Emb}({#1}, \mathbb{R}^d)_{\iota}}
\newcommand{\immi}[1]{\operatorname{Imm}({#1}, \mathbb{R}^d)_{\iota}}
\newcommand{\embci}[1]{\operatorname{Emb}_c({#1}, \mathbb{R}^d)_{\iota}}
\newcommand{\immci}[1]{\operatorname{Imm}_c({#1}, \mathbb{R}^d)_{\iota}}
\newcommand{\oemb}[1]{\overline{\operatorname{Emb}}({#1}, \mathbb{R}^d)}
\newcommand{\oembc}[1]{\overline{\operatorname{Emb}}_c({#1}, \mathbb{R}^d)}
\newcommand{\oembi}[1]{\overline{\operatorname{Emb}}({#1}, \mathbb{R}^d)_{\iota}}
\newcommand{\oembci}[1]{\overline{\operatorname{Emb}}_c({#1}, \mathbb{R}^d)_{\iota}}
\newcommand{\cube}{\mathcal{P}(\omega)}
\newcommand{\SL}{\mathcal{SL}_{m,d}}
\newcommand{\oSL}{\overline{\mathcal{SL}}_{m,d}}
\newcommand{\iSL}{\mathcal{SL}^{\textnormal{imm}}_{m,d}}
\newcommand{\slgraph}{\mathcal{L}}
\newcommand{\mlgraph}{\mathcal{L}}
\newcommand{\ML}{\mathcal{ML}_{\iota}}
\newcommand{\oML}{\overline{\mathcal{ML}}_{\iota}}
\newcommand{\iML}{\mathcal{ML}^{\textnormal{imm}}_{\iota}}
\newcommand{\tML}{\widetilde{\mathcal{ML}}^{\textnormal{imm}}_{\iota}}
\newcommand{\Np}{\mathbb{N}_{>0}}
\newcommand{\R}{\mathbb{R}}
\newcommand{\A}{\mathcal{A}}
\newcommand{\C}{\mathcal{C}}
\newcommand{\Rm}{\mathbb{R}^{m}}

\begin{document}
\title{Representation stability of string links and manifold links}
\author{Filipp Buryak}
\address{Universit\'e Paris Cit\'e, Sorbonne Universit\'e, CNRS, IMJ-PRG, F-75013 Paris, France}
\begin{abstract} 
  We prove representation stability for rational cohomology of spaces of string links and manifold links in Euclidean space as the number of links grows. For string links we consider the component of the standard embedding of disjoint copies of $\R^{m}$ into $\R^{d}$, where $m \geq 1$ and $d \geq m+3$. For manifold links we consider the component determined by disjoint copies of a fixed embedding of a closed smooth $m$-manifold into $\R^{d-2}$, where $d \geq 3$. In both cases, the rational cohomology groups form finitely generated $\FIs$-modules in each degree with generation arity at most $n\frac{d-2}{d-m-2}$ in degree $n$. We also obtain explicit generation bounds for the rational homotopy groups. The proofs combine geometric constructions of structure maps, hairy graph complex models for embeddings modulo immersions and a functorial framework that allows generation bounds to be established after forgetting the symmetric group actions.
\end{abstract}
% \subjclass[2020]{57N35}
\keywords{embedding spaces, FI-modules, representation stability}
\maketitle

\section{Introduction}
Properties of configuration spaces are of interest as these spaces play a central role in various topics of algebraic topology with deep connections to other fields. A classical result in this area is the homological stability of unordered configuration spaces of connected open manifolds~\cite{mcduffConfigurationSpacesPositive1975,segalTopologySpacesRational1979}. It states that in any fixed degree the homology groups eventually stabilise as the number of points grows. For ordered configuration spaces $\textnormal{Conf}(n, M)$ the situation is different, the classical notion of homological stability fails. This can be explained by the fact that there is a canonical action of the symmetric groups $\Sigma_n$ on the spaces $\textnormal{Conf}(n, M)$, which we fail to account for. Studying this phenomenon Church, Farb and Ellenberg in \cite{Church2012}, \cite{CF2013}, \cite{CEF2015} developed an algebraic framework of $\FI$-modules which allows one to study stability of a sequence of $\Sigma_n$ representations. There they showed that for a manifold $M$ the spaces $\textnormal{Conf}(n, M)$ satisfy this new stability, known as representation stability, under some mild assumptions on $M$.

In this paper, we generalize the spaces in question by replacing points with manifolds. We are interested in the two following cases:
\begin{itemize}
  \item The spaces of string links $\embci{\Rm \times \underline{r}}$, that is, embeddings that coincide with a fixed linear embedding $\iota: \Rm \times \underline{r} \hookrightarrow \R^{m+1} \subset \R^d$ outside of a compact. Here we restrict our attention to the component containing $\iota$.
  \item The spaces of manifold links $\embi{M \times \underline{r}}$ where $M$ is a smooth closed $m$ dimensional manifold and $\iota$ a fixed embedding $M \hookrightarrow \R^{d-2}$. Again we restrict our attention to the component containing the embedding $\iota_{r}: M \times \underline{r} \hookrightarrow \R^{d-2} \subset \R^{d}$ obtained by stacking copies of $\iota$.
\end{itemize}
In both of these cases we show that the spaces in question assemble together into pointed homotopy $\FIs$-spaces, i.e. functors $\FIs \to \textnormal{Ho}(\textbf{Top}_{*})$, where $\FIs$ is the category of finite sets and partial injections. We call them $\SL$ and $\ML$ in the cases of spaces of string links and manifold links respectively. Passing to rational cohomology, we obtain two graded $\FIs$-modules  $H^*(\SL)$ and $H^*(\ML)$. The goal of this paper is to prove that these $\FIs$-modules are finitely generated and to find the explicit bounds for the arities in which they are generated. 

The strategy of the proof in both cases is the similar. We explain it in the case of manifold links since it is more involved. The first step is to construct the homotopy fiber sequence
\begin{equation}\label{eq:intro}
  \oembi{M \times \underline{r}} \to \embi{M \times \underline{r}} \to \widetilde{\operatorname{Imm}}(M \times \underline{r}, \R^d)_{\iota},
\end{equation}
where $\widetilde{\operatorname{Imm}}(M \times \underline{r}, \R^d)_{\iota}$ is a suitably chosen covering space of $\immi{M \times \underline{r}}$.
Further, we ask this sequence to be natural meaning that there is a sequence of homotopy $\FIs$-spaces
\[
\oML \to \ML \to \tML,
\]
which evaluates to \eqref{eq:intro} on the set $\underline{r}$ for any $r \in \mathbb{N}$.
 
The second step is to use the rational models of Fresse, Turchin, Willwacher \cite{FTW2020} describing the rational homotopy type of the spaces of embeddings modulo immersions $\oemb{M}$ in terms of graph complexes. By analyzing the behavior of these graph complexes we prove the statement of the theorem for the graded $\FIs$-module $H^{*}(\oML; \field)$.

The last step is to use Serre spectral sequence for rational cohomology with local coefficients. By carefully analyzing the covering spaces $\widetilde{\operatorname{Imm}}(M \times \underline{r}, \R^d)_{\iota}$ and, in particular, the action of their fundamental groups on the cohomology of the fiber, we are able to deduce the result. The two main theorems that we obtain are the following.
\begin{theorem}[\cref{thm:cohomology_ml}, \cref{prop:bound_improvement_ml}]
  Let $d \geq 3$ and let $\iota$ be an embedding of an $m$ dimensional closed manifold $M$ into $\R^{d-2}$. The graded $\FIs$-module $H^*(\ML; \field)$ is of finite type with slope  $\leq \frac{d-2}{d-m-2}$. If $d \geq 2m+2$ then the slope can be improved to $\leq \frac{2}{d-2m-1}$.
\end{theorem}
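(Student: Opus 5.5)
The plan is the three-step strategy outlined above, applied to the fibre sequence of homotopy $\FIs$-spaces $\oML \to \ML \to \tML$.

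\textbf{Step 1: the fibre.} I first treat embeddings modulo immersions. By the models of Fresse--Turchin--Willwacher \cite{FTW2020}, the rational homotopy type of $\oembi{M \times \underline{r}}$ is given by a hairy graph complex. Its hairs are decorated by $\tilde H_*(M\times\underline{r})$, so each hair carries a label in $\underline{r}$. Functoriality in $\FIs$ acts on these labels: forgetting a component kills graphs with hairs on it, and injections relabel.

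A graph contributing to cohomological degree $n$ uses only the labels appearing on its hairs. Hence the module is generated in arities at most the maximal number of hairs of a graph of total degree $\le n$. I then need the degree count in the hairy graph complex for codimension $d-m$:
\begin{itemize}
\item each internal vertex has degree $-d$ and valence $\ge 3$;
\item each edge has degree $d-1$;
\item each hair contributes about $-(m+1)$ plus the degree of its decoration.
\end{itemize}
An Euler characteristic estimate should show that a connected graph with $h$ hairs has degree at least $h\,(d-m-2)$, up to a bounded correction. Cohomology classes are products (symmetric powers) of such connected pieces, and the resulting dual degree bound gives that degree $n$ needs at most $n\frac{d-2}{d-m-2}$ labels; the numerator $d-2$ absorbs the worst decoration degrees.

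To keep the $\FIs$ structure honest, I would use the stated framework: generation bounds may be checked after forgetting symmetric group actions, i.e.\ as $\FI$-type surjectivity of the structure maps on underlying graded vector spaces. When $d \ge 2m+2$, the same count with trees dominating (loop order zero) yields the sharper bound $\frac{2}{d-2m-1}$.

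\textbf{Step 2: the base.} Next I analyse $\tML$. The immersion space $\immi{M\times\underline r}$ is a product of $r$ copies of $\immi{M}$. By Smale--Hirsch, its rational cohomology is a tensor power, so as an $\FIs$-module it is generated in arity $\le n$ in degree $n$, once degree-zero classes are handled. The covering $\widetilde{\operatorname{Imm}}$ is also a product of copies, so the same holds for it. Since $\frac{d-2}{d-m-2}\ge 1$, this slope is within the target bound.

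\textbf{Step 3: the Serre spectral sequence.} Finally I run the Serre spectral sequence with local coefficients. The main obstacle is the monodromy action of $\pi_1$ of the base on $H^*$ of the fibre. I expect the covering to be chosen precisely so that this action is unipotent, or factors through a finite group acting compatibly with the graph model. In the first case I would filter the coefficients by invariants to reduce to trivial coefficients. In the second, rationally, the transfer reduces to invariants.

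The $E_2$ page is then built from tensor products of finite-type $\FIs$-modules. Over $\FIs$ in characteristic zero these are sums of induced modules, so slopes combine additively under tensor products, degree by degree. Passing to subquotients along the spectral sequence preserves finite generation and the generation bound; I would use that $\FIs$-modules over $\field$ form a semisimple-like abelian category. Together these give the bound on $H^*(\ML;\field)$.
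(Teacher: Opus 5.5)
Steps 1 and 2 use the same ingredients as the paper, apart from two details. First, the hairs should be decorated by $H^*(M)^{\times r}$, the reduced cohomology of $\hat M_{\underline r}$ with a transferred $\mathbf{C}_\infty$-structure. Your $\widetilde H_*(M\times\underline r)$ does not split into $r$ labelled summands in degree $0$, so it does not label hairs. Second, $\tML(S)=\prod_{s\in S}\tML(\{s\})$ needs a proof that $\pi_1\ML(S)\to\prod_s\pi_1\ML(\{s\})$ is surjective. The paper gets this by moving the components into disjoint slabs.

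The genuine gap is Step 3. You never establish that the monodromy of $\oML(T)\to\ML(T)\to\tML(T)$ is unipotent or factors through a finite group; you only expect it. The construction does not provide it either: the covering $\tML$ is chosen only so that the homotopy fibre is the connected space $\oML(T)$. Without control of the local system, $E_2^{p,q}(T)=H^p(\tML(T);H^q(\oML)(T))$ is not a tensor product of $H^p(\tML)$ and $H^q(\oML)$. So ``slopes add under tensor products'' has nothing to act on, and the bound on $H^*(\ML)$ does not follow.

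There is also a gap in the improved bound. You give the base slope $1$ and justify this by $\frac{d-2}{d-m-2}\ge1$, but $\frac{2}{d-2m-1}<1$ once $d\ge 2m+4$. You need that $\iML(\{s\})$, hence $\tML(\{s\})$, is $(d-2m-1)$-connected: it is a section space over the $m$-manifold $M$ with $(d-m-1)$-connected fibre $V^d_m$. Then $H^p(\tML)$ is generated in arity $\le\frac{p}{d-2m}\le\frac{2p}{d-2m-1}$. The fibre side is fine, since $\deg G\ge\frac{d-2m-1}{2}|V^E|$.

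The idea you are missing is that the paper never needs to know the monodromy itself, only how it interacts with the cross-effect decomposition. It shows that $H^q(\oML)(T)=\bigoplus_{S\subset T}\crosseffect H^q(\oML)(S)$ is a splitting into $\pi_1\tML(T)$-subrepresentations, with $\pi_1\tML(T\setminus S)$ acting trivially on the $S$-summand. The inputs are as follows.
\begin{itemize}
\item The action $(x,\eta)\mapsto(x,\eta*\gamma_T)$ commutes strictly with $\phi^T_S$.
\item For every $\gamma_T\in\Omega\tML(T)$ with projection $\gamma_S$ one has $\gamma_T\psi^T_S\simeq\psi^T_S\gamma_S$. That is, loops in the added components act trivially on the image of $\psi^T_S$. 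This is not formal and is proved geometrically: the added component sits in a slab $\R^{d-3}\times I\times\R^2$ avoided by the others, and connectedness of $\oML(\{r+1\})$, realised inside that slab, undoes the extra loop $\gamma_{r+1}$.
\item Consequently the idempotents $(\phi^{S}_{S'})^{*}(\psi^{S}_{S'})^{*}$ commute with the action.
\end{itemize}
Set $\kappa=\frac{d-2}{d-m-2}$. Künneth with local coefficients on $\tML(T)=\tML(S)\times\tML(T\setminus S)$ then gives
\[
E_2^{p,q}(T)=\bigoplus_{\substack{S\subset T\\ |S|\le q\kappa}}\ \bigoplus_{i+j=p}H^i\bigl(\tML(S);\crosseffect H^q(\oML)(S)\bigr)\otimes H^j\bigl(\tML(T\setminus S)\bigr).
\]
This is generated in arity $\le q\kappa+p\le(p+q)\kappa$ whatever $\pi_1\tML(S)$ does on the $S$-summand, because $|S|$ is already bounded. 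Thickness passes the bound to $E_\infty$ and to $H^n(\ML)$.

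Finally, finite type also needs $\dim H^n(\ML(S))<\infty$. The paper obtains this separately from the finite CW type of the embedding spaces, and your proposal does not address it.
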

\begin{theorem}[\cref{thm:cohomology_sl}, \cref{prop:bound_improvement_sl}]
  Let $m \geq 1$ and $d \geq m+3$. The graded $\FIs$-module $H^*(\SL; \field)$ is of finite type with slope $\leq \frac{d-2}{d-m-2}$.
\end{theorem}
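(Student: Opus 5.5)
The plan follows the three-step strategy of the introduction, specialised to string links, where the immersion part is simpler. First I construct a natural homotopy fiber sequence of pointed homotopy $\FIs$-spaces
\[
\oSL \to \SL \to \iSL,
\]
which on $\underline{r}$ evaluates to $\oembci{\Rm\times\underline{r}} \to \embci{\Rm\times\underline{r}} \to \immci{\Rm\times\underline{r}}$. The structure maps for partial injections are built geometrically. Injections relabel components. The partially defined part (forgetting a component) is realised by pushing the forgotten strand away to infinity inside $\R^{m+1}\subset\R^d$, and this is well defined up to homotopy because the strands are standard outside a compact set.

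The base is easy to handle. By Smale--Hirsch, $\immci{\Rm\times\underline{r}}\simeq (\Omega^m V_{d,m})^{r}$. Since $d\ge m+3$, the Stiefel manifold $V_{d,m}$ is $(d-m-1)$-connected, so $\Omega^m V_{d,m}$ is simply connected. Moreover $\iSL$ is the $\FIs$-space $\underline{r}\mapsto X^{r}$ with $X=\Omega^m V_{d,m}$. Its rational cohomology $H^*(X)^{\otimes r}$ is a Künneth-type $\FIs$-module, generated in degree $n$ in arity at most $n/(d-m-2)$, because every nonunit class of $X$ has degree $\ge d-m-2$ (the first homotopy group of $V_{d,m}$ sits in degree $d-m$). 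For the fiber I use the Fresse--Turchin--Willwacher models: the rational homotopy type of $\oembci{\Rm\times\underline{r}}$ is given by hairy graph complexes whose hairs are coloured by $\underline{r}$. The forgetful maps correspond to deleting graphs with hairs of a given colour. A connected hairy graph in homotopy degree $n$ uses at most roughly $n\frac{d-2}{d-m-2}$ hair colours. The reason is that each hair contributes at least $d-m-2$ to the degree, while loops and internal vertices contribute nonnegatively once $d\ge m+3$. This gives generation bounds for $\pi_*\otimes\field$, and the cohomology bound of slope $\frac{d-2}{d-m-2}$ follows by passing through the Chevalley--Eilenberg or Sullivan model, since products of generators add both arity and degree linearly.

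To combine fiber and base I use the Serre spectral sequence. Because the base is simply connected here, the coefficients are untwisted and $E_2=H^*(\iSL)\otimes H^*(\oSL)$ as $\FIs$-modules. I would invoke the paper's functorial framework: for $\FIs$-modules, finite generation with a given slope is closed under tensor products, subquotients and extensions, and it can be checked after forgetting the symmetric group actions, since $\FIs$-modules are sums of induced $\FB$-modules. The slope of $E_2$ is then the maximum of the two slopes, which is $\frac{d-2}{d-m-2}$. Since the spectral sequence is natural in $\FIs$, the $E_\infty$ page and the filtration of $H^*(\SL)$ inherit this bound.

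The main obstacle is the first step: making the fiber sequence strictly natural as homotopy $\FIs$-spaces. In particular, the "forget a strand" maps must be compatible with the immersion-level maps and with the graph-complex model. I would handle this by modelling everything through the cube/Goodwillie--Weiss tower over the power set of $\underline{r}$ and checking compatibility on homotopy categories only. The sharp degree count in the graph complex is a secondary difficulty.
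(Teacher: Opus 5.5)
\textbf{The gap: the base of your fibration need not be simply connected.} You claim that $\Omega^m V_{d,m}$ is simply connected because $d\ge m+3$. But $V_{d,m}$ is only $(d-m-1)$-connected, so $\Omega^m V_{d,m}$ is only $(d-2m-1)$-connected. It is therefore simply connected exactly when $d\ge 2m+2$.

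In the range $m+3\le d\le 2m+1$, which is non-empty for $m\ge 2$, the claim fails even rationally. For example, take $d=2m+1$ with $m$ odd, say $m=3$, $d=7$. Then $\pi_1\iSL(\underline{1})\cong\pi_{m+1}(V_{2m+1,m})\cong\mathbb{Z}$.

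This breaks your Serre spectral sequence step. The homotopy fibre of $\SL(S)\to\iSL(S)$ has $\pi_0$ in bijection with $\pi_1\iSL(S)/\im\pi_1\SL(S)$. So it need not be the connected space $\oSL(S)$, which is only the basepoint component. When it is not connected, $\pi_1$ of the base permutes its components, so already $H^0$ of the fibre is a non-trivial local system. Hence $E_2=H^*(\iSL)\otimes H^*(\oSL)$ is not available, and as written your argument only covers $d\ge 2m+2$.

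\textbf{How the paper avoids this.} The paper uses no Serre spectral sequence for string links. The idea you are missing is that $\SL(S)$, $\oSL(S)$ and $\iSL(S)$ are $m$-fold loop spaces, via stacking little cubes in the $\Rm$-direction, and the restriction maps $\phi^T_S$ are loop maps. This gives two things.
\begin{enumerate}
\item The long exact sequence of homotopy groups in degrees $\ge 1$ involves only basepoint components and consists of abelian groups. This yields exact sequences of $\FIs$-modules $\pi^\field_n\oSL\to\pi^\field_n\SL\to\pi^\field_n\iSL$. By thickness, $\pi^\field_n\SL$ is finitely generated in arity $\le\frac{n+d-3}{d-m-2}\le n\frac{d-2}{d-m-2}$.
\item One has $H^*(\SL(S);\field)\cong\operatorname{Sym}\bigl((\pi^\field_*\SL(S))^\vee\bigr)$, naturally in the $\phi^T_S$. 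The slope bound then follows from the free-algebra result, checked on the underlying $\cube$-module.
\end{enumerate}
Alternatively, you could repair your own route as the paper does for manifold links. Replace $\iSL(S)$ by its covering corresponding to $\im\bigl(\pi_1\SL(S)\to\pi_1\iSL(S)\bigr)$, so that the fibre becomes $\oSL(S)$. Then show the monodromy on $H^*(\oSL(S))$ is trivial; the H-space structure makes it act by translations.

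\textbf{Two smaller inaccuracies, both harmless.}
\begin{itemize}
\item The reduced cohomology of the base can start in degree $d-2m<d-m-2$. This does not matter: $\iSL(\underline{1})$ is connected, so the cross-effect of $H^n(\iSL)$ vanishes in arities $>n$, and slope $\le 1\le\frac{d-2}{d-m-2}$ suffices.
\item A hair does not contribute $\ge d-m-2$ on its own; a single edge joining two hairs has degree $d-1-2m$. The correct bound is $\deg G\ge(d-m-2)\lvert V^E\rvert-(d-3)$, which still gives the required slope for $n\ge1$.
\end{itemize}
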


\subsection{Existing literature}
Representation stability for ordered configuration spaces was established by Church and Farb in \cite{Church2012}, \cite{CF2013}. The $\FIs$-module reformulation were developed by Church, Ellenberg, Farb and Nagpal in \cite{CEF2015}, \cite{CEFN2014}. In \cite{HR2017} Hersh and Reiner determine sharp representation-stability ranges for Euclidean configuration spaces.

For positive-dimensional submanifolds, Wilson’s \cite{Wilson2012} showed representation stability for the cohomology of pure string motion groups, Kupers’s \cite{Kupers2020} showed homological stability for unlinked circles in $3$-manifolds, and Cantero Mor{\'a}n and Randal-Williams’s \cite{CMRW2017} showed stability for embedded surfaces as the genus increases. 

Later, Palmer \cite{Palmer2021} proves homological stability for components of moduli spaces in the case of open ambient manifolds of dimension $d \geq 2m+3$ and deduces representation stability for their ordered counterparts. Gu{\`e}s \cite{Gues2025} improves the range to $d \geq 2m+2$, obtaining stability results for both open and closed ambient manifolds. Our results give explicit $\FIs$ generation bounds for rational cohomology and homotopy of chosen components of parametrised manifold-link and string-link spaces, including cases outside the dimension ranges of the theorems for moduli spaces.

\subsection{Structure of the paper.}
In section 1, we introduce the language of functor categories, focusing on $\FI$-modules. There we mostly discuss $\FIs$-modules, also known as free $\FI$-modules. The main reason to do so is that all $\FI$-modules appearing in this paper turn out to be free. Similarly to the configuration spaces, this happens because $\R^d$ can be viewed as an interior of a compact manifold with non-empty boundary. We also introduce a new category $\cube$ modeling an infinite dimensional cubic diagram. One can think about this category as an ``untangling'' of $\FI$. It gives a technical advantage by allowing us to analyze $\FI$-modules after discarding the actions of the symmetric groups.

In section 2 we quickly review the results of embedding calculus allowing one to study $\oembc{M \times \underline{r}}$ via operad theory. After that we briefly discuss the Hairy Graph Complex construction and prove the lemmas which are important later. We finish the section by recalling the main results of \cite{FTW2020} which, in particular, describe the rational homotopy type of $\oembc{M \times \underline{r}}$ in terms of Hairy Graph Complexes.

Section 3 is devoted to the study of string links. A big part of this section is \cref{Structure maps} where we construct the pointed homotopy $\FIs$-space $\SL$. It gives a geometrical explanation as to why the sequence of homotopy groups of string links assembles into an $\FIs$-module, which could be easily seen on the level of Hairy Graph Complexes. The rest of the section is devoted to showing that rational algebraic invariants of $\SL$ satisfy representation stability. It is done by showing that the $\FIs$ space of the homotopy fibers $\oSL$ satisfies representation stability and using it to deduce the results for $\SL$.

Section 4 is devoted to manifold links. It is very similar in spirit to section 3. However, changes do appear in the later parts of the section due to the fact that the spaces of manifold links are not loop spaces.

\subsection{Acknowledgments}
First of all, I would like to thank my advisiors Najib Idrissi and Muriel Livernet for many fruitful discussions and comments. I would also like to thank Nathalie Wahl and Christine Vespa for useful discussions. This work was supported by ANR grant ANR-22-CE40-0008 SHoCoS.

\section{Functor categories}
In this section we briefly discuss functor categories shaped on various categories of finite sets. Of interest will be the following categories:
\begin{itemize}
  \item $\FB$ the category of finite subsets of $\Np$ and bijections;
  \item $\FI$ the category of finite subsets of $\Np$ and injections;
  \item $\FIs$ the category of finite subsets of $\Np$ and partial injections;
  \item $\cube$ the category of finite subsets of $\Np$ and inclusions.
\end{itemize}
When it is not important which exactly category of the above we are talking about we will use the notation $\mathcal{I}$ to refer to any of them.
\begin{remark}
  Typically, $\FB, \FI$ and $\FIs$ denotes the corresponding categories shaped on all finite sets. However, restricting our attention just to finite subsets of $\Np$ produces equivalent categories.
\end{remark}
The categories $\FB, \FI$ and $\FIs$ admit small skeletons. They are given by restricting our attention to the sets $\underline{n}=\{1,2,\dots,n\}$ for $n\in\mathbb{N}$. In particular, we set $\underline{0}=\emptyset$.

We work over $\field$. The target category $\C$ of our functors will typically be (graded, differential graded) $\field$ vector spaces or algebras. An element $V$ of the functor category $\textnormal{Fun}(\mathcal{I}, \C)$ is typically called (graded, differential graded) $\mathcal{I}$-module or -algebra. For a finite set $S$ we often write $V_S$ for $V(S)$. Similarly, we often write $V_n$ for $V(\underline{n})$. We also call $V_n$ the component of arity $n$ of the $\mathcal{I}$ module. We write $\Res^{\mathcal{J}}_{\mathcal{I}}$ for the restriction functor $\textnormal{Fun}(\mathcal{J}, \C) \to \textnormal{Fun}(\mathcal{I}, \C)$ associated to two categories $\mathcal{I} \subset \mathcal{J}$. We often write $\Res$ for $\Res^{\mathcal{J}}_{\mathcal{I}}$ when it is clear from context what categories we are talking about.

\subsection{FB-modules}
The data of an $\FB$-module $V$ in $\C$ is equivalent to a sequence of representations $V_n$ of the symmetric groups $\Sigma_n$ in $\C$.
\begin{definition}
  We say that an $\FB$-module is
  \begin{itemize}
    \item finitely generated if all $V_n$ are finitely dimensional vector spaces and only a finite number of them are non-zero;
    \item generated in arity $\leq n$ if $V_m=0$ for $m > n$;
    \item finitely generated in arity $\leq n$ if it is finitely generated and generated in arity $\leq n$.
  \end{itemize}
\end{definition}
Since the symmetric groups are isomorphic to their opposites the category $\FB$ is self-opposite. More explicitly, there is an isomorphism $\eta: \FB \to \FB^{\op}$ which sends the two sets $S$ and $T$ and a map between them $f:S\to T$ to $f^{-1}: T \to S$. By abuse of notation, we identify $\FB$ and $\FB^{\op}$ and regard $\eta$ as an involution of $\FB$. This induces an involution of $\textnormal{Fun}(\FB, \C)$ given by precomposing with $\eta$.
\begin{remark}
  Let $X$ be an $\FB$ object in a category $\C$ and let $F:\C \to \mathcal{D}$ be a contravariant functor. By abuse of notation, from now on we write $F(X)$ to denote the $\FB$ object $F(X(\eta))$ in $\mathcal{D}$.
\end{remark}

\subsection{FI-modules}
For proper introduction to $\FI$-modules consult \cite{CEF2015,CE2017}.
\begin{definition}
  The $\FI$ homology functor $\mathbf{H}: \FI\textnormal{-Mod} \to \FB\textnormal{-Mod}$ is defined on an $\FI$-module $V$ as
  \[
  \mathbf{H}(V)_T = \operatorname{coker} (\bigoplus_{S\varsubsetneq T} V_S \to V_T ).
  \]
  We say that an $\FI$-module is finitely generated (resp., generated in arity $\leq n$, finitely generated in arity $\leq n$) if the $\FB$-module $\mathbf{H}(V)$ is finitely generated (resp., generated in arity $\leq n$, finitely generated in arity $\leq n$).
\end{definition}
\begin{definition}
  We say that a finite (possibly empty) non-increasing sequence of positive integers $\lambda=(\lambda_1\geq\lambda_2\geq\dots\geq\lambda_l>0)$ is a partition of $|\lambda|:=\lambda_1+\lambda_2+\dots+\lambda_l$. For $n\geq |\lambda|+\lambda_1$ we define the padded partition
  \[
  \lambda[n]:=(n-|\lambda|,\lambda_1,\lambda_2,\dots,\lambda_l).
  \]
  We also define $V(\lambda)_n$ to be the irreducible representation of $\Sigma_n$ corresponding to the partition $\lambda[n]$.
\end{definition}
Every partition of $n$ can be written as $\lambda[n]$ for a unique partition $\lambda$. Since the irreducible representations of $\Sigma_n$ over $\field$ are classified by the partitions of $n$, any irreducible representation of $\Sigma_n$ is isomorphic to $V(\lambda)_n$ for some $\lambda$.

One of the reasons to introduce $\FI$-modules in \cite{CEF2015} was to study the consistent sequences of \cite{CF2013}.
\begin{definition}
  A consistent sequence is a sequence $\{V_n\}_{n\in\mathbb{N}}$ of $\Sigma_n$ representations together with linear maps $\phi_n:V_n\to V_{n+1}$. The maps are required to be equivariant in the sense that the following square commutes
  \[
  \begin{tikzcd}
    V_n \arrow{r}{\phi_n} \arrow[swap]{d}{g} & V_{n+1} \arrow{d}{g\times id} \\
    V_n \arrow{r}{\phi_n} & V_{n+1}
  \end{tikzcd}
  \]
  for every $g\in\Sigma_n$.
\end{definition}
\begin{lemma}[{{\cite[Remark 3.3.1]{CEF2015}}}]
  Every $\FI$-module $V$ gives rise to a consistent sequence by setting $V_n := V(\underline{n})$ and $\phi_n := V(\underline{n}\hookrightarrow\underline{n+1})$. A consistent sequence $\{V_n\}$ can be promoted to an $\FI$-module if and only if the transposition $(n+1 \; n+2)\in\Sigma_{n+2}$ acts trivially on $\im(\phi_{n+1}\phi_n)$ for every $n\in\mathbb{N}$.
\end{lemma}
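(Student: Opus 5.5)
The first assertion is formal. Given an $\FI$-module $V$, put $V_n := V(\underline{n})$ and $\phi_n := V(\iota_n)$, where $\iota_n\colon \underline{n}\hookrightarrow\underline{n+1}$ is the standard inclusion. For $g\in\Sigma_n$ we have $\iota_n\circ g = (g\times \mathrm{id})\circ\iota_n$ as injections $\underline{n}\to\underline{n+1}$, and functoriality gives the required commuting square. For the necessity part of the criterion, consider the composite $\iota_{n+1}\iota_n\colon\underline{n}\hookrightarrow\underline{n+2}$. The transposition $(n+1\;n+2)$ fixes the image $\underline{n}$ pointwise, so $(n+1\;n+2)\circ\iota_{n+1}\iota_n=\iota_{n+1}\iota_n$. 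Applying $V$ shows that the transposition acts trivially on $\im(\phi_{n+1}\phi_n)$.

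For sufficiency, I will work on the skeleton of $\FI$ with objects $\underline{n}$; this is enough, since $\FI$ is equivalent to it. Every injection $f\colon\underline{n}\to\underline{m}$ factors as $f=\sigma\circ\iota_{m-1}\cdots\iota_n$ for some $\sigma\in\Sigma_m$. Define
\[
V(f):=\sigma\cdot\phi_{m-1}\cdots\phi_n .
\]
The main step, and the only real obstacle, is showing that this is independent of the choice of $\sigma$. Two choices differ by right multiplication by an element of the pointwise stabiliser of $\underline{n}$ in $\Sigma_m$, namely a copy of $\Sigma_{m-n}$ acting on $\{n+1,\dots,m\}$. So it suffices to show that this $\Sigma_{m-n}$ acts trivially on $\im(\phi_{m-1}\cdots\phi_n)$.

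I will prove this by induction on $m-n$, using that $\Sigma_{m-n}$ is generated by the adjacent transpositions $(k\;k+1)$ with $n<k<m$. Fix such a $k$ and write
\[
\phi_{m-1}\cdots\phi_n=(\phi_{m-1}\cdots\phi_{k+1})\circ(\phi_k\phi_{k-1})\circ(\phi_{k-2}\cdots\phi_n).
\]
By the hypothesis applied with $n=k-1$, the transposition $(k\;k+1)$ acts trivially on $\im(\phi_k\phi_{k-1})$. The equivariance square lets us move $(k\;k+1)$, viewed as an element of $\Sigma_{k+1}$, past the later maps $\phi_{m-1},\dots,\phi_{k+1}$, where it becomes $(k\;k+1)\times\mathrm{id}$. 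Combining these two facts, $(k\;k+1)$ acts trivially on the full image. Only adjacent transpositions are needed, so the induction is really just this commutation argument.

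Functoriality then follows. If $g\colon\underline{m}\to\underline{p}$ factors as $g=\tau\circ\iota_{p-1}\cdots\iota_m$, equivariance gives
\[
\phi_{p-1}\cdots\phi_m\circ\sigma=(\sigma\times\mathrm{id})\circ\phi_{p-1}\cdots\phi_m .
\]
Hence $V(g)V(f)=\tau(\sigma\times\mathrm{id})\,\phi_{p-1}\cdots\phi_n$, and $\tau(\sigma\times\mathrm{id})$ is a valid choice of permutation for the composite $g f$. Identities are sent to identities because we may take $\sigma=\mathrm{id}$. Finally, the resulting $\FI$-module recovers the original consistent sequence, since its value on $\iota_n$ is $\phi_n$.
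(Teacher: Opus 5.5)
Your proof is correct. Necessity is fine, and so is the construction $V(f)=\sigma\cdot\phi_{m-1}\cdots\phi_n$. Well-definedness does reduce, via the consistency squares, to the adjacent transpositions $(k\;k+1)$ with $n<k<m$ acting trivially on the image of $V_n$ in $V_m$, so no induction is actually needed, as you note yourself. The paper gives no proof of its own here and only cites \cite[Remark 3.3.1]{CEF2015}; your argument is the natural way to justify that remark.
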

\begin{definition}
  A consistent sequence $\{(V_n,\phi_n)\}_{n\in\mathbb{N}}$ is representation stable with stable range $N\in\mathbb{N}$ if, for all $n\geq N$, the following holds.
  \begin{itemize}
    \item The map $\phi_n:V_n\to V_{n+1}$ is injective for $n\geq N$.
    \item The span $\langle\sigma\phi_n(V_n)|\sigma\in\Sigma_{n+1}\rangle$ is equal to $V_{n+1}$ for $n\geq N$.
    \item Decompose $V_n$ into irreducible representations as
    \[
    V_n = \bigoplus_\lambda c_{\lambda,n} V(\lambda)_n
    \]
    with multiplicities $0\leq c_{\lambda,n}\leq \infty$. For each $\lambda$ the multiplicities $c_{\lambda, n}$ are independent of $n$ for $n\geq N$.
  \end{itemize}
\end{definition}
\begin{proposition}[{{\cite[Theorem 1.13]{CEF2015}}}]
  An $\FI$-module $V$ is finitely generated if and only if the underlying consistent sequence is representation stable and all $V_n$ are finite dimensional.
\end{proposition}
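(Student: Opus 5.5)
The statement is Theorem 1.13 of Church–Ellenberg–Farb, which the paper only quotes. I would reprove it by reducing both directions to the structure theory of finitely generated $\FI$-modules over $\field$. The two main inputs are the Noetherian property of $\FI$-modules and the polynomiality of characters.

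\emph{Forward direction.} Assume $V$ is finitely generated. Choose finitely many generators lying in arities $\leq k$. Then $V$ is a quotient of a finite sum of free modules $M(W)$, where $M(W)_n=\Ind_{\Sigma_k\times\Sigma_{n-k}}^{\Sigma_n} W\boxtimes \field$.

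First, each $V_n$ is finite dimensional, since $\dim M(W)_n=\binom{n}{k}\dim W$.

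Second, surjectivity (the span condition) holds for $n\geq k$. The image of $\Hom_{\FI}(\underline{k},\underline{n+1})$ factors through $\underline{n}\hookrightarrow\underline{n+1}$ up to the action of $\Sigma_{n+1}$.

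Third, by Pieri's rule, $M(W)_n$ decomposes into $V(\lambda)_n$'s with multiplicities independent of $n$ for $n\geq 2k$.

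To get the multiplicity and injectivity conditions for $V$ itself rather than for the free cover, I would use Noetherianity over $\field$. The kernel of the map $\bigoplus M(W)\to V$ is again finitely generated, say in arity $\leq r$. So $V$ has a finite presentation by free modules, and the multiplicities $c_{\lambda,n}$ are differences of eventually constant functions, hence eventually constant.

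Injectivity of $\phi_n$ for large $n$ follows as well. The torsion submodule (elements killed by some structure map) is a finitely generated submodule supported in finitely many arities. Hence it vanishes beyond some $N$, and then $\phi_n$ is injective for $n\geq N$.

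\emph{Reverse direction.} Assume the sequence is representation stable with stable range $N$ and each $V_n$ is finite dimensional. Surjectivity for $n\geq N$ shows that $V$ is generated by $V_0\oplus\dots\oplus V_N$. Each of these is finite dimensional, so $V$ is finitely generated. Equivalently, $\mathbf{H}(V)_n=0$ for $n>N$. Only this weak form of the span condition is needed here.

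\emph{Main obstacle.} The hard step is the Noetherian property in the forward direction. I would prove it as CEF do: a submodule of $M(W)$ is controlled by the "shift" functor and a degree argument, or alternatively via Snowden's twisted commutative algebra viewpoint. Everything else is bookkeeping with Pieri's rule.
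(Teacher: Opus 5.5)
The paper does not prove this statement; it quotes \cite[Theorem 1.13]{CEF2015}, so I compare your attempt with the argument there. Your reverse direction is correct and is the standard one: the span condition alone shows that $V$ is generated by $V_0,\dots,V_N$. In the forward direction, finite dimensionality, the span condition for $n\geq k$, and injectivity via the torsion submodule are also fine.

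\textbf{The gap is the multiplicity condition.} A presentation $F_1\to F_0\to V\to 0$ only gives $c_{\lambda,n}(V)=c_{\lambda,n}(F_0)-c_{\lambda,n}(K)$ with $K=\ker(F_0\to V)$. Since $K$ is merely a quotient of $F_1$, Pieri's rule says nothing about its multiplicities. Their eventual constancy is exactly the statement you are proving, now for the finitely generated module $K$, so ``differences of eventually constant functions'' is circular. Iterating does not rescue it, because finitely generated $\FI$-modules over $\field$ can have infinite projective dimension. For example, for $\field$ concentrated in arity $0$ the minimal free resolution has $i$-th term $M(\mathrm{sgn}_i)$, generated in arity $i$. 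In arity $n$ you would then get an alternating sum whose number of terms grows with $n$, which gives no eventual constancy. So Noetherianity is not the only substantial input, and what remains is not Pieri bookkeeping.

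The missing idea, as in \cite{CEF2015}, is an exact invariant with comparison maps between consecutive arities: the coinvariants $\Phi_a(V)_n=(V_{a+n})_{\Sigma_n}$. These are $\Sigma_a$-representations, with maps induced by $\underline{a+n}\hookrightarrow\underline{a+n+1}$. Over $\field$ these functors are exact, so $0\to\Phi_a(K)\to\Phi_a(F_0)\to\Phi_a(V)\to 0$ is exact. Then:
\begin{itemize}
  \item For $F_0$, the maps are injective, and bijective for $n\geq k$, by a direct orbit count.
  \item For $K$, the maps are injective because $\Phi_a(K)\subset\Phi_a(F_0)$. They are surjective for $n\geq r$ because $K$ is generated in arity $\leq r$; this is where Noetherianity enters.
  \item The snake lemma then makes the maps for $\Phi_a(V)$ isomorphisms for $n\geq\max(k,r)$, uniformly in $a$.
\end{itemize}
Finally, $V$ has weight $\leq k$: only $V(\lambda)_n$ with $|\lambda|\leq k$ occur, as for $F_0$. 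By the branching rule, for $n$ large the multiplicity of the $\Sigma_{|\lambda|}$-irreducible indexed by $\lambda$ in $\Phi_{|\lambda|}(V)_n$ is $\sum c_{\lambda',|\lambda|+n}$, summed over $\lambda'\subset\lambda$ with $\lambda/\lambda'$ a horizontal strip. This system is unitriangular, so it yields constancy of every $c_{\lambda,n}$ beyond a range independent of $\lambda$.

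Character polynomials via Nagpal's shift theorem, or Sam--Snowden's finite injective resolutions, would also close the gap. Some such additional input is needed, though.
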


\subsection{$\FIs$-modules}
Recall that $\FIs$ is the category of finite subsets of $\Np$ and partial injections. A partial injection is a map $f:S \to T$ defined only on a subset $\operatorname{dom} f \subset S$ on which it is injective. There are three special types of partial injections
\begin{itemize}
  \item subset inclusions $\phi^{T}_{S}: S \hookrightarrow T$ for $S \subset T$;
  \item projections on a subset $\psi^{T}_{S}: T \dashrightarrow S$ for $S \subset T$;
  \item bijections $\sigma: S \xrightarrow{\sim} S'$.
\end{itemize}
Every partial injection $f: S \to T$ can be decomposed as the following composition
\[
\begin{tikzcd}
  S \arrow[r, "f"] \arrow[d, dashed] & T \\
  \operatorname{dom} f \arrow[r, "\sim"] & \im f \arrow[u, hook]
\end{tikzcd}
\]
where the bottom map is induced from $f$.

Similarly to $\FB$, the category $\FIs$ is self-opposite via the map $\eta$ sending a partial injection $f:S \to T$ to its ``inverse'' $\eta(f) : T \to S$, defined by:
\[
\eta(f)(t) =
\begin{cases}
  s, & \text{if } t = f(s) \text{ for some } s \in S, \\
  \text{undefined}, & t \notin \textnormal{im}(f).
\end{cases}
\]
By abuse of notation, we identify $\FIs$ and $(\FIs)^{\op}$ and regard $\eta$ as an involution of $\FIs$. Hence, precomposing with $\eta$ defines a natural involution of $\FIs$-modules.
\begin{remark}
  Let $X$ be an $\FIs$ object in a category $\C$ and let $F:\C \to \mathcal{D}$ be a contravariant functor. By abuse of notation, from now on we write $F(X)$ to denote the $\FIs$ object $F(X(\eta))$ in $\mathcal{D}$.
\end{remark}

\begin{definition}
  We define two different functors $\crosseffect, \; \mathbf{H}: \FIs\textnormal{-Mod} \to \FB\textnormal{-Mod}$ by setting
  \begin{gather*}
    \crosseffect(V)_T = \operatorname{ker}(V_T \to \bigoplus_{S\varsubsetneq T}V_S), \\
    \mathbf{H}(V)_T = \operatorname{coker}(\bigoplus_{S\varsubsetneq T}V_S \to V_T).
  \end{gather*}
  There is an obvious natural transformation $\nu:\crosseffect \Rightarrow \mathbf{H}$ given on object $T$ by composition $\crosseffect(V)_T \to V_{T} \to \mathbf{H}(V)_T$.
\end{definition}
\begin{proposition}\label{injective and projective homology}
  The natural transformation $\nu:\crosseffect \Rightarrow \mathbf{H}$ is an isomorphism.
\end{proposition}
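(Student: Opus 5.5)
The plan is to use the idempotents of $V_T$ coming from the projections and inclusions. For $S \subset T$ put $e_S := V(\phi^T_S)\circ V(\psi^T_S) \in \operatorname{End}(V_T)$. This is the map induced by the partial identity $T \dashrightarrow T$ defined on $S$. Composition of partial injections gives $e_S e_{S'} = e_{S\cap S'}$, so the $e_S$ are commuting idempotents. We also have $e_T = \mathrm{id}$. I would then form the Möbius-type operator
\[
p := \sum_{S \subset T} (-1)^{|T\setminus S|} e_S = \prod_{t\in T}\bigl(\mathrm{id} - e_{T\setminus\{t\}}\bigr),
\]
where the product formula follows from $e_S e_{S'} = e_{S\cap S'}$. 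Thus $p$ is an idempotent. Its complement $\mathrm{id}-p$ has image contained in $\sum_{S\subsetneq T} \im e_S$, since every other term of the sum factors through some $V_S$ with $S\subsetneq T$.

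\textbf{Step 1: identify $\im p$ with $\crosseffect(V)_T$.} The map $V_T\to\bigoplus_{S\subsetneq T}V_S$ has components $V(\psi^T_S)$. Each proper $S$ lies in some $T\setminus\{t\}$, and $\psi^T_S$ factors through $\psi^T_{T\setminus\{t\}}$. So $x\in\crosseffect(V)_T$ if and only if $V(\psi^T_{T\setminus\{t\}})x=0$ for all $t$. Since $V(\phi)$ is injective (because $\psi\phi=\mathrm{id}$), this is equivalent to $e_{T\setminus\{t\}}x=0$ for all $t$. If $x$ satisfies this, then $px=x$ by the product formula. Conversely, $e_{T\setminus\{t\}}p=0$ because each factor $e_{T\setminus\{t\}}(\mathrm{id}-e_{T\setminus\{t\}})$ vanishes. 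Hence $\crosseffect(V)_T=\im p$.

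\textbf{Step 2: identify $\ker p$ with the image $I:=\im(\bigoplus_{S\subsetneq T}V_S\to V_T)$.} The image of $V_S$ under $V(\phi^T_S)$ equals $\im e_S$, because $V(\psi^T_S)$ is surjective, being split by $V(\phi^T_S)$. So $I=\sum_{S\subsetneq T}\im e_S$. We have $p e_S = 0$ for proper $S$: pick $t\notin S$, and then the factor $(\mathrm{id}-e_{T\setminus\{t\}})e_S = e_S - e_S = 0$. Hence $I\subset\ker p$. Conversely, $\ker p = \im(\mathrm{id}-p)\subset I$ by the remark above. So $V_T = \im p \oplus I$.

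\textbf{Conclusion.} The composite $\crosseffect(V)_T=\im p \to V_T \to V_T/I$ is therefore an isomorphism, which is exactly $\nu_T$. Naturality in $T$ for bijections is automatic, since $\nu$ is already a natural transformation. Its componentwise bijectivity gives an isomorphism of $\FB$-modules. The only delicate point I expect is the bookkeeping of compositions of partial injections, namely verifying $e_Se_{S'}=e_{S\cap S'}$ and that $\psi^T_S\phi^T_S=\mathrm{id}_S$, with $V$ covariant. Everything else is formal.
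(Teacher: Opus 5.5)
Your proof is correct and is essentially the paper's argument: your $e_S$ is $V(\operatorname{pr}_S)$, your $p$ is exactly the paper's operator $\operatorname{tl}$, and your Steps 1 and 2 match parts 2 and 3 of \cref{lem:tl}. The only difference is cosmetic: you get $e_{T\setminus\{t\}}p=0$ and $pe_S=0$ from the factorization $p=\prod_{t\in T}(\mathrm{id}-e_{T\setminus\{t\}})$, whereas the paper checks $\operatorname{tl}(x)_{T\setminus\{t\}}=0$ by directly reindexing the alternating sum.
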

In order to prove the Proposition we introduce the following technical tool. For $S\subset T$ let $\textnormal{pr}_S \in \Hom_{\FIs}(T,T)$ given by $\operatorname{pr}_S(s) = s$ for all $s \in S$ (and undefined outside $S$). For an $\FIs$-module $V$ and $x\in V_T$ we write $x_S$ to denote $V(\textnormal{pr}_S)(x)$. We set
\[
\operatorname{tl} : V_T \to V_T, \qquad \operatorname{tl}(x) = \sum_{i=0}^{|T|} \sum_{\substack{S\subset T \\ |S| = i}} (-1)^{|T|-i} x_S.
\]
\begin{lemma}\label{lem:tl}
  Let T be a finite set, $V$ be an $\FIs$-module and $x \in V_T$. The following holds
  \begin{enumerate}
    \item $(x_S)_{S'} = x_{S\cap S'} = (x_{S'})_S$ for any $S, S'\subset T$.
    \item $\operatorname{tl}(x) \in \crosseffect(V)_T$ for any $x \in V_T$.
    Furthermore, if $x\in \crosseffect(V)_T$ then $x=\operatorname{tl}(x)$.
    \item $[x]=[\operatorname{tl}(x)]$ in $\mathbf{H}(V)_T$.
    Furthermore, $\textnormal{tl}(x) = 0$ for any $x \in \textnormal{im}\bigl(\bigoplus_{S\varsubsetneq T}V_S \to V_T\bigr)$.
  \end{enumerate}
\end{lemma}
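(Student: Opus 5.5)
The plan is to prove the three items in order, each by direct computation from the composition law in $\FIs$. For (1), note that $\operatorname{pr}_{S'} \circ \operatorname{pr}_S$ is the partial identity of $T$ defined exactly on $S \cap S'$, so it equals $\operatorname{pr}_{S\cap S'}$. The same holds in the other order. Applying the functor $V$ then gives $(x_S)_{S'} = V(\operatorname{pr}_{S'})V(\operatorname{pr}_S)(x) = x_{S\cap S'}$, and symmetrically $(x_{S'})_S = x_{S \cap S'}$.

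For (2), the first step is to identify the map $V_T \to V_{S'}$ for $S' \subsetneq T$ in the definition of $\crosseffect$ with the projection $V(\psi^T_{S'})$. It then suffices to show $\operatorname{tl}(x)_{S'} = 0$ for every proper $S' \subsetneq T$. This reduction is legitimate because $\operatorname{pr}_{S'} = \phi^T_{S'}\circ\psi^T_{S'}$ and $\phi^T_{S'}$ is injective on modules: $\psi^T_{S'} \circ \phi^T_{S'} = \mathrm{id}_{S'}$. By (1),
\[
\operatorname{tl}(x)_{S'} = \sum_{S \subset T} (-1)^{|T|-|S|} x_{S \cap S'}.
\]
Fix $U \subset S'$ and group the terms with $S\cap S' = U$; these are $S = U \sqcup W$ with $W \subset T\setminus S'$. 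The coefficient of $x_U$ is then $\pm\sum_{W \subset T \setminus S'} (-1)^{|W|}$, which vanishes because $T \setminus S' \neq \emptyset$.

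For the converse in (2), suppose $x \in \crosseffect(V)_T$. Then $x_S = 0$ for every proper $S$, since $x_S$ is the image of $V(\psi^T_S)(x) = 0$ under $V(\phi^T_S)$. Hence only the term $S = T$ survives, and since $x_T = x$ we get $\operatorname{tl}(x) = x$.

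For (3), the first claim is immediate: $x - \operatorname{tl}(x)$ is a sum of terms $\pm x_S$ with $S \subsetneq T$. Each such $x_S = V(\phi^T_S)(V(\psi^T_S)x)$ lies in the image of $V_S$, so $[x] = [\operatorname{tl}(x)]$ in $\mathbf{H}(V)_T$.

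For the vanishing statement, $\operatorname{tl}$ is linear, so it suffices to treat $x = V(\phi^T_{S'})(y)$ with $y \in V_{S'}$ and $S' \subsetneq T$. Then $x_{S'} = V(\phi^T_{S'}\psi^T_{S'}\phi^T_{S'})(y) = x$. This gives
\[
\operatorname{tl}(x) = \sum_S (-1)^{|T|-|S|} x_{S\cap S'}.
\]
This is exactly the expression shown to vanish in (2), since there $x$ was arbitrary.

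The main obstacle is only bookkeeping. One must check that the maps $V_T \to V_S$ in the definition of $\crosseffect$ are the projections $\psi$ and that the maps $V_S \to V_T$ in $\mathbf{H}$ are the inclusions $\phi$. One must also verify the identities $\psi^T_S\phi^T_S = \mathrm{id}$ and $\phi^T_S\psi^T_S = \operatorname{pr}_S$ in $\FIs$. Everything else is the inclusion–exclusion identity $\sum_{W\subset A}(-1)^{|W|} = 0$ for nonempty $A$.
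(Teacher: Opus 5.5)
Your proof is correct and follows the paper's argument. You use the same identity $\operatorname{pr}_S\circ\operatorname{pr}_{S'}=\operatorname{pr}_{S\cap S'}$, the same left-inverse argument to identify $\ker V(\psi^T_{S'})$ with $\ker V(\operatorname{pr}_{S'})$, and the same reduction $x=x_{S'}$ in (3). The only difference is in (2): you kill $\operatorname{tl}(x)_{S'}$ for every proper $S'$ at once by grouping the terms with $S\cap S'=U$. The paper instead treats only $S'=T\setminus\{t\}$, pairing each $S$ with $S\cup\{t\}$, and then uses that every projection onto a proper subset factors through one of these.
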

\begin{proof}
  \textbf{1.} Note that, as partial injections, $\textnormal{pr}_S\circ \textnormal{pr}_{S'} = \textnormal{pr}_{S\cap S'} = \textnormal{pr}_{S'}\circ \textnormal{pr}_S$.

  \textbf{2.} Since the subset inclusion $T\setminus\{t\} \hookrightarrow T$ admits a left inverse in $\FIs$
  we have that $\text{ker}\bigl(V_T\to V_{T\setminus\{t\}}\bigr) = \text{ker}\bigl(V(\textnormal{pr}_{T\setminus\{t\}})\bigr)$. Observe that for $x \in V_T$
  \begin{align*}
    \operatorname{tl}(x)_{T\setminus\{t\}} &= \sum_{i=0}^{|T|} \sum_{\substack{S\subset T \\ |S| = i}} (-1)^{|T|-i} x_{S\cap T\setminus\{t\}}  \\
    &=
    \sum_{i=0}^{|T|-1} \sum_{\substack{S\subset T\setminus\{t\} \\ |S| = i}} (-1)^{|T|-i} x_{S} +
    \sum_{i=1}^{|T|} \sum_{\substack{t\in S\subset T \\ |S| = i}} (-1)^{|T|-i} x_{S\setminus\{t\}} \\
    &=
    \sum_{i=0}^{|T|-1} \sum_{\substack{S\subset T\setminus\{t\} \\ |S| = i}} (-1)^{|T|-i} x_{S} +
    \sum_{i=0}^{|T|-1} \sum_{\substack{S\subset T\setminus\{t\} \\ |S| = i}} (-1)^{|T|-i-1} x_{S} = 0.
  \end{align*}
  For any $S\varsubsetneq T$, the retraction $T \twoheadrightarrow S$ factors through a retraction $T \twoheadrightarrow T \setminus \{t\}$ for some $t \in T$. Therefore, $\text{tl}(x) \in \text{ker}\bigl(V_T \to V_S\bigr)$ for all $S \varsubsetneq T$. Suppose $x \in \crosseffect(V)_T$ that is $x \in \text{ker}\bigl(V_T \to \bigoplus_{S\varsubsetneq T}V_S\bigr)$. Then $x_S = 0$ for any $S\varsubsetneq T$ since $\textnormal{pr}_S$ factors through the retraction $T\twoheadrightarrow S$. Hence, $\text{tl}(x) = x_T$ and by definition $x_T = x$.

  \textbf{3.} The retraction $T \twoheadrightarrow S$ admits a right inverse in $\FIs$ so $\textnormal{im}\bigl(V_S \to V_T\bigr) = \textnormal{im}(V(\textnormal{pr}_S))$.
  Hence, $[x_S] = 0$ in $\mathbf{H}(V)_T$ for $S\varsubsetneq T$ and $[\text{tl}(x)]=[x_T]=[x]$. For the second part, since $\textnormal{im}\bigl(\bigoplus_{S\varsubsetneq T}V_S \to V_T\bigr) = \sum_{S\varsubsetneq T} \textnormal{im}\bigl(V_S \to V_T\bigr)$ it is enough to prove that $\textnormal{tl}(x) = 0$ for any $x \in \textnormal{im}\bigl(V_S \to V_T\bigr)$ and for any $S \varsubsetneq T$. By 1. $\textnormal{tl}(x_S) = \textnormal{tl}(x)_S$ and in 2. we showed that the latter is equal to $0$ for $S \varsubsetneq T$.
\end{proof}
\begin{proof}[Proof of Proposition \ref{injective and projective homology}]
  The result follows from 2. and 3. of \cref{lem:tl}.
\end{proof}
\begin{corollary}\label{Exactness of functor homology}
  The functors $\crosseffect$ and $\mathbf{H}$ are exact.
\end{corollary}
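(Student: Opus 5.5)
Because $\nu$ is an isomorphism (\cref{injective and projective homology}), $\crosseffect \cong \mathbf{H}$, so it suffices to prove the following two facts:
\begin{itemize}
  \item $\crosseffect$ is left exact, since it is defined by a kernel;
  \item $\mathbf{H}$ is right exact, since it is defined by a cokernel.
\end{itemize}
A functor isomorphic to both a left exact and a right exact functor is exact. All constructions are computed objectwise in $T$, and exactness of a sequence of $\FB$-modules is checked arity by arity. So we may fix a finite set $T$ and work with vector spaces (or graded, dg vector spaces, which are again abelian with objectwise kernels and cokernels).

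For the first fact, let $0 \to U \to V \to W \to 0$ be a short exact sequence of $\FIs$-modules. Consider the map of short exact sequences
\[
0\to U_T\to V_T\to W_T\to 0
\]
to
\[
0\to \bigoplus_{S\varsubsetneq T}U_S \to \bigoplus_{S\varsubsetneq T}V_S \to \bigoplus_{S\varsubsetneq T}W_S\to 0,
\]
where the second sequence is exact because direct sums of exact sequences are exact. The vertical maps are the structure maps induced by the projections $\psi^T_S$, and naturality gives commutativity. The snake lemma then yields the exact sequence
\[
0\to\crosseffect(U)_T\to\crosseffect(V)_T\to\crosseffect(W)_T\to \operatorname{coker}(U_T\to \textstyle\bigoplus U_S)\to\cdots
\]
This gives left exactness of $\crosseffect$. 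The same snake sequence, continued, gives right exactness of $\mathbf{H}$ once the maps $\bigoplus_S V_S\to V_T$ induced by the inclusions $\phi^T_S$ are used as the vertical maps instead.

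Combining the two, $0\to \crosseffect(U)\to\crosseffect(V)\to\crosseffect(W)$ is exact. Transporting $\mathbf{H}(U)\to\mathbf{H}(V)\to\mathbf{H}(W)\to 0$ along the natural isomorphism $\nu$, naturality of $\nu$ shows that $\crosseffect(U)\to\crosseffect(V)\to\crosseffect(W)\to0$ is exact as well. Together these say that $\crosseffect$ preserves short exact sequences, hence so does $\mathbf{H}\cong\crosseffect$.

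The only point needing care is that $\nu$ is natural in the module variable, so that it intertwines the maps induced by $V\to W$. This is immediate because $\nu$ is a composite of the inclusion of a kernel and the projection to a cokernel, both natural. I expect no real obstacle.

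Alternatively, one can argue directly with \cref{lem:tl}. The idempotent $\operatorname{tl}$ is natural in $V$, and it splits $V_T$ as $\crosseffect(V)_T\oplus \ker\operatorname{tl}$, with $\ker\operatorname{tl}$ equal to the image of $\bigoplus_{S\varsubsetneq T} V_S$. So $\crosseffect(V)_T$ is the image of a natural idempotent, and taking images of natural idempotents is exact, since it is a natural direct summand of an exact functor.
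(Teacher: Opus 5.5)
Your proof is correct and follows the paper's own argument. $\crosseffect$ is left exact because it is a kernel, $\mathbf{H}$ is right exact because it is a cokernel, and the natural isomorphism $\nu$ of \cref{injective and projective homology} transfers each half-exactness to the other functor; your snake-lemma details, your naturality check on $\nu$, and your alternative via the natural idempotent $\operatorname{tl}$ (where $\ker\operatorname{tl}$ is the image of $\bigoplus_{S\varsubsetneq T}V_S$, since each $x_S$ lies in that image) are all valid elaborations of it.
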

\begin{proof}
  Follows from the fact that $\crosseffect$ is left exact, $\mathbf{H}$ is right exact and \cref{injective and projective homology}
\end{proof}

\begin{definition}
  We say that an $\FIs$-module $V$ is finitely generated (resp., generated in arity $\leq n$, finitely generated in arity $\leq n$) if one of the following equivalent conditions hold:
  \begin{itemize}
    \item the $\FB$-module $\crosseffect(V)$ is finitely generated (resp., generated in arity $\leq n$, finitely generated in arity $\leq n$);
    \item the $\FB$-module $\mathbf{H}(V)$ is finitely generated (resp., generated in arity $\leq n$, finitely generated in arity $\leq n$);
    \item the underlying $\FI$-module $\Res(V)$ is finitely generated (resp., generated in arity $\leq n$, finitely generated in arity $\leq n$).
  \end{itemize}
\end{definition}
\begin{lemma}\label{lem:thick}
  The full subcategory $\mathcal{I}$ of finitely generated (generated in arity $\leq n$, finitely generated in arity $\leq n$) $\FIs$-modules is thick, i.e. for every exact sequence
  \[
  0 \to U \to V \to W \to 0
  \]
  $V$ is an object in $\mathcal{I}$ if and only if $U$ and $W$ are objects in $\mathcal{I}$.
\end{lemma}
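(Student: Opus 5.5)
The plan is to reduce everything to the $\FB$-module $\mathbf{H}(V)$ and use exactness of $\mathbf{H}$ from \cref{Exactness of functor homology}. Given a short exact sequence $0 \to U \to V \to W \to 0$ of $\FIs$-modules, \cref{Exactness of functor homology} yields, arity by arity, a short exact sequence of $\FB$-modules
\[
0 \to \mathbf{H}(U) \to \mathbf{H}(V) \to \mathbf{H}(W) \to 0,
\]
that is, for every finite set $T$ a short exact sequence of $\field$-vector spaces $0 \to \mathbf{H}(U)_T \to \mathbf{H}(V)_T \to \mathbf{H}(W)_T \to 0$. By definition, each of the three properties of an $\FIs$-module is the corresponding property of its $\mathbf{H}$, so it suffices to prove that the full subcategory of $\FB$-modules that are finitely generated (resp.\ generated in arity $\leq n$, finitely generated in arity $\leq n$) is thick.

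This reduces to linear algebra in each arity. For a short exact sequence of vector spaces $0 \to A \to B \to C \to 0$ we have $\dim B = \dim A + \dim C$ (in the sense of cardinals), and $B = 0$ if and only if $A = 0$ and $C = 0$. First I will treat the property ``generated in arity $\leq n$''. It says $\mathbf{H}(\cdot)_m = 0$ for all $m > n$, and the vanishing criterion above, applied arity by arity, shows it holds for $\mathbf{H}(V)$ exactly when it holds for both $\mathbf{H}(U)$ and $\mathbf{H}(W)$. Next I will treat ``finitely generated'', meaning every arity is finite dimensional and only finitely many arities are nonzero. The dimension equality shows $\mathbf{H}(V)_m$ is finite dimensional if and only if $\mathbf{H}(U)_m$ and $\mathbf{H}(W)_m$ are. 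Also, the set of nonzero arities of $\mathbf{H}(V)$ is the union of those of $\mathbf{H}(U)$ and $\mathbf{H}(W)$, so it is finite if and only if both of the others are. The third property, ``finitely generated in arity $\leq n$'', is the conjunction of the first two, so it follows immediately.

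I do not expect a real obstacle here. The only substantive input is the exactness of $\mathbf{H}$ (equivalently of $\crosseffect$), which is already established in \cref{Exactness of functor homology} via \cref{injective and projective homology}. It is this exactness that makes the $\FIs$ situation simpler than the general $\FI$ case, where thickness would need Noetherianity.
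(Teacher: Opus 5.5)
Your argument is correct and is the one the paper leaves implicit: the lemma is stated without proof immediately after \cref{Exactness of functor homology}, and exactness of $\mathbf{H}$ turns a short exact sequence of $\FIs$-modules into one of $\FB$-modules, after which thickness is arity-wise linear algebra. One small correction to your closing aside only: for plain $\FI$-modules the ``generated in arity $\leq n$'' version fails outright rather than needing Noetherianity, since the sum-zero submodule of the $\FI$-module $T \mapsto \field^{T}$ needs a generator in arity $2$ even though $\field^{T}$ is generated in arity $1$.
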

\begin{definition}
  Let $V$ be an $\FB$-module or an $\FIs$-module. We define its dual $\FIs$-module $V^\vee$ as
  \[
  V^\vee = \Hom(-, \field) \circ V \circ \eta.
  \]
  Explicitly, for a partial injection $f$ we have
  \[
  V^\vee(f) = \Hom(V(\eta(f)), \field).
  \]
\end{definition}
\begin{proposition}\label{prop:properties_dual}
  An $\FIs$-module $V$ is finitely generated (resp., generated in arity $\leq n$) if and only if $V^\vee$ is finitely generated (resp., generated in arity $\leq n$).
\end{proposition}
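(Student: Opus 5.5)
The plan is to compute $\crosseffect(V^\vee)$ and $\mathbf{H}(V^\vee)$ in terms of $V$, and then use \cref{injective and projective homology} to transfer the generation condition. The claimed identification is
\[
\crosseffect(V^\vee)\cong \mathbf{H}(V)^\vee
\]
as $\FB$-modules, where the dual of an $\FB$-module is taken with the involution $\eta$ on $\FB$.

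To establish it, fix a finite set $T$. By definition,
\[
\crosseffect(V^\vee)_T=\ker\Bigl(V_T^*\to \bigoplus_{S\varsubsetneq T}V_S^*\Bigr),
\]
and the component maps are $V^\vee(\psi^T_S)=\Hom(V(\eta(\psi^T_S)),\field)$. Here $\eta(\psi^T_S)=\phi^T_S$ is the inclusion $S\hookrightarrow T$, so each component is the transpose of $V(\phi^T_S)\colon V_S\to V_T$. Dualising turns cokernels into kernels even when $V_T$ is infinite dimensional, since $\Hom(-,\field)$ is exact over a field. So the kernel above is the dual of
\[
\operatorname{coker}\Bigl(\bigoplus_{S\varsubsetneq T}V_S\to V_T\Bigr)=\mathbf{H}(V)_T .
\]
This identification is natural in bijections, with the $\Sigma_T$-action twisted by $\eta$, which is exactly the dual $\FB$-module structure. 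Symmetrically, $\mathbf{H}(V^\vee)_T$ is the cokernel of the transposes of the projections $V(\psi^T_S)\colon V_T\to V_S$. When everything is finite dimensional this is $\crosseffect(V)_T^*$, but in general the cokernel of a dual map is only a subspace of the dual of the kernel. I therefore use the first identification, which holds without any finiteness assumptions.

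With this in hand the rest is formal. By \cref{injective and projective homology}, $\crosseffect(V^\vee)\cong\mathbf{H}(V^\vee)$, so $V^\vee$ is generated in arity $\leq n$ if and only if $\crosseffect(V^\vee)_T=\mathbf{H}(V)_T^*$ vanishes for $|T|>n$. This holds if and only if $\mathbf{H}(V)_T=0$ for $|T|>n$, since a vector space is zero exactly when its dual is. Likewise, $\mathbf{H}(V)_T$ is finite dimensional if and only if its dual is: a nonzero infinite-dimensional space has an infinite-dimensional dual. So finite generation of $V$ and of $V^\vee$ are equivalent.

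The main obstacle is getting the direction of the maps right when identifying the component maps of $V^\vee$ via $\eta$ in the infinite-dimensional setting. One must check that $\eta$ sends the projections $\psi^T_S$ to the inclusions $\phi^T_S$, and then apply duality only to the kernel/cokernel pair that is valid without finiteness. Once this is pinned down, the equivalence of the generation conditions follows immediately.
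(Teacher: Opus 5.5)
Your argument is correct and is the paper's proof: the paper's entire proof is the observation $\crosseffect(V^{\vee})(S)\simeq\Hom(\mathbf{H}(V)(S),\field)$, and you have spelled out why it holds and why vanishing and finite-dimensionality pass to duals. One correction to an aside you never use: over a field $\Hom(-,\field)$ is exact, so the cokernel of the dual map is the whole dual of the kernel, i.e.\ $\mathbf{H}(V^\vee)_T\cong\crosseffect(V)_T^{*}$ holds in infinite dimensions too, and your caveat that it is ``only a subspace'' is false.
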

\begin{proof}
  Follows from the observation that
  \[
  \crosseffect(V^{\vee})(S) \simeq \Hom(\mathbf{H}(V)(S), \field). \qedhere
  \]
\end{proof}
\begin{definition}
  We define an induction functor $\Ind: \FB\textnormal{-Mod} \to \FIs\textnormal{-Mod}$ on an $\FB$-module $V$ as
  \[
  \Ind(V)_{T} = \bigoplus_{S\subset T} V_{S},
  \]
  where the projections act by projecting onto the corresponding direct summands, the subset inclusions act by inclusion of the corresponding direct summands and bijections act by permuting the direct summands.
\end{definition}
\begin{proposition}[{{\cite[Theorem 4.1.5]{CEF2015}, \cite{Slominska2004}, \cite{Pirashvili2000}}}]\label{Equivalence of FB and FI_sharp modules}
  The category of $\FIs$-modules is equivalent to the category of $\FB$-modules, via the equivalence of categories
  \[
  \Ind: \textnormal{FB-Mod} \rightleftarrows \FIs\textnormal{-Mod}: \mathbf{H}.
  \]
\end{proposition}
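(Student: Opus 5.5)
We show that $\mathbf{H}\circ\Ind$ and $\Ind\circ\mathbf{H}$ are naturally isomorphic to the respective identity functors. Functoriality of $\Ind$ in the $\FB$-module is clear: a morphism $g:V\to W$ acts summandwise. The partial-injection action is also explicit: for $f:T\to T'$ and $x\in V_S$ with $S\subset T$, the element $\Ind(V)(f)(x)$ is $V(f|_S)(x)$ in the summand $f(S)$ if $S\subset\operatorname{dom} f$, and $0$ otherwise. With this description, functoriality and compatibility with the decomposition of partial injections are routine.

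\textbf{Step 1: $\mathbf{H}\Ind\cong\mathrm{id}$.} Take $S\varsubsetneq T$. The image of $\Ind(V)_S\to\Ind(V)_T$ is exactly $\bigoplus_{R\subset S}V_R$. Summing over all proper subsets $S$ gives $\bigoplus_{R\varsubsetneq T}V_R$. Hence
\[
\mathbf{H}(\Ind V)_T=\Ind(V)_T\Big/\bigoplus_{R\varsubsetneq T}V_R\cong V_T ,
\]
and this isomorphism is natural in $V$ and $\Sigma_T$-equivariant.

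\textbf{Step 2: construct $\Ind\mathbf{H}(V)\to V$.} By \cref{injective and projective homology} we may replace $\mathbf{H}$ by $\crosseffect$, so $\crosseffect(V)_S\subset V_S$. Define $\Phi_T:\bigoplus_{S\subset T}\crosseffect(V)_S\to V_T$ on the $S$-summand as $V(\phi^T_S)$. We then check that $\Phi$ commutes with the three generating types of morphisms.

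For inclusions and bijections this is immediate from functoriality of $V$.

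For a projection $\psi^T_U$ and $x\in\crosseffect(V)_S$, write $\psi^T_U\circ\phi^T_S=\phi^U_{S\cap U}\circ\psi^S_{S\cap U}$ in $\FIs$.
\begin{itemize}
\item If $S\subset U$, then $\psi^S_{S\cap U}$ is the identity, so $\Phi$ sends $x$ to $V(\phi^U_S)(x)$, matching the $\Ind$ side.
\item Otherwise $S\cap U\varsubsetneq S$. Then $V(\psi^S_{S\cap U})(x)=0$ because $x$ lies in the cross effect, and the $\Ind$ side is also $0$.
\end{itemize}
So $\Phi$ is a morphism of $\FIs$-modules.

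\textbf{Step 3: $\Phi_T$ is an isomorphism,} by induction on $|T|$.

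\emph{Injectivity.} Suppose $\sum_S y_S\mapsto 0$ with $y_S\in\crosseffect(V)_S$. Apply $V(\operatorname{pr}_S)$ for a minimal $S$ with $y_S\neq 0$. Every other summand $y_{S'}$ either has $S'\not\subset S$, so it dies, or has $S'=S$. Hence $y_S=0$, a contradiction.

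\emph{Surjectivity.} Given $x\in V_T$, we have $x-\operatorname{tl}(x)=\sum_{S\varsubsetneq T}\pm x_S$. Each $x_S$ lies in the image of $V_S\to V_T$, and by induction $V_S$ lies in the image of $\Phi_S$. Meanwhile $\operatorname{tl}(x)\in\crosseffect(V)_T$ by \cref{lem:tl}(2). So $x$ is in the image of $\Phi_T$.

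Naturality of $\Phi$ in $V$ is clear. Together with Step 1 this gives the equivalence.

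The only delicate point is the projection compatibility in Step 2. That is where the cross-effect description from \cref{injective and projective homology} is essential.
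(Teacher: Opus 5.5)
Your proof is correct, and there is no proof in the paper to compare it with. The statement is quoted from \cite[Theorem 4.1.5]{CEF2015} and \cite{Slominska2004,Pirashvili2000}, and the paper only uses it, together with \cref{injective and projective homology}, to get the splitting $V_T\cong\bigoplus_{S\subset T}\crosseffect(V)_S$. You build that splitting directly from tools the paper has already proved, namely \cref{lem:tl} and \cref{injective and projective homology}:
\begin{itemize}
\item you check the map $\Phi\colon\Ind\crosseffect(V)\to V$ on the three classes of morphisms via $\psi^T_U\phi^T_S=\phi^U_{S\cap U}\psi^S_{S\cap U}$;
\item injectivity comes from a minimal-support argument;
\item surjectivity comes from $\operatorname{tl}$.
\end{itemize}
This makes the section self-contained and shows that the equivalence is exactly the cross-effect splitting the paper uses later.

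When you write it up, make three small points explicit. In the injectivity step, $V(\phi^T_S)$ is injective because $\psi^T_S\phi^T_S=\mathrm{id}$. In the surjectivity step, you use naturality of $\Phi$ with respect to $\phi^T_S$ to move $\operatorname{im}\Phi_S$ into $\operatorname{im}\Phi_T$. Checking only three types of morphisms suffices because every partial injection factors as a projection, then a bijection, then an inclusion.

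You can also drop the induction. By M\"obius inversion, $x=\sum_{S\subset T}V(\phi^T_S)\operatorname{tl}\bigl(V(\psi^T_S)x\bigr)$, and each $\operatorname{tl}\bigl(V(\psi^T_S)x\bigr)$ lies in $\crosseffect(V)_S$. This gives an explicit preimage of $x$ under $\Phi_T$.
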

\begin{corollary}
  Let $V$ be an $\FIs$-module generated in arity $\leq n$. Then
  \[
  V_{T} \simeq \Ind(\crosseffect(V))_T = \bigoplus_{\substack{S \subset T \\ \lvert S \rvert \leq n}} \crosseffect (V)_{S}.
  \]
\end{corollary}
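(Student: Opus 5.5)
The plan is to apply the equivalence of \cref{Equivalence of FB and FI_sharp modules} to $V$ and then read off which summands of $\Ind$ can be non-zero.

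First, I would identify the counit of this equivalence explicitly. Consider the map
\[
\Ind(\crosseffect(V))_T = \bigoplus_{S\subset T}\crosseffect(V)_S \longrightarrow V_T
\]
which on the summand indexed by $S$ is $V(\phi^T_S)$ restricted to $\crosseffect(V)_S$. I would check that this is natural in $T$ with respect to all partial injections. For inclusions and bijections this is clear. For a projection $\psi^T_{T'}$ it reduces to the following fact: for $x\in\crosseffect(V)_S$, the element $V(\psi^T_{T'})V(\phi^T_S)(x)$ equals $V(\phi^{T'}_{S})(x)$ if $S\subset T'$, and is $0$ otherwise. The second case holds because the composite partial injection then factors through the retraction of $S$ onto $S\cap T'\subsetneq S$, and that retraction kills $\crosseffect(V)_S$.

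Next, I would use \cref{injective and projective homology} to identify $\crosseffect(V)$ with $\mathbf{H}(V)$. With this identification, the map above is the counit $\Ind\mathbf{H}(V)\to V$ of the equivalence in \cref{Equivalence of FB and FI_sharp modules}, and so it is an isomorphism. If I wanted to avoid relying on that identification, I would prove the isomorphism directly.
\begin{itemize}
\item For injectivity, apply $V(\mathrm{pr}_{S})$ and induct on $|S|$, using \cref{lem:tl}.
\item For surjectivity, write $x=\sum_{S\subset T}\mathrm{tl}(x_S)$, which is Möbius inversion of the defining formula of $\mathrm{tl}$. Part 2 of \cref{lem:tl} then shows that each $\mathrm{tl}(x_S)$ lies in the image of the summand $\crosseffect(V)_S$.
\end{itemize}

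Finally, I would use the hypothesis on arity. Since $V$ is generated in arity $\leq n$, the definition gives $\crosseffect(V)_S=0$ whenever $|S|>n$. Those summands therefore drop out, and what remains is the stated decomposition $\bigoplus_{|S|\leq n}\crosseffect(V)_S$.

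The only step needing care is the naturality check for projections in the first step. It is a short computation with the decomposition of partial injections and the vanishing of cross effects under proper retractions, so I do not expect a real obstacle.
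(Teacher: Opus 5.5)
Your proposal is correct and is essentially the paper's argument: the corollary is read off from the equivalence $\Ind \rightleftarrows \mathbf{H}$ between $\FB$- and $\FIs$-modules combined with $\crosseffect\cong\mathbf{H}$, after discarding the summands $\crosseffect(V)_S$ with $|S|>n$. Your explicit map is the counit of $\Ind\dashv\crosseffect$, which is an isomorphism because $\Ind$ is an equivalence.

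One notational caution for your optional direct argument. In the paper $x_S=V(\mathrm{pr}_S)(x)$ lies in $V_T$, and $\operatorname{tl}(x_S)=\operatorname{tl}(x)_S=0$ for $S\subsetneq T$. The Möbius inversion you want is therefore $x=\sum_{S\subset T}V(\phi^T_S)\bigl(\operatorname{tl}_S(V(\psi^T_S)x)\bigr)$, where $\operatorname{tl}_S$ denotes the operator $\operatorname{tl}$ computed in $V_S$.
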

\begin{proposition}[{{\cite[Corollary 4.1.8]{CEF2015}}}]\label{Bounds for representation stablity}
  Let $V$ be an $\FIs$-module. If $V$ is finitely generated in arity $\leq n$ then the underlying consistent sequence is representation stable with stable range $2n$.
\end{proposition}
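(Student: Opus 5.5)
The plan is to reduce the statement to the corresponding result of Church–Ellenberg–Farb for $\FI$-modules. By definition, an $\FIs$-module $V$ is finitely generated in arity $\leq n$ exactly when the underlying $\FI$-module $\Res(V)$ is. The underlying consistent sequence of $V$ is, by construction, that of $\Res(V)$: it uses only the inclusions $\underline{k}\hookrightarrow\underline{k+1}$ and the $\Sigma_k$-actions. So it suffices to show that this consistent sequence is representation stable with stable range $2n$. I would do this directly, using the explicit structure available in the $\FIs$ setting.

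\textbf{Step 1: decompose $V$.} By \cref{Equivalence of FB and FI_sharp modules} and its corollary, $V_T \simeq \bigoplus_{S\subset T,\,|S|\leq n}\crosseffect(V)_S$. Writing $W_k:=\crosseffect(V)_k$, which is a finite-dimensional $\Sigma_k$-representation, this gives
\[
V_N \simeq \bigoplus_{k=0}^{n}\Ind_{\Sigma_k\times\Sigma_{N-k}}^{\Sigma_N}\bigl(W_k\boxtimes \mathrm{triv}\bigr).
\]

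\textbf{Step 2: injectivity and surjectivity.} Under this isomorphism the map $\phi_N$ is the inclusion of summands indexed by subsets of $\underline{N}\subset\underline{N+1}$. Hence $\phi_N$ is injective for all $N$. Every summand $S\subset\underline{N+1}$ with $|S|\leq n\leq N$ misses some element of $\underline{N+1}$, so it can be moved by a permutation into $\underline{N}$. Therefore the $\Sigma_{N+1}$-span of $\phi_N(V_N)$ is all of $V_{N+1}$ once $N\geq n$, which is well within the range $N\geq 2n$.

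\textbf{Step 3: multiplicity stability.} This is the main step. Decompose each $W_k$ into irreducibles $V_\mu$ with $\mu\vdash k$. By Pieri's rule, $\Ind_{\Sigma_k\times\Sigma_{N-k}}^{\Sigma_N}(V_\mu\boxtimes\mathrm{triv})$ is the sum of $V_\nu$ over all $\nu$ obtained from $\mu$ by adding $N-k$ boxes, no two in the same column. Write $\nu=\lambda[N]$, and let $\bar\mu$ be $\mu$ with its first row removed. Then $\lambda$ is obtained from $\bar\mu$ by adding at most one box per column, each in rows $2,3,\dots$, subject to the constraints imposed by the row lengths of $\mu$.

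The key claim is that for $N\geq 2n$ the set of such $\lambda$ does not depend on $N$. The only possible $N$-dependence is the requirement that the first row $N-|\lambda|$ be at least $\lambda_1$ and at least $\mu_1$. Since $|\lambda|\leq k\leq n$ and $\lambda_1\leq \mu_1\leq k\leq n$, we get $N-|\lambda|\geq 2n-n\geq \lambda_1,\mu_1$ whenever $N\geq 2n$. The remaining box counts go into the first row automatically, so the constraint is vacuous in this range. Consequently each multiplicity $c_{\lambda,N}$ is constant for $N\geq 2n$, and conditions 1–3 of representation stability hold with stable range $2n$. Alternatively, one can simply cite \cite[Corollary 4.1.8]{CEF2015}.

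I expect the hardest part to be the careful bookkeeping in Pieri's rule: verifying that the first-row inequality is the only $N$-dependent constraint, and that it is exactly what forces the bound $2n$ rather than a smaller one.
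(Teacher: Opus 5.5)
Your proof is correct. It is essentially the standard argument behind the result the paper cites: the paper gives no proof of its own beyond \cite[Corollary 4.1.8]{CEF2015}, and that result rests on the same two ingredients you use. First, write $V_N\simeq\bigoplus_k\Ind_{\Sigma_k\times\Sigma_{N-k}}^{\Sigma_N}(\crosseffect(V)_k\boxtimes\mathrm{triv})$ via the $\FIs$--$\FB$ equivalence. Second, apply Pieri's rule to each piece; the contribution of $V_\mu\subset\crosseffect(V)_k$ stabilizes once $N\geq k+\mu_1$, and $k+\mu_1\leq 2n$.

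Your bookkeeping also checks out: $\phi_N$ is injective for all $N$, the $\Sigma_{N+1}$-span is everything once $N\geq n$, and the first-row inequality $N-|\lambda|\geq\mu_1$ is the only $N$-dependent Pieri constraint.
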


\subsection{$\cube$-modules}
Recall that $\cube$ is the category of finite subsets of $\Np$ and the subset inclusions. In this paper we are interested in $\cube^{\op}$-modules. Note that the core of $\cube$ is a discrete category whose object are finite subsets of $\Np$. We denote it $\mathcal{P}^{\simeq}$.
\begin{definition}
  Let $V$ be a $\cube$-module. We define the homology functor $\mathbf{H}: \cube\textnormal{-Mod} \to \mathcal{P}^{\simeq}\textnormal{-Mod}$ as
  \[
  \mathbf{H}(V)_T = \operatorname{coker}(\bigoplus_{S \subsetneq T}V_{S} \to V_{T}).
  \]
  We say that $V$ is generated in arity $\leq n$ if $\mathbf{H}(V)$ vanishes on all sets with cardinality $> n$. Moreover, we say that $V$ is finitely generated if it is arity-wise finite-dimensional and generated in arity $\leq n$ for some $n\in\mathbb{N}$.
\end{definition}
\begin{definition}
  Let $V$ be a $\cube^\op$-module. We define the cross effect functor $\crosseffect: \cube^{\op}\textnormal{-Mod} \to \mathcal{P}^{\simeq}\textnormal{-Mod}$ as
  \[
  \crosseffect(V)_T = \operatorname{ker}(V_T \to \bigoplus_{S \subsetneq T}V_{S}).
  \]
  We say that $V$ is cogenerated in arity $\leq n$ if $\crosseffect(V)$ vanishes on all sets with cardinality $> n$. Moreover, we say that $V$ is finitely cogenerated if it is arity-wise finite-dimensional and cogenerated in arity $\leq n$ for some $n\in\mathbb{N}$.
\end{definition}
The following result is almost tautological.
\begin{lemma}\label{lem:properties_cube}
  An $\FIs$-module $V$ is generated in arity $\leq n$ (resp., finitely generated) if and only if one of the two equivalent conditions hold:
  \begin{itemize}
    \item the underlying $\cube$-module $\Res(V)$ is generated in arity $\leq n$ (resp., finitely generated);
    \item the underlying $\cube^\op$-module $\Res(V)$ is cogenerated in arity $\leq n$ (resp., finitely cogenerated).
  \end{itemize}
\end{lemma}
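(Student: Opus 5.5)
The plan is to compare $\mathbf{H}$ and $\crosseffect$ of the restricted modules directly with the $\FIs$ versions. The key observation is that the definitions of $\mathbf{H}$ for $\cube$-modules and of $\crosseffect$ for $\cube^{\op}$-modules involve exactly the same diagrams as their $\FIs$ counterparts, after forgetting the symmetric group actions. For an $\FIs$-module $V$ and a finite set $T$, the $\FIs$-homology is
\[
\mathbf{H}(V)_T=\operatorname{coker}\Bigl(\bigoplus_{S\varsubsetneq T}V_S\to V_T\Bigr),
\]
where the maps are induced by the subset inclusions $\phi^T_S$. These are precisely the morphisms of $\cube$. Hence $\mathbf{H}(\Res V)_T$, computed in $\cube$, coincides as a vector space with $\mathbf{H}(V)_T$, computed in $\FIs$. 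The only loss is the $\Sigma_T$-action, which is irrelevant for vanishing or finite dimensionality.

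The same argument handles cogeneration. The $\cube^{\op}$-module $\Res(V)$ uses, via the identification of $\FIs$ with its opposite, the projections $\psi^T_S$. So $\crosseffect(\Res V)_T=\ker\bigl(V_T\to\bigoplus_{S\varsubsetneq T}V_S\bigr)$, which is $\crosseffect(V)_T$ as a vector space. By \cref{injective and projective homology}, $\crosseffect(V)_T\cong\mathbf{H}(V)_T$.

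It follows that the three conditions are equivalent:
\begin{itemize}
  \item the $\FIs$-module $V$ is generated in arity $\leq n$;
  \item the $\cube$-module $\Res(V)$ is generated in arity $\leq n$;
  \item the $\cube^{\op}$-module $\Res(V)$ is cogenerated in arity $\leq n$.
\end{itemize}
Each of them says that the same vector spaces $\mathbf{H}(V)_T\cong\crosseffect(V)_T$ vanish for $|T|>n$. Since the arity-wise spaces $V_T$ are unchanged by restriction, the arity-wise finite-dimensionality conditions also agree.

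The one point needing care is the finiteness clause. Finite generation of $V$ as an $\FIs$-module asks that $\mathbf{H}(V)$ be finitely generated as an $\FB$-module, i.e.\ finite-dimensional and nonzero in finitely many arities. The $\cube$ version instead asks that the $V_T$ be finite-dimensional and that generation hold in bounded arity. To reconcile the two, invoke the Corollary after \cref{Equivalence of FB and FI_sharp modules}: $V_T\cong\bigoplus_{S\subset T}\crosseffect(V)_S$. Since $\crosseffect(V)_S\cong\mathbf{H}(V)_S$, the space $V_T$ is finite-dimensional for all $T$ exactly when all $\mathbf{H}(V)_S$ are. Combined with the bounded-arity condition, this gives the equivalence of the finite generation statements. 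I expect this bookkeeping to be the only mild obstacle; the rest is tautological.
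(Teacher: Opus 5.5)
Your proposal is correct and is essentially the paper's argument. The paper proves the lemma simply by noting that $\mathbf{H}$ commutes with restriction to $\cube$ and $\crosseffect$ commutes with restriction to $\cube^{\op}$, which is exactly your observation that the defining cokernels and kernels are literally the same vector spaces; your bookkeeping for the finiteness clause (via $V_T\cong\bigoplus_{S\subset T}\crosseffect(V)_S$ and \cref{injective and projective homology}) just makes explicit what the paper leaves implicit.
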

\begin{proof}
  Follows from the commutativity of the following diagrams.
  \[
  \begin{tikzcd}
    \FIs\text{-Mod} \arrow[r, "\Res"] \arrow[d, "\mathbf{H}"] & \cube\textnormal{-Mod} \arrow[d, "\mathbf{H}"] &
    \FIs\text{-Mod} \arrow[r, "\Res"] \arrow[d, "\crosseffect"] & \cube^{\op}\textnormal{-Mod} \arrow[d, "\crosseffect"] \\
    \FB\text{-Mod} \arrow[r, "\Res"] & \mathcal{P}^{\simeq}\text{-Mod} &
    \FB\text{-Mod} \arrow[r, "\Res"] & \mathcal{P}^{\simeq}\text{-Mod}
  \end{tikzcd} 
  \]
\end{proof}

\subsection{(Differential) graded $\FB, \FI$ and $\FIs$-modules and algebras}
We write $\mathcal{I}$ to denote any of the categories $\FB, \FI$ or $\FIs$. Often the target category of our functors is not just $\field$ vector spaces but the category of (differential) graded $\field$ vector spaces/algebras. We call such functors (differential) graded $\mathcal{I}$-modules/algebras.

The data of a graded $\mathcal{I}$-module $V$ is equivalent to the data of a sequence $\{V^t\}_{t\in\mathbb{Z}}$ of $\mathcal{I}$-modules. The notion of being finitely generated is often not satisfied by the graded modules of interest. The problem is that a finitely generated graded $\mathcal{I}$-module has to be concentrated in a finite number of degrees. Instead, we use the notion of being of finite type.
\begin{definition}
  Let $V$ be a (differential) graded $\mathcal{I}$-module. We say that $V$ is of finite type if $V^t$ is finitely generated for all $t\in\mathbb{Z}$. If $\mathcal{I} = \FIs$ we say that $V$ has slope $\leq n$ if $(V^t)$ is generated in arity $\leq nt$ for all $t\in\mathbb{Z}$.
\end{definition}
\begin{remark}
  Note that in \cite{CEF2015} slope is defined differently. However, it follows from \cite[Proposition 3.2.4]{CEF2015} that two definitions coincide for $\FIs$-modules over $\mathbb{Q}$.
\end{remark}
\begin{proposition}[{{\cite[Theorem 4.2.3]{CEF2015}}}]\label{prop:algebra_slope}
  Let $A$ be a graded $\FI$-algebra generated by a graded $\FI$-submodule $V$ with $V^t=0$ for $t\leq 0$. If $V$ is of finite type, then $A$ is of finite type. If $V$ has slope $\leq n$, then $A$ has slope $\leq n$.
\end{proposition}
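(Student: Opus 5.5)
This is cited from \cite[Theorem 4.2.3]{CEF2015}, but a self-contained argument can be given: bound the generation of $A$ by the generation of the tensor powers of $V$, using the $\FIs$ description of generation.

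\emph{Reduction.} Since $A$ is generated by $V$, there is a surjection of graded $\FI$-modules
\[
\bigoplus_{k\geq 0} V^{\otimes k} \twoheadrightarrow A,
\]
where the tensor product is taken pointwise in arity. As $V^t=0$ for $t\leq 0$, a summand $V^{t_1}\otimes\dots\otimes V^{t_k}$ lands in degree $t=t_1+\dots+t_k$ with every $t_i\geq 1$. So in each fixed degree $t$ only finitely many summands contribute, namely those with $k\leq t$. Quotients of finitely generated $\FI$-modules are finitely generated, and generation in arity $\leq N$ passes to quotients because $\mathbf{H}$ is right exact. It therefore suffices to show the following: if $U$ is generated in arity $\leq a$ and $W$ in arity $\leq b$, with both of finite type, then $U\otimes W$ is finitely generated in arity $\leq a+b$.

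\emph{Key step.} Suppose $U$ is generated by elements $u\in U_S$ with $|S|\leq a$ and $W$ by elements $w\in W_{S'}$ with $|S'|\leq b$. Take $x\otimes y\in U_T\otimes W_T$ and write $x=\sum U(f)(u)$ and $y=\sum W(g)(w)$ for injections $f,g$ into $T$. Then $U(f)(u)\otimes W(g)(w)$ is the image, under the inclusion $\im f\cup\im g\hookrightarrow T$, of an element of $(U\otimes W)_{\im f\cup \im g}$, and $|\im f\cup\im g|\leq a+b$. Hence $U\otimes W$ is generated in arity $\leq a+b$, and finitely many generators suffice because the sets $U_S$, $W_{S'}$ are finite dimensional. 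In the $\FIs$ setting one can instead observe directly that $\Ind(P)\otimes\Ind(Q)$ has $\mathbf{H}$ supported in arity $\leq \deg P+\deg Q$. This step is the only real content, and I expect the bookkeeping over unions of images to be the main point needing care.

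\emph{Conclusion.} For the slope statement: if $V^{t_i}$ is generated in arity $\leq nt_i$, then the summand $V^{t_1}\otimes\cdots\otimes V^{t_k}$ is generated in arity $\leq n\sum t_i = nt$. Hence $A^t$ is a quotient of a finite direct sum of modules generated in arity $\leq nt$, so it is generated in arity $\leq nt$. Finite type follows in the same way, since a finite sum of finitely generated modules is finitely generated.
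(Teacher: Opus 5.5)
Your proof is correct. The paper gives no argument of its own here; it only cites \cite[Theorem 4.2.3]{CEF2015}. Your proposal is essentially the standard proof of that theorem. You realise $A$ as a quotient of the tensor (or symmetric) algebra on $V$, and you use $V^t=0$ for $t\le 0$ so that each $A^t$ receives only finitely many summands $V^{t_1}\otimes\cdots\otimes V^{t_k}$. You then use that generation degree is subadditive under the pointwise tensor product, which your union-of-images argument proves.

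What your self-contained route buys over the bare citation is that it works directly with generation degree. As the paper's remark after the definition of slope points out, \cite{CEF2015} defines slope differently. Applying the cited theorem therefore needs the comparison via \cite[Proposition 3.2.4]{CEF2015}, which holds for $\FIs$-modules over $\field$. Your argument instead gives the paper's notion of slope for any graded $\FI$-algebra. It therefore covers the $\FIs$-algebras $\operatorname{Sym}(\pi_*^{\field}\SL)^{\vee}$ and $\operatorname{Sym}(\mlgraph)$ used later, since generation of an $\FIs$-module is defined through its underlying $\FI$-module.

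One small point of presentation: you state the key step with a finite-type hypothesis, but the slope claim of the proposition does not assume $V$ is of finite type. This is harmless, because your bound $a+b$ never uses finite dimensionality; only the finite-generation claim does.
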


The construction of the induction functor from $\FB$-modules to $\FIs$-modules works in the (differential) graded context without change.
\begin{lemma}\label{homology of the free functors}
  Let $V$ be a dg $\FB$-module. The graded $\FIs$-modules $\Ind (H(V))$ and $H(\Ind(V))$ are naturally isomorphic.
\end{lemma}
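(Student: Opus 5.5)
The plan is to exploit that $\Ind$ is built from direct sums, and that over $\field$ cohomology commutes with direct sums. For each finite set $T$ and each degree, the definition gives
\[
\Ind(V)_T = \bigoplus_{S\subset T} V_S
\]
as a dg vector space, with differential acting summandwise. Hence
\[
H\bigl(\Ind(V)\bigr)_T = H\Bigl(\bigoplus_{S\subset T} V_S\Bigr) \cong \bigoplus_{S\subset T} H(V_S) = \Ind\bigl(H(V)\bigr)_T .
\]
Here the middle isomorphism is the canonical map induced by the inclusions of summands. It is an isomorphism because kernels, images and cokernels of a direct sum of maps are the direct sums of the corresponding pieces. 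This defines a candidate isomorphism $\Theta_T$ for every $T$; it is clearly natural in the dg $\FB$-module $V$.

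It remains to check that $\Theta = (\Theta_T)_T$ is a morphism of graded $\FIs$-modules, i.e.\ that it commutes with the structure maps. Every partial injection factors as a projection $\psi$, then a bijection, then an inclusion $\phi$, so it suffices to check three generating types of morphisms.

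\begin{itemize}
\item For an inclusion $\phi^T_{T'}$, the map $\Ind(V)(\phi)$ is the inclusion of the summands indexed by $S\subset T'$ into those indexed by $S\subset T$. Since it is a chain map and is summandwise the identity, the induced map on cohomology is the corresponding summand inclusion, which is exactly $\Ind(H(V))(\phi)$.
\item For a projection $\psi$, the same argument applies with summand projections in place of summand inclusions.
\item For a bijection $\sigma$, the map permutes summands via $V(\sigma|_S)\colon V_S\to V_{\sigma(S)}$. On cohomology this gives $H(V)(\sigma|_S)$, matching the definition of $\Ind$ applied to the $\FB$-module $H(V)$.
\end{itemize}

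Since functoriality of $H$ preserves composites, compatibility with these generators gives compatibility with all partial injections.

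No step is a real obstacle. The only point needing care is to use, consistently, the definition of $\Ind$ on morphisms in the dg setting, namely that the structure maps are genuine chain maps acting summandwise. Once that is recorded, everything reduces to the additivity of cohomology.
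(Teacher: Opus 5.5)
Your proposal is correct and follows the paper's proof: both identify $\Ind(V)_T$ as the direct sum $\bigoplus_{S\subset T}V_S$ with summandwise differential, and use that homology commutes with direct sums. The paper simply asserts compatibility with the $\FIs$ structure maps as ``clearly natural''. Your explicit check on inclusions, projections and bijections fills in exactly that step.
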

\begin{proof}
  The induced module $\Ind(V)_{T}$ is just a direct sum of dg vector spaces $V_{S}$ for $S \subset T$. Hence,
  \[
  H(\Ind(V)_{T}) = H(\bigoplus_{S \subset T} V_{S}) \simeq \bigoplus_{S \subset T} H(V_{S}) = \Ind(H(V))_T,
  \]
  which is clearly natural.
\end{proof}
\begin{lemma}
  Let $\mathcal{O}$ be a reduced operad in (differential graded) $\field$ vector spaces and let $A$ be an $\FB$ $\mathcal{O}$-algebra, i.e. a functor from $\FB$ to the category of $\mathcal{O}$-algebras. Then $\Ind(A)$ is an $\FIs$ $\mathcal{O}$-algebra.
\end{lemma}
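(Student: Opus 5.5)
The plan is to equip $\Ind(A)_T=\bigoplus_{S\subset T}A_S$ with the product $\mathcal{O}$-algebra structure. A finite direct sum of vector spaces is also their product, and the category of $\mathcal{O}$-algebras has products computed on underlying (dg) vector spaces. So I set $\Ind(A)_T:=\prod_{S\subset T}A_S$ as an $\mathcal{O}$-algebra, with operations acting componentwise:
\[
\mu\bigl((a^1_S)_S,\dots,(a^k_S)_S\bigr)=\bigl(\mu(a^1_S,\dots,a^k_S)\bigr)_{S}
\]
for $\mu\in\mathcal{O}(k)$. The underlying $\FIs$-module is exactly $\Ind$ of the underlying $\FB$-module of $A$, so functoriality (the identities and composition) is already known. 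It remains only to check that each structure map $\Ind(A)(f)$ is a morphism of $\mathcal{O}$-algebras.

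First I will write down $\Ind(A)(f)$ for a general partial injection $f\colon T\to T'$, read off from the decomposition of $f$ into a projection, a bijection and an inclusion. The summand $A_S$ with $S\subset\operatorname{dom} f$ is sent to $A_{f(S)}$ via $A(f|_S)$, and summands with $S\not\subset\operatorname{dom}f$ are sent to $0$.

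A map into a product is an algebra morphism precisely when each of its components is. So I fix $S'\subset T'$ and examine the component $\Ind(A)_T\to A_{S'}$. There are two cases.
\begin{itemize}
\item If $S'\subset\im f$, the component is $A\bigl((f|_{f^{-1}(S')})^{-1}\bigr)^{-1}$, i.e.\ the bijection $A(f|_{f^{-1}(S')})$, composed with the projection onto the factor $A_{f^{-1}(S')}$. Projections out of a product are algebra morphisms, and $A(f|_{f^{-1}(S')})$ is one by hypothesis, since $A$ is a functor into $\mathcal{O}$-algebras.
\item If $S'\not\subset\im f$, the component is the zero map.
\end{itemize}

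The main obstacle is showing that the zero map, which appears in the second case (in particular for inclusions $\phi^{T'}_T$ into the new summands), is an $\mathcal{O}$-algebra morphism. This is where reducedness enters. For $k\geq1$ we have $\mu(0,\dots,0)=0$ by multilinearity. Since $\mathcal{O}$ is reduced, there are no arity-$0$ operations whose values would have to be preserved, so the zero map commutes with all operations.

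I would also remark that with a unital convention ($\mathcal{O}(0)=\field$) the statement fails, because inclusions would not preserve units. This is why the hypothesis is needed.
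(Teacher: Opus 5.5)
Your proposal is correct and takes the same approach as the paper. The paper's proof is the single observation that, since $\mathcal{O}$ is reduced, all structure maps of $\Ind(A)$ (with its componentwise product structure) are $\mathcal{O}$-algebra maps; your componentwise check (projections and the bijections $A(f|_{f^{-1}(S')})$ are algebra maps, and the zero components are algebra maps precisely because $\mathcal{O}(0)=0$) is a careful unpacking of that sentence.
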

\begin{proof}
  Since $\mathcal{O}$ is a reduced operad all structure maps of $\Ind(A)$ are $\mathcal{O}$-algebra maps.
\end{proof}
\begin{corollary}\label{lem:ind_of_algebra}
  Let $\mathcal{O}$ be a reduced operad in (differential graded) $\field$ vector spaces and let $A$ be an $\mathcal{O}$-algebra. Then $\Ind(A)$, where we view $A$ as an $\FB$ $\mathcal{O}$-algebra concentrated in arity $1$, is an $\FIs$ $\mathcal{O}$-algebra such that
  \[
  \Ind(A)_{S} = A^{\times S},
  \]
  where the projections act by projecting onto the corresponding direct summands, the subset inclusions act by inclusion of the corresponding direct summands and bijections act by permuting the direct summands.
\end{corollary}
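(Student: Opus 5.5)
The plan is to obtain this corollary by specialising the preceding lemma to the $\FB$ $\mathcal{O}$-algebra concentrated in arity $1$, and then unwinding the definition of $\Ind$. First I define $A$ as an $\FB$ $\mathcal{O}$-algebra by setting $A_S = A$ when $|S| = 1$ and $A_S = 0$ otherwise. Here $0$ is the terminal (and initial) $\mathcal{O}$-algebra, which makes sense because $\mathcal{O}$ is reduced. A bijection between one-element sets acts by the identity of $A$. This gives a functor $\FB \to \mathcal{O}\text{-Alg}$, and the preceding lemma then says directly that $\Ind(A)$ is an $\FIs$ $\mathcal{O}$-algebra.

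Next I compute the underlying modules from the definition: $\Ind(A)_T = \bigoplus_{S \subset T} A_S$. Only the singletons $S = \{t\}$ contribute, so $\Ind(A)_T = \bigoplus_{t \in T} A$. As a vector space this is $A^{\times T}$, since finite direct sums coincide with finite products. The description of the structure maps then follows verbatim from the definition of $\Ind$. Projections $\psi^T_S$ project onto the summands indexed by $t \in S$. Inclusions $\phi^T_S$ include summands. A bijection $\sigma$ sends the summand indexed by $t$ to the one indexed by $\sigma(t)$.

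The only point needing care, and the place I expect the real content to lie, is that the $\mathcal{O}$-algebra structure on $\Ind(A)_T$ coming from the lemma is the product structure on $A^{\times T}$. That structure acts componentwise: an operation $\mu \in \mathcal{O}(k)$ sends $(a^1, \dots, a^k)$ to $(\mu(a^1_t, \dots, a^k_t))_{t \in T}$. I would check this directly, using that the product in $\mathcal{O}$-algebras is computed on underlying modules with componentwise operations.

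With this identification, each projection is a product projection, hence an algebra map. Each bijection permutes factors, hence is an algebra map. An inclusion $A^{\times S} \to A^{\times T}$ extends by zero in the components indexed by $T \setminus S$. This is an algebra map precisely because reducedness ($\mathcal{O}(0) = 0$) means there are no constants to preserve, and $\mu(0, \dots, 0) = 0$ for $k \geq 1$ by multilinearity. So the operations on the zero components remain zero. This recovers the lemma's conclusion concretely and confirms the stated formula $\Ind(A)_S = A^{\times S}$ together with the described action.
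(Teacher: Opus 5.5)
Your proposal is correct and follows the paper's route: the corollary is the preceding lemma specialised to $A$ placed in arity $1$. The paper justifies that lemma in one line (reducedness of $\mathcal{O}$ makes all structure maps of $\Ind(A)$, in particular the extensions by zero, into $\mathcal{O}$-algebra maps), which is exactly the componentwise check you carry out.
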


\section{Recollection on the spaces of embeddings modulo immersions}
\subsection{Setting}\label{subsection:setting}
Let $\R^d$ be the Euclidean space and $\R^{m+1} = \R^{m+1} \times \{0\}^{d-m-1}$ be a linear subspace, where $d \geq m+1$. Let $U$ be a subspace of $\R^{m+1}$. We write $\iota_U: U \to \R^d$ to denote the standard inclusion $U \subset \R^{m+1} \subset \R^d$. Let $\text{Emb}_c(U, \R^d)$, respectively $\text{Imm}_c(U, \R^d)$, be the space of smooth embeddings, respectively immersions, which coincide with $\iota_U$ outside a compact. We assume that $\text{Emb}_c(U, \R^d)$ and $\text{Imm}_c(U, \R^d)$ are pointed at $\iota_U$. There is an obvious inclusion:
\[
\text{Emb}_c(U, \R^d) \hookrightarrow \text{Imm}_c(U, \R^d).
\]
Let $\overline{\text{Emb}}_c(U, \R^d)$ be the homotopy fiber of this inclusion. We fix a model for this homotopy fiber by setting
\[
\overline{\text{Emb}}_c(U, \R^d) = \{\gamma:[0,1]\to \text{Imm}_c(U, \R^d) \mid \gamma(0)\in \text{Emb}_c(U, \R^d), \gamma(1)=\iota_U\}.
\]

We write $\overline{\textnormal{Emb}}_c(U, \R^d)_\iota$ to denote the connected component containing $\gamma_U: [0,1] \to \text{Imm}_c(U, \mathbb{R^d})$ where $\gamma_U(t) = \iota_U$ for all $t\in [0,1]$. Furthermore, we assume that $\overline{\textnormal{Emb}}_c(U, \R^d)$ and $\overline{\textnormal{Emb}}_c(U, \R^d)_\iota$ are pointed at $\gamma_U$.

\begin{definition}
  Let $\widetilde{\mathcal{O}}_c(\R^{m+1})$ be the poset of open subsets of $\R^{m+1}$ whose complement is a compact non-empty manifold. The constructions above define the functors
  \begin{gather*}
    \overline{\textnormal{Emb}}_c(-, \R^d): \widetilde{\mathcal{O}}_c(\R^{m+1})^\op \to \textnormal{Top}_*; \\
    \overline{\textnormal{Emb}}_c(-, \R^d)_{\iota}: \widetilde{\mathcal{O}}_c(\R^{m+1})^\op \to \textnormal{Top}_*.
  \end{gather*}
\end{definition}

\subsection{Fulton-MacPherson operad}

Let $\mathcal{F}_{m+1}$ be the $(m+1)$-dimensional Fulton--MacPherson operad introduced by \cite[Section 3]{GetzlerJones1994}. There is a similarly defined infinitesimal $\mathcal{F}_{m+1}$-bimodule $\mathcal{IF}_{m+1}$ described in \cite[Section 6]{Turchin2013}. Its variation $\mathcal{IF}_U$ given in \cite{Turchin2013} and \cite[Section 2]{FTW2020} will be of special interest.

We will briefly describe $\mathcal{IF}_U$ here. Let $U$ be an object of $\widetilde{\mathcal{O}}(\R^{m+1})$ and $S$ a finite set. The points of the topological spaces $\mathcal{IF}_U(S)$ are represented by finite rooted trees whose leaves are indexed by elements of $S$ and which have one distinguished vertex. Each non-distinguished vertex is required to have at least $2$ children while the distinguished vertex can have any amount of children. Further, all vertices of such trees are decorated. A non-distinguished vertex with $k$ children is decorated by a quotient of the configuration space $\text{Conf}(k, \R^{m+1})$ by translations and pointwise scalar multiplications. The distinguished vertex with $k$ children is decorated by the configuration space $\text{Conf}(k, U)$.

We write $\mathcal{IF}_{m+1}$ for the special case where $U = \R^{m+1}$. Geometrically, the space $\mathcal{IF}_{m+1}(S)$ is a variant of Fulton--MacPherson--Axelrod--Singer compactification of a configuration space $\text{Conf}(S\cup \{*\}, \mathbb{S}^{m+1})$, where $\mathbb{S}^{m+1} = \R^{m+1} \cup \{\infty\}$ is the one point compactification of $\R^{m+1}$ and a point labeled by $*$ is required to be $\infty$. Similarly, $\mathcal{IF}_U(S)$ is the preimage $\pi^{-1}(U_*^S)$, where $U_* = U\cup\{\infty\}$ and $\pi$ is the obvious projection from $\mathcal{IF}_{m+1}(S)$ onto ${(\mathbb{S}^{m+1})}^S$.

For every object $U$ of $\widetilde{\mathcal{O}}_c(\R^{m+1})$, the spaces $\mathcal{IF}_U(S)$ assemble together into infinitesimal $\mathcal{F}_{m+1}$-bimodules, where both left and right actions are given by the grafting of trees.

Note that the inclusion of $\R^{m+1}$ into $\R^d$ defines an inclusion of operads $\mathcal{F}_{m+1} \xhookrightarrow{} \mathcal{F}_d$. This defines a canonical structure of an infinitesimal $\mathcal{F}_{m+1}$-bimodule on $\mathcal{IF}_d := \Res_{\mathcal{F}_{m+1}}^{\mathcal{F}_d}\mathcal{IF}_d$.

\begin{proposition}
  Let $d \geq m+3$. The natural map
  \[
  \overline{\textnormal{Emb}}_c(U, \R^d) \to \textnormal{IBimod}^h_{\mathcal{F}_{m+1}}(\mathcal{IF}_U, \mathcal{IF}_d^\field),
  \]
  induces a rational equivalence of nilpotent spaces on each connected component to its image, and is finite-to-one at the $\pi_0$ level. Here the domain of the functors is $\widetilde{\mathcal{O}}_c(\R^{m+1})^\op$.
\end{proposition}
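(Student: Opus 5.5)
This statement is essentially a recollection of the Fresse--Turchin--Willwacher comparison, so the plan is to assemble it from three known inputs rather than prove anything from scratch. The three inputs are the Goodwillie--Weiss embedding calculus convergence, the Boavida de Brito--Weiss / Turchin description of embeddings modulo immersions as derived mapping spaces of (infinitesimal) modules, and the rational formality and relative rationalisation results of \cite{FTW2020}.

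\textbf{Step 1: embedding calculus.} For $U \in \widetilde{\mathcal{O}}_c(\R^{m+1})$ and $d \geq m+3$, the codimension is at least $3$, so the Goodwillie--Klein--Weiss convergence theorem applies. Hence $\overline{\textnormal{Emb}}_c(U,\R^d)$ is weakly equivalent to its Taylor tower limit $T_\infty\overline{\textnormal{Emb}}_c(U,\R^d)$. This equivalence is natural in $U$, since restriction to smaller open subsets commutes with the tower construction.

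\textbf{Step 2: operadic model.} Following \cite{Turchin2013} and \cite[Section 2]{FTW2020}, I identify the tower limit with the derived mapping space of infinitesimal bimodules
\[
T_\infty\overline{\textnormal{Emb}}_c(U,\R^d)\simeq \textnormal{IBimod}^h_{\mathcal{F}_{m+1}}(\mathcal{IF}_U,\mathcal{IF}_d).
\]
The point to check is that the homotopy fiber over immersions corresponds exactly to working relative to $\mathcal{F}_{m+1}$ rather than to framed versions. This uses that $U\subset\R^{m+1}$ is parallelisable with a framing compatible with $\iota_U$, together with the Boavida de Brito--Weiss argument. Naturality in $U$ follows because a restriction $U'\subset U$ induces a map of infinitesimal bimodules $\mathcal{IF}_{U'}\to\mathcal{IF}_U$.

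\textbf{Step 3: rationalisation.} Compose with the rationalisation $\mathcal{IF}_d\to\mathcal{IF}_d^{\field}$ to obtain the map in the statement. The main obstacle lies here. Derived mapping spaces into a rationalised target need not be rationalisations of the source, and $\pi_0$ can change. I would invoke the main theorem of \cite{FTW2020}. It applies because $\mathcal{IF}_U$ is cofibrant-replaceable by a finite-type cell structure, since $U$ is the complement of a compact manifold and hence of finite type, and because $d-m\geq 3$ makes the relevant spaces $\mathcal{IF}_d(k)$ simply connected. Under these hypotheses the rationalisation map induces, on each component, a map of nilpotent spaces that is a rational equivalence onto the corresponding component of the target.

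Finite-to-one on $\pi_0$ comes from an obstruction-theoretic argument on the tower. At each stage $T_k$, the difference between integral and rational components is measured by torsion in finitely generated homotopy groups of fibers. These fibers are total homotopy fibers of cubes of configuration spaces in $\R^d$. Finiteness of each layer, combined with convergence in the codimension $\geq 3$ range, yields finite fibers. The one thing I would take directly from \cite{FTW2020} rather than re-derive is that the map is finite-to-one on $\pi_0$ uniformly along the tower and not merely at each stage.
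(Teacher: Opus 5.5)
Your route is the paper's route. Turchin's natural transformation identifies $\textnormal{IBimod}^h_{\mathcal{F}_{m+1},\leq k}(\mathcal{IF}_U,\mathcal{IF}_d)$ with the $k$-th Taylor approximation. Convergence then gives $\overline{\textnormal{Emb}}_c(U,\R^d)\simeq\textnormal{IBimod}^h_{\mathcal{F}_{m+1}}(\mathcal{IF}_U,\mathcal{IF}_d)$ naturally in $U$, and one quotes \cite[Theorem 1.2]{FTW2020}. That theorem already contains both the componentwise rational equivalence and the finite-to-one statement on $\pi_0$. So your obstruction-theoretic sketch for $\pi_0$ is not needed, and as written it is not really an argument.

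The real problem is the justification of convergence in Step 1. An object $U\in\widetilde{\mathcal{O}}_c(\R^{m+1})$ is an open subset of $\R^{m+1}$, so it is $(m+1)$-dimensional. For $d=m+3$ its codimension in $\R^d$ is $2$, not $\geq 3$, so the Goodwillie--Klein--Weiss theorem cannot be invoked on the grounds you give.

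What is needed is the handle-dimension form of the convergence theorem. The embeddings are fixed only near infinity, so $U$ must be built from a neighbourhood of infinity by handles of index $\leq d-3$. Here $K=\R^{m+1}\setminus U$ is a non-empty compact codimension-$0$ submanifold. For $R$ large, every component of $\overline{B}_R\setminus\operatorname{int}K$ therefore meets $\partial K$, so this cobordism can be built on $\partial\overline{B}_R$ without $(m+1)$-handles. Hence $U$ has handle dimension $\leq m$ relative to infinity, and $d-m\geq 3$ suffices.

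This is exactly why the statement lives on $\widetilde{\mathcal{O}}_c(\R^{m+1})$ and why the paper stresses $U\neq\R^{m+1}$. For $U=\R^{m+1}$ an $(m+1)$-handle is unavoidable. Your argument never uses non-emptiness of the complement, so it would equally, and unjustifiably, claim convergence in that case.

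A similar misattribution occurs in Step 3. The simple connectivity of $\mathcal{IF}_d(k)\simeq\operatorname{Conf}(k,\R^d)$ holds for every $d\geq 3$ regardless of $m$, so that is not where the hypothesis $d-m\geq 3$ enters.
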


\begin{proof}
  Let $\widetilde{\mathcal{O}}(\R^{m+1})$ denote the poset of open subsets of $\R^{m+1}$ containing a neighbourhood of $\infty$. It was shown in \cite[Section 6]{Turchin2013} that for any $k \in \mathbb{N}\cup\{\infty\}$ there is a natural transformation
  \begin{equation}\label{Taylor approximation}
    \overline{\textnormal{Emb}}_c(U, \R^d) \to \textnormal{IBimod}^h_{\mathcal{F}_{m+1},\leq k}(\mathcal{IF}_U, \mathcal{IF}_d)
  \end{equation}
  expressing the right-hand side as $k$-th Taylor approximation in the sense of Goodwillie-Weiss manifold calculus. In particular, the functors here have domain $\widetilde{\mathcal{O}}(\R^{m+1})^\op$. The convergence result of \cite{GK2015,GKW2001,GW1999} imply that for $k=\infty$ in \eqref{Taylor approximation} the natural transformation evaluated on $U\neq\R^{m+1}$ is a weak-equivalence, when $d \geq m+3$. Therefore, there is a natural weak equivalence
  \[
  \overline{\textnormal{Emb}}_c(U, \R^d) \to \textnormal{IBimod}^h_{\mathcal{F}_{m+1},\leq \infty}(\mathcal{IF}_U, \mathcal{IF}_d) \coloneq \textnormal{IBimod}^h_{\mathcal{F}_{m+1}}(\mathcal{IF}_U, \mathcal{IF}_d)
  \]
  when we restrict the domain category to $\widetilde{\mathcal{O}}_c(\R^{m+1})^\op$. The result follows from \cite[Theorem 1.2]{FTW2020}.
\end{proof}

\subsection{Hairy Graph Complexes}
We start by briefly introducing the hairy graph complex $\text{HGC}_{A, d}$ associated to a $\Omega\mathbf{L}^{c}_{\infty}$ algebra $A$ and an integer $d\geq2$. For detailed introduction consult \cite[Section 3]{FTW2020}, \cite{Willwacher2023}.

A hairy graph $G = (V^E, f, V^I, E)$ decorated by $A$ is a connected graph where:
\begin{itemize}
  \item $V^E$ is a finite non-empty set of vertices, which are called external. The external vertices are required to have valence $1$;
  \item $f: V^E \to \bar{A}$ is a map which assigns to each external vertex $v\in V^E$ a homogeneous element of the augmentation ideal $\bar{A}$;
  \item $V^I$ is a finite set, disjoint from $V^E$, of vertices, which are called internal. The internal vertices are required to have valence at least 3;
  \item $E$ is a finite set of oriented edges. We allow tadpoles and multiple edges.
\end{itemize}
We assign degree $-d$ to the elements of $V^I$, degree $d-1$ to the elements of $E$ and degree $-\deg f(x)$ to every $x\in V^E$. The degree of a hairy graph is then given by
\[
\deg G = (d-1)|E| - d|V^I| - \sum_{x\in V^E} \deg f(x).
\]
Two oriented hairy graphs $G_1$ and $G_2$ are equivalent if there is a graph isomorphism $G_1\simeq G_2$ preserving decoration and orientation. Let $\mathcal{G}$ be the set of all equivalence classes of oriented hairy graphs. The space $\text{HGC}_{A,d}$ is the graded $\field$ vector space spanned by infinite linear combinations of the elements of $\mathcal{G}$, modulo the multilinearity relations with respect to the decorations of the external vertices and the orientation relations. 

We equip $\text{HGC}_{A,d}$ with a differential $\partial = \partial_A + \partial_\text{split} + \partial_\text{join}$, where $\partial_A$ is given by the differential of $A$ applied to the decoration, $\partial_\text{split}$ splits the internal vertices in all possible ways, $\partial_\text{join}$ merges together at least two external vertices replacing them by an internal vertex, a new external vertex decorated by the product of the merged decorations and an edge connecting those two vertices. Further, the vector space $\text{HGC}_{A,d}$ is equipped with the structure of $L_\infty$-algebra. This structure is functorial in $A$ and preserves quasi-isomorphisms. 
\begin{remark}\label{rmk:conventions}
  In the conventions that we use the $\mathbf{L}_{\infty}$ algebra $\text{HGC}_{A, d}$ is homologically graded. Further, it is shifted by $1$, i.e. all operations are of degree $-1$.
\end{remark}

\begin{lemma}\label{degree of graph}
  Let $G$ be a hairy graph decorated by a cdga $A$ concentrated in degrees $\leq m$. Then
  \[
  \deg G \geq \frac{d-3}{2}|V^I| + \frac{d-2m-1}{2}|V^E|.
  \]
\end{lemma}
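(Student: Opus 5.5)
The proof is a counting argument in two steps: first bound the contribution of each external vertex from below using the degree bound on $A$, then bound the number of edges from below using the valence conditions.

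\emph{External vertices.} Each decoration $f(x)$ lies in $\bar{A}$, which is concentrated in degrees $\leq m$. So $-\deg f(x) \geq -m$, and
\[
\deg G \geq (d-1)|E| - d|V^I| - m|V^E|.
\]

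\emph{Edges.} Count half-edges. Every external vertex has valence $1$ and every internal vertex has valence at least $3$. A tadpole contributes $2$ to the valence of its vertex, so the handshake identity still holds:
\[
2|E| = \sum_{v} \operatorname{val}(v) \geq 3|V^I| + |V^E|.
\]
Hence $|E| \geq \frac{3|V^I| + |V^E|}{2}$. Since $d \geq 2$ in the definition of $\text{HGC}_{A,d}$, the coefficient $d-1$ is positive, so we may substitute this lower bound for $|E|$.

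\emph{Combining.} Substituting gives
\[
\deg G \geq \frac{(d-1)(3|V^I| + |V^E|)}{2} - d|V^I| - m|V^E| = \frac{3d-3-2d}{2}|V^I| + \frac{d-1-2m}{2}|V^E|.
\]
This equals $\frac{d-3}{2}|V^I| + \frac{d-2m-1}{2}|V^E|$, which is the claimed inequality.

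\emph{Conventions to check.} The main point needing care is the sign convention for decorations. The degree of an external vertex is $-\deg f(x)$, and $A$ is cohomologically graded with degrees at most $m$, while $\text{HGC}_{A,d}$ is homologically graded as in \cref{rmk:conventions}. The stated degree formula already absorbs this, so the bound $-\deg f(x) \geq -m$ is the correct direction. I would also confirm that decorations lie in the augmentation ideal; this only matters for nonnegativity, and is not needed for the inequality itself. Neither the multilinearity nor the orientation relations affect the degree count, so the bound holds for every basis graph and hence for every homogeneous graph of $\text{HGC}_{A,d}$.
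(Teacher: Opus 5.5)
Your proof is correct and follows the paper's argument exactly: it combines the half-edge count $2|E| \geq 3|V^I| + |V^E|$ with the bound $\deg f(x) \leq m$ and substitutes into the degree formula. Your explicit check that $d-1>0$ justifies the substitution; the paper leaves this implicit.
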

\begin{proof}
  From the definition of a hairy graph it follows that
  \[
  2|E| \geq 3|V^I| + |V^E|.
  \]
  We combine it with the assumption that $A$ is concentrated in degrees $\leq m$ to get the result:
  \[
  \begin{split}
    \deg G = (d-1)|E| - d|V^I| - \sum_{x\in V^E} \deg f(x)
    &\geq \frac{d-1}{2}(3|V^I| + |V^E|) - d|V^I| - m|V^E| \\
    &= \frac{d-3}{2}|V^I| + \frac{d-2m-1}{2}|V^E|. \qedhere
  \end{split}
  \]
\end{proof}

\begin{lemma}\label{Lower bound on degree of Hairy Graphs}
  Let $d \geq 4$. Let $G$ be a hairy graph decorated by a cdga $A$ concentrated in degrees $\leq m$. Then
  \[
  \deg G \geq (d-m-2)|V^E|-d+3.
  \]
\end{lemma}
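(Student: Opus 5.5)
The plan is to bound $\deg G$ from below using the same decoration estimate as in \cref{degree of graph}, but to replace the purely local valence count by a combination of two inequalities: the valence inequality and a connectivity (spanning tree) inequality. Choosing the combination so that $|V^I|$ cancels exactly should give the claimed bound. Throughout, write $e=|E|$, $v=|V^I|$ and $h=|V^E|$.

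\textbf{Step 1 (decorations).} Every decoration $f(x)$ is a homogeneous element of $\bar A$, and $A$ is concentrated in degrees $\leq m$, so $\deg f(x)\leq m$. Hence
\[
\deg G \geq (d-1)e - d v - m h .
\]

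\textbf{Step 2 (two combinatorial inequalities).} The first is the valence count already used in \cref{degree of graph}: external vertices are univalent and internal vertices have valence at least $3$, so $2e \geq 3v + h$. The second uses that $G$ is connected with $v+h$ vertices, so a spanning tree gives $e \geq v + h - 1$. Tadpoles and multiple edges only increase $e$, so both inequalities remain valid for such graphs.

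\textbf{Step 3 (linear combination).} We look for coefficients $a,b\geq 0$ with
\[
(d-1)e - dv = a\,(e-v-h+1) + b\,(2e-3v-h) + (\text{terms in } h \text{ and constants only}).
\]
Matching the coefficients of $e$ and $v$ gives $a+2b=d-1$ and $a+3b=d$, so $b=1$ and $a=d-3$. This yields the identity
\[
(d-1)e - dv = (d-3)(e-v-h+1) + (2e-3v-h) + (d-2)h - (d-3).
\]
For $d\geq 4$ (indeed already for $d\geq 3$) we have $a=d-3\geq 0$, so both bracketed terms are nonnegative by Step 2. Combining this with Step 1 gives
\[
\deg G \geq (d-2)h-(d-3)-mh = (d-m-2)|V^E| - d + 3 .
\]

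The only step requiring any care is Step 3: one has to find the multipliers that eliminate $|V^I|$ exactly, and check that $a\geq 0$, which is where the hypothesis on $d$ enters. Everything else is the bookkeeping already carried out in the proof of \cref{degree of graph}.
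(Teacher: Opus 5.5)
Your proof is correct, and it takes a more direct route than the paper. The paper first reduces to graphs whose internal vertices are all trivalent: splitting a vertex of valence $\geq 4$ into two vertices joined by a new edge keeps the graph connected and lowers the degree by $1$. It then combines the resulting equality $2|E| = 3|V^I| + |V^E|$ with the connectivity bound $|E| \geq |V^I| + |V^E| - 1$ to get $|V^I| \geq |V^E| - 2$, and substitutes this into \cref{degree of graph}. It treats $|V^E| = 1$ separately to record the sharper bound $d-m-2$. The reduction is genuinely needed on that route, because $|V^I| \geq |V^E| - 2$ fails for non-trivalent graphs (e.g.\ a single internal vertex joined to $|V^E| \geq 4$ hairs).

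Your identity $(d-1)e - dv = (d-3)(e-v-h+1) + (2e-3v-h) + (d-2)h - (d-3)$ instead takes a nonnegative combination of the valence and connectivity inequalities that eliminates $|E|$ and $|V^I|$ at once. So you never modify the graph, never need to forget orientations, and handle every value of $|V^E|$ uniformly. What the paper's route buys is a transparent picture of the extremal graphs, plus the improved estimate for one hair. Your argument recovers the former implicitly: for $d \geq 4$, equality forces both inequalities to be tight, so $G$ is a trivalent tree with all decorations in degree $m$. As you note, both arguments actually only use $d \geq 3$.
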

\begin{proof}
  We forget about the orientation and analyze what is the smallest possible degree of a graph $G$ with $r \geq 2$ external vertices.

  Notice that if an internal vertex of $G$ is of valence $\geq 4$ we can replace it by two internal vertices and an edge between them so that each new internal vertex is of valence $\geq 3$. This procedure reduces the degree of the graph by $1$. To bound the degree of $G$ from below we can assume that all internal vertices of $G$ are of valence exactly $3$, hence,
  \[
  2|E| = 3|V^I| + |V^E|.
  \]
  On the other hand, since $G$ is connected,
  \[
  |E| \geq |V^I| + |V^E| - 1.
  \]
  Combining the two expressions we get $3|V^I| + |V^E| \geq 2|V^I| + 2|V^E| - 2$, and so  $|V^I| \geq |V^E|-2$. Plugging it in \cref{degree of graph} we get the result.

  The bound can be improved when $|V^E| = 1$. Indeed, since the unique external vertex is required to be of valence $1$, there should be at least one internal vertex. It follows from \cref{degree of graph} that
  \[
  \deg G \geq \frac{d-3}{2}|V^I| + \frac{d-2m-1}{2}|V^E| \geq \frac{d-3}{2} + \frac{d-2m-1}{2} = d-m-2.
  \]
\end{proof}

In what follows we deal with algebras whose augmentation ideal $\bar{A}$ decomposes as a finite direct sum of cdga
\[
\bar{A} \simeq \bigoplus_{i\in\underline{r}} A_i.
\]
In such cases we will be interested in special kinds of graphs.

\begin{definition}
  Let $A$ be an augmented cdga equipped with the decomposition of its augmentation ideal $\bar{A} \simeq \bigoplus_{i\in\underline{r}} A_i$. We say that a hairy graph $G$ decorated by $A$ is \emph{homogeneous} if there is a map $\phi : V^E_G \to \underline{r}$ such that for every external vertex $v\in V^E_G$ the decoration $f(v)$ of $v$ belongs to $A_{\phi(v)}$.
  We say that a graph $G$ is \emph{surjective} homogeneous if $\phi$ is surjective.  \\ We write $\textnormal{HGC}_{A,d}^{\textnormal{surj}}$ to denote the $\mathbf{L}_\infty$ subalgebra of the $\mathbf{L}_\infty$ algebra $\textnormal{HGC}_{A,d}$ generated by surjective homogeneous graphs.
\end{definition}

\begin{remark}
  The graded vector spaces $\textnormal{HGC}_{A,d}$ (resp. $\textnormal{HGC}_{A,d}^{\textnormal{surj}}$) are spanned by homogeneous (resp. surjective homogeneous) graphs, as homogeneous graphs are stable under $\mathbf{L}_\infty$ operations.
\end{remark}

\begin{lemma}\label{Only finite number of graphs of fixed degree}
  Let $d \geq m+3 \geq 4$. Let $A$ be a finite dimensional augmented cdga concentrated in degrees $\leq m$, and equipped with a decomposition of its augmentation ideal $\bar{A} \simeq \bigoplus_{i\in\underline{r}} A_i$. Then $(\textnormal{HGC}_{A,d})_n$ is finite-dimensional for every $n\in\mathbb{Z}$. In particular, the same is true for $(\textnormal{HGC}_{A,d}^{\textnormal{surj}})_n$.
\end{lemma}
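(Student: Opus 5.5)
The plan is to show that only finitely many isomorphism classes of undecorated graph shapes can contribute in a fixed degree $n$. Once that is known, finite-dimensionality follows from the finite dimensionality of $A$.

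First I bound the number of external vertices. By \cref{Lower bound on degree of Hairy Graphs}, and since $d \geq m+3$ gives $d-m-2 \geq 1$, any hairy graph $G$ of degree $n$ satisfies
\[
n = \deg G \geq (d-m-2)|V^E| - d + 3.
\]
Hence $|V^E| \leq (n+d-3)/(d-m-2)$. In particular $|V^E|$ is bounded in terms of $n$ alone.

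Next I bound the internal vertices and the edges. I will not use \cref{degree of graph} directly, since its coefficient $\frac{d-2m-1}{2}$ may be negative. Instead I write the degree as
\[
\deg G = (d-1)|E| - d|V^I| - \sum_{x\in V^E}\deg f(x),
\]
and use that every decoration has degree $\deg f(x) \leq m$, so the last sum is at most $m|V^E|$. For $|V^I|$ I use the valence inequality $2|E| \geq 3|V^I| + |V^E|$. This gives
\[
n + m|V^E| \geq (d-1)|E| - d|V^I| \geq \frac{d-3}{2}|V^I| + \frac{d-1}{2}|V^E|.
\]
Since $d \geq 4$, the coefficient $\frac{d-3}{2}$ is positive, so $|V^I|$ is bounded once $|V^E|$ is.

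The bound on $|E|$ comes from the same inequality read differently. Using $d|V^I| \leq \frac{2d}{3}|E|$ (again from $2|E|\geq 3|V^I|$), we get $(d-1)|E| - d|V^I| \geq (\frac{d}{3}-1)|E|$. This gives a bound on $|E|$ as well whenever $d>3$, which holds here. Thus there are only finitely many underlying combinatorial graphs with orientations.

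For each such graph, the space of decorations is a quotient of $\bar A^{\otimes V^E}$, which is finite-dimensional. So $(\textnormal{HGC}_{A,d})_n$ is a finite sum of finite-dimensional spaces. In particular, the "infinite linear combinations" in the definition reduce to finite ones in each degree. The statement for $\textnormal{HGC}_{A,d}^{\textnormal{surj}}$ follows because it is a graded subspace.

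The main point needing care is the lower end of the degree range: decorations may have negative degree. If $A$ is not concentrated in nonnegative degrees, $\deg f(x)$ is unbounded below, which is harmless since it only increases $\deg G$. Finite-dimensionality of $A$ means only finitely many degrees occur anyway. So the only genuinely needed inputs are the upper bound $\deg f(x)\le m$ and $d \geq 4$.
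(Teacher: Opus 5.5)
Your proof is correct and follows the paper's argument: you bound $|V^E|$ via the lower-bound lemma, then $|V^I|$ via $n \geq \frac{d-3}{2}|V^I| + \frac{d-2m-1}{2}|V^E|$, then $|E|$ from the degree formula. The inequality you rederive is exactly the one in the paper's lemma, so the sign of $\frac{d-2m-1}{2}$ was never an obstacle once $|V^E|$ is bounded; the only cosmetic difference is that you bound $|E|$ by eliminating $|V^I|$ through $2|E| \geq 3|V^I|$, whereas the paper plugs in the already-established bounds on $|V^I|$ and $|V^E|$.
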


\begin{proof}
  Fix $n \in \mathbb{Z}$. For a hairy graph $G$ of degree $n$ we have that
  \[
  n \geq (d-m-2)|V^E|-d+3
  \]
  by \cref{Lower bound on degree of Hairy Graphs}. Since the right-hand side of the inequality is increasing in the number of external vertices, we see that there is only a finite number of possible values of $|V^E|$. It follows that the number of internal vertices is also bounded from above, since
  \[
  |V^I| \leq \frac{2}{d-3} \bigl(n - \frac{d-2m-1}{2}|V^E| \bigr)
  \]
  by \cref{degree of graph}. Further, from the definition of degree
  \[
  n = (d-1)|E| - d|V^I| - \sum_{x\in V^E} \deg f(x) \geq (d-1)|E| - d|V^I| - m|V^E|,
  \]
  showing that the number of edges is also bounded. We conclude that $(\textnormal{HGC}_{A,d})_n$ is finitely generated since $A$ is finite dimensional.
\end{proof}

\subsection{Rational models}

\begin{proposition}[{{\cite[Theorem 1.1]{FTW2020}}}]\label{rational homotopy type}
  Let $d \geq m+3$. There is a weak equivalence
  \[
  \textnormal{IBimod}^h_{\mathcal{F}_{m+1}}(\mathcal{IF}_U, \mathcal{IF}_d^\field) \simeq \textnormal{MC}_\bullet (\textnormal{HGC}_{\Omega(U_*),d}).
  \]
  It is natural with respect to $U \in \widetilde{\mathcal{O}}_c(\R^{m+1})$.
\end{proposition}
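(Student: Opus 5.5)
Since this is \cite[Theorem 1.1]{FTW2020}, the plan is not to reprove it from scratch. Instead I would recall the structure of the Fresse--Turchin--Willwacher argument and then check the one extra point we need: naturality in $U \in \widetilde{\mathcal{O}}_c(\R^{m+1})$. The approach is to translate the derived mapping space of infinitesimal bimodules into a mapping space of rational models, and then identify that mapping space with a Maurer--Cartan space.

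First, I would replace $\mathcal{F}_{m+1}$ and the $\mathcal{F}_{m+1}$-module $\mathcal{IF}_d^\field$ by their rational models. By formality of the little discs operads, $\mathcal{F}_d^\field$ is modelled by the Kontsevich graph cooperad $\mathsf{Graphs}_d$, and $\mathcal{F}_{m+1}$ by $\mathsf{Graphs}_{m+1}$ (equivalently by $H^*(\mathcal{F}_{m+1})$). The source $\mathcal{IF}_U$ is modelled by a graphical infinitesimal comodule built from a Sullivan model of $U_* = U \cup \{\infty\}$. Because $U$ is the complement of a compact manifold in $\R^{m+1}$, a finite-dimensional model $A$ concentrated in degrees $\leq m$ is available; this is what makes \cref{Only finite number of graphs of fixed degree} apply.

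Second, I would use the Fresse-type adjunction between spaces and cdgas (Sullivan realization). This rewrites $\textnormal{IBimod}^h_{\mathcal{F}_{m+1}}(\mathcal{IF}_U, \mathcal{IF}_d^\field)$ as the simplicial mapping space between these (co)module models. The hypothesis $d \geq m+3$ enters here, through the finite-type and nilpotence conditions needed for rationalization.

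Third, I would resolve the source cofibrantly. A mapping space out of a quasi-free object is the Maurer--Cartan space of a complete dg Lie (or $\mathbf{L}_\infty$) algebra of derivations. A Koszul-duality argument then identifies that deformation complex, up to filtered quasi-isomorphism, with $\textnormal{HGC}_{A,d}$ for $A \simeq \Omega(U_*)$. Here the hairs record the $A$-decorations, and the $\partial_{\textnormal{join}}$ part of the differential records the product in $A$. Since $\textnormal{MC}_\bullet$ of complete $\mathbf{L}_\infty$ algebras sends filtered quasi-isomorphisms to weak equivalences (Goldman--Millson/Dolgushev--Rogers), this gives the equivalence stated in the proposition.

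For naturality, an inclusion $U \subset V$ in $\widetilde{\mathcal{O}}_c(\R^{m+1})$ induces $U_* \subset V_*$, hence a cdga map $\Omega(V_*) \to \Omega(U_*)$. The construction $A \mapsto \textnormal{HGC}_{A,d}$ is functorial in $A$ and preserves quasi-isomorphisms, so each step above is functorial. I expect the main obstacle to be exactly this: choosing the models functorially in $U$ rather than up to non-canonical zig-zags. The first thing I would try is to use the functorial de Rham forms $\Omega(U_*)$ directly, rather than finite-dimensional models, and to verify that the comparison zig-zag of FTW is built from natural constructions. This yields a natural weak equivalence in the homotopy category of functors, which is all we use.
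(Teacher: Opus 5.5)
Your proposal is correct and follows the same route as the paper: take \cite[Theorem 1.1]{FTW2020} as given and check that each step of its proof is functorial in $U$. The paper addresses the obstacle you anticipate by noting that every step in \cite{FTW2020} either replaces a $U$-independent input by a weakly equivalent object or applies a Quillen adjunction, except for the use of \cite[Proposition 2.5]{FTW2020} to replace $\mathcal{IF}_U$ by $U_*^{\times\bullet}$, and that each of these is visibly natural in $U$.
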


\begin{proof}
  In \cite[Theorem 1.1]{FTW2020} it was shown that for any $U \in \widetilde{\mathcal{O}}_c(\R^{m+1})$ there is the required weak equivalence. We analyse their proof to show that this weak equivalence is natural. At every step of the proof the authors do one of the following:
  \begin{itemize}
    \item replace the input, which does not depend on $U$, of a derived mapping space by a weakly equivalent object;
    \item use a Quillen adjunction;
    \item use \cite[Proposition 2.5]{FTW2020} to replace $\mathcal{IF}_U$ by $U_*^{\times\bullet}$.
  \end{itemize}
  All of these are clearly natural with respect to $U \in \widetilde{\mathcal{O}}_c(\R^{m+1})$.
\end{proof}

For an $\mathbf{L}_\infty$ algebra $\mathfrak{g}$ let $\mathfrak{g}_{>0}$ denote its positive degree truncation, i.e. $(\mathfrak{g}_{>0})_{n} = 0$ for $n \leq 0$, $(\mathfrak{g}_{>0})_{1} = \ker(\mathfrak{g}_{1} \xrightarrow{d} \mathfrak{g}_{0})$ and $(\mathfrak{g}_{>0})_{n} = \mathfrak{g}_{n}$ for $n \geq 2$.
\begin{corollary}[{{\cite[Corollary 1.3]{FTW2020}, \cite[Theorem 1.1, Corollary 1.3]{Berglund2015}}}]\label{rational homotopy groups}
  Let $d \geq m+3$. There is, for every $U \in \widetilde{\mathcal{O}}_c(\R^{m+1})$, an isomorphism (natural in $U$):
  \[
  \pi_*^\field \overline{\textnormal{Emb}}_c(U, \R^d)_\iota \simeq H_* \bigl((\textnormal{HGC}_{\Omega(U_*),d})_{>0}\bigr),
  \]
  where $\pi_i^\field$ is the rationalisation of the $i$-th homotopy group for $i\geq 2$ and $\pi_1^\field$ is the Malcev completion of the fundamental group equipped with the Baker--Campbell--Hausdorff product. Furthermore, there is an isomorphism
  \[
  H^{*}(\overline{\textnormal{Emb}}_c(U, \R^d)_\iota; \field) \simeq H^{*}_{\textnormal{CE}} \bigl((\textnormal{HGC}_{\Omega(U_*),d})_{>0}\bigr),
  \]
  whcih is also natural in $U$.
\end{corollary}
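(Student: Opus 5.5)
The plan is to compose three natural identifications and then apply Berglund's results on Maurer--Cartan spaces. First, the earlier Proposition gives a natural map $\overline{\textnormal{Emb}}_c(U,\R^d)\to \textnormal{IBimod}^h_{\mathcal{F}_{m+1}}(\mathcal{IF}_U,\mathcal{IF}_d^\field)$. On each connected component this map is a rational equivalence of nilpotent spaces onto its image. Second, \cref{rational homotopy type} identifies the target, naturally in $U$, with $\textnormal{MC}_\bullet(\textnormal{HGC}_{\Omega(U_*),d})$. Restricting to the component of the basepoint $\gamma_U$ therefore gives a natural rational equivalence from $\overline{\textnormal{Emb}}_c(U,\R^d)_\iota$ to a single component of $\textnormal{MC}_\bullet(\textnormal{HGC}_{\Omega(U_*),d})$.

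The next step is to check that this component is the component of the zero Maurer--Cartan element. The constant path $\gamma_U$ is the basepoint, and the standard inclusion $\iota_U$ is sent to the trivial (unit) derived bimodule map. Tracing it through the zigzag of Quillen adjunctions and weak equivalences in the proof of \cref{rational homotopy type}, I expect it to correspond to $0\in\textnormal{MC}(\textnormal{HGC}_{\Omega(U_*),d})$. Since every map in the zigzag is a pointed map, this identification is also natural in $U$. In particular, for $U\subset U'$ the restriction maps are sent to $\mathbf{L}_\infty$ morphisms $\textnormal{HGC}_{\Omega(U'_*),d}\to\textnormal{HGC}_{\Omega(U_*),d}$ that preserve $0$.

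Then I apply Berglund \cite[Theorem 1.1, Corollary 1.3]{Berglund2015}. For a suitably complete (pro-nilpotent) $\mathbf{L}_\infty$ algebra $\mathfrak g$, the component of $\textnormal{MC}_\bullet(\mathfrak g)$ containing $0$ is weakly equivalent to $\textnormal{MC}_\bullet(\mathfrak g_{>0})$. Its homotopy groups are $H_*(\mathfrak g_{>0})$, with $\pi_1$ carrying the BCH group structure. Under the conventions of \cref{rmk:conventions}, where the $\mathbf{L}_\infty$ algebra is homologically graded and shifted by one, these degrees match directly. This gives the first isomorphism. Both the homotopy group identification and the Chevalley--Eilenberg model are functorial for $\mathbf{L}_\infty$ morphisms, so naturality in $U$ follows.

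For cohomology, I use that $C^*_{\textnormal{CE}}(\mathfrak g_{>0})$ is a Sullivan-type model of the component, provided $\mathfrak g_{>0}$ is of finite type. This should follow from \cref{Only finite number of graphs of fixed degree}, after replacing $\Omega(U_*)$ by a quasi-isomorphic finite-dimensional cdga concentrated in degrees $\leq m+1$, which is allowed because $\textnormal{HGC}$ preserves quasi-isomorphisms. Here I will need to handle the degree bound carefully: the lemma assumes degrees $\leq m$, whereas this model may reach degree $m+1$. The rational equivalence from the first step then transfers the cohomology isomorphism. I expect the main obstacle to be the basepoint and naturality bookkeeping, namely confirming that the equivalence really lands in the $0$-component compatibly across all $U$, together with the finite-type and nilpotence hypotheses Berglund needs.
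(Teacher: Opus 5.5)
Your proposal is correct and takes essentially the paper's route. The paper gives no separate argument: it obtains the statement by combining the preceding Proposition and \cref{rational homotopy type} with \cite[Corollary 1.3]{FTW2020} and \cite[Theorem 1.1, Corollary 1.3]{Berglund2015}, which is the chain you reconstruct. The degree issue you flag is harmless. Since the complement of $U$ is a non-empty compact manifold, Alexander duality places $\bar H^*(U_*)$ in degrees $\leq m$. Working with the transferred $C_\infty$ structure on $H^*(U_*)$, as the paper does for manifold links, then gives finite type by the counting argument of \cref{Only finite number of graphs of fixed degree}, so you never need a finite-dimensional cdga model.
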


\section{String links}
For this section we fix the parameters $m \geq 1$ and $d \geq m+3$. We show that the spaces $\embci{\Rm \times -}$ assemble together into a homotopy $\FIs$ space $\SL$ of string links. We use the results of the previous sections to show that the rational homotopy/homology groups of the constructed space satisfy representation stability. We work over $\mathbb{Q}$.

\subsection{Structure maps}\label{Structure maps}
For a finite set $S \subset \Np$ we identify $\Rm \times S$ with a subspace of $\R^{m+1}$ via the map sending $(x, i)$ to $(x, \frac{1}{i})$. We write $\iota$ to denote composition of the subspace inclusion $\Rm \times S \subset \R^{m+1}$ and the standard linear inclusion $\R^{m+1} \subset \R^{d}$. Following the notation of \cref{subsection:setting} we introduce the following definition.
\begin{definition}\label{def:sl}
  For a finite set $S \subset \Np$ we define
  \[
  \SL(S) \coloneq \embci{\Rm \times S}
  \]
  to be the connected component of the space of smooth embeddings of $\Rm \times S$ into $\R^{d}$ which contains $\iota$. This space is pointed at $\iota$.
\end{definition}

\begin{definition}\label{cns:cube_sl}
  For finite sets $S, T \subset \Np$ such that $S \subset T$ we define a map
  \[
  \phi^{T}_{S}: \SL(T) \to \SL(S)
  \]
  given by precomposing with the subset inclusion $\Rm \times S \subset \Rm \times T$. With respect to these maps the spaces $\SL(-)$ form an $\cube^{\op}$ space which we denote $\SL$.
\end{definition}
\begin{proposition}\label{cns:fiop_sl}
  There exist structure maps
  \[
  \sigma^{*}: \SL(T) \to \SL(S) \text{ for a bijection } \sigma: S \to T
  \]
  such that $\cube^{\op}$ space $\SL$ can be promoted to a homotopy $\FI^{\op}$ space $\SL: \FI^{\op} \to \textnormal{Ho}(\mathbf{Top_*})$.
\end{proposition}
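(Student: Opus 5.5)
Our plan is to realise each bijection $\sigma: S \to T$ by an explicit ambient isotopy of $\R^d$ that permutes the standard strands, and then to check the functoriality relations only up to homotopy.

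\textbf{Step 1: the strand-swapping diffeomorphisms.} The strands $\Rm \times \{\frac1i\}$, $i \in T$, are parallel hyperplanes inside $\R^{m+1} \subset \R^d$. Since $d \geq m+3$, there are at least two normal directions to $\R^{m+1}$. We can therefore move the strand at height $\frac1{\sigma(s)}$ to height $\frac1s$ through the extra coordinates, for example by rotating in the plane spanned by $e_{m+1}$ and $e_{m+2}$, without strands colliding.

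The naive rigid motion is not compactly supported. So instead we take a path in $\textnormal{Conf}(T, \R^{d-m})$, with the $\R^{d-m}$ coordinates transverse to $\Rm$, from the configuration $(\frac1{\sigma(s)})$ to $(\frac1s)$. We then let the strands follow this path only over a compact ball $B \subset \Rm$, interpolating to the identity outside a larger ball by a bump function in the $\Rm$ coordinate. The result is a "braid-like" embedding of $\Rm \times T$ that agrees with $\iota \circ (\mathrm{id} \times \sigma^{-1})$ near the middle. Using the isotopy extension theorem, we produce a compactly supported diffeomorphism $\Phi_\sigma$ of $\R^d$ together with a reparametrisation. The map $\sigma^*(f)$ is then defined as the embedding $\Phi_\sigma^{-1}\circ f \circ (\mathrm{id}\times\sigma)$, corrected to agree with $\iota$ outside a compact set.

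An alternative that is cleaner and which we would try first avoids isotopy extension altogether. We split $\Rm = \Rm_{\leq 0} \cup \Rm_{\geq 0}$ along the first coordinate. On $\Rm_{\geq 1}$ we place the given link $f$ after reindexing by $\sigma$. On the slab $0 \le x_1 \le 1$ we insert a fixed "permutation braid" $\beta_\sigma$, and on $x_1 \le -1$ we insert $\beta_\sigma^{-1}$, both of which agree with $\iota$ outside the slabs. This stacking is continuous and pointed, and it lands in the component of $\iota$ because $\beta_\sigma\beta_\sigma^{-1}$ is isotopic to the trivial link. Compact support is preserved after first shrinking $f$ into a half-space via a canonical isotopy.

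\textbf{Step 2: homotopy relations.} We must verify three things up to pointed homotopy:
\begin{itemize}
  \item $(\tau\sigma)^* \simeq \sigma^*\tau^*$;
  \item $\mathrm{id}^* \simeq \mathrm{id}$;
  \item compatibility with the restriction maps $\phi^T_S$, namely $\phi^{T'}_{\tau(S)}$ composed with the bijection should agree with the induced bijection after restriction, so that arbitrary injections factor as bijection followed by inclusion consistently.
\end{itemize}

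Because $\FI$ is generated by bijections and inclusions with the standard relations, it suffices to check these relations. Each relation reduces to the statement that two stacked braids, such as $\beta_{\tau\sigma}$ and $\beta_\sigma\cdot\beta_\tau$, are isotopic rel boundary through links compatible with $\iota$. Here the key input is that in codimension $d-m\geq3$ the relevant configuration space $\textnormal{Conf}(T,\R^{d-m})$ is simply connected. Consequently any two paths between the same configurations are homotopic, and such a path-homotopy induces the desired homotopy of stacking maps. Removing strands from a braid again gives a path in a configuration space, which handles compatibility with $\phi$.

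\textbf{Main obstacle.} We expect the main difficulty to be making the constructions canonical enough that these homotopies are natural in $f$, and that they are pointed. The pointedness follows because each braid is concatenated with its inverse. We also need the homotopy $\mathrm{id}^*\simeq\mathrm{id}$, which requires a canonical "unshrinking" isotopy. We would handle this by fixing all choices, namely the paths in configuration space and the shrinking isotopy, once and for all, and using simple connectivity of $\textnormal{Conf}(T,\R^{d-m})$ for $d-m\ge 3$ to produce the coherence homotopies.
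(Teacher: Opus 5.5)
\textbf{There is a genuine gap in Step 1:} neither of your constructions gives a well-defined pointed map $\SL(T)\to\SL(S)$.

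The trouble comes from your premise that the correcting diffeomorphism must be compactly supported. After reindexing, the strand $f_{\sigma(s)}$ is asymptotic to $\Rm\times\{\frac{1}{\sigma(s)}\}$, but an element of $\SL(S)$ must have it asymptotic to $\Rm\times\{\frac{1}{s}\}$. A compactly supported $\Phi_\sigma$ of $\R^d$ cannot change this asymptotic behaviour. So in your first version, the phrase ``corrected to agree with $\iota$ outside a compact set'' is precisely the missing step.

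The stacking version has the same defect, plus others:
\begin{itemize}
\item The reindexed $f$ placed on $\{x_1\geq 1\}$ still has the wrong heights at infinity.
\item A braid that agrees with $\iota$ on both sides of its slab cannot permute the strands.
\item For $m\geq 2$, both the slab $[0,1]\times\R^{m-1}$ and the half-space reach infinity, so this decomposition cannot confine the non-standard part to a compact set. For $m=1$ you would need $\beta_\sigma$ and $\beta_\sigma^{-1}$ on opposite sides of $f$, i.e.\ conjugation.
\item Even after repairs, $\sigma^*(\iota)=\beta_\sigma\beta_\sigma^{-1}$ is only isotopic to $\iota$, so the map is not pointed.
\item A ``canonical shrinking'' of $f$ into a half-space or ball is not continuous in $f$ unless you pass to a different model.
\end{itemize}

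\textbf{The missing idea:} satisfy the boundary condition on the nose by postcomposing with a non-compactly-supported diffeomorphism of product form. Choose a path $\gamma$ in $\textnormal{Conf}(r,\R^{d-m})$ from $(\frac{1}{\sigma(s_i)})_i$ to $(\frac{1}{s_i})_i$. Lift it through the evaluation fibration $\textnormal{Diff}(\R^{d-m})\to\textnormal{Conf}(r,\R^{d-m})$ (Palais, Cerf) to a path $\widetilde{\gamma}$ with $\widetilde{\gamma}(0)=\operatorname{id}$. Then set $\sigma^*(f)=(\operatorname{id}_{\Rm}\times\widetilde{\gamma}(1))\circ(f_{\sigma(s_1)},\dots,f_{\sigma(s_r)})$. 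This is the paper's construction. It is continuous, sends $\iota$ to $\iota$ exactly, needs no shrinking, and gives $\operatorname{id}^*=\operatorname{id}$ strictly by taking the constant path.

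With this definition, your Step 2 reasoning goes through essentially as you state it. Simple connectivity of $\textnormal{Conf}(r,\R^{d-m})$ for $d-m\geq 3$ gives a path-homotopy between two choices $\gamma,\gamma'$. Lifting it relative to the already-lifted edges produces a path in $\textnormal{Diff}(\R^{d-m})$ from $\widetilde{\gamma}(1)$ to $\widetilde{\gamma}'(1)$. Every diffeomorphism along this path sends $\frac{1}{\sigma(s_i)}$ to $\frac{1}{s_i}$, so it yields a pointed homotopy. Compatibility with composition and with the maps $\phi^{T}_{S}$ then follows by composing lifts pointwise and restricting $\gamma_\sigma$ to sub-configurations.
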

\begin{proof}
  Let
  \[
  \sigma: S = (s_1 < \dots < s_r) \to T = (t_1 < \dots < t_r)
  \]
  be a bijection between finite subsets of $\Np$. For $f = (f_{t_1}, \dots, f_{t_r}) \in \embci{\Rm \times T}$ one would like to define $\sigma^{*} (f) = (f_{\sigma(s_1)}, \dots, f_{\sigma(s_r)})$. The problem with this approach is that such $\sigma^{*} (f)$ does not satisfy the boundary conditions since at infinity $f_{\sigma(s)}$ coincides with $\Rm\times\{\frac{1}{\sigma(s)}\}$ and not with $\Rm\times\{\frac{1}{s}\}$. To fix this problem we will transform the target space $\R^d$. Let $\gamma: [0,1] \to \textnormal{Conf}(r, \R^{d-m})$ be a path such that $\gamma(0) = (\frac{1}{\sigma(s_1)}, \dots, \frac{1}{\sigma(s_r)})$ and $\gamma(1) = (\frac{1}{s_1}, \dots, \frac{1}{s_r})$. For any finite subset of $\R^{d-m}$ the restriction map
  \[
  \textnormal{ev}: \textnormal{Diff}(\R^{d-m}) \to \textnormal{Conf}(r, \R^{d-m})
  \]
  is a fibration by \cite{Pal60,Cer61}. Therefore, we can lift $\gamma$ to $\widetilde{\gamma}: [0,1] \to \textnormal{Diff}(\R^{d-m})$ such that $\widetilde{\gamma}(0) = \operatorname{id}_{\R^{d-m}}$ and $\textnormal{ev}(\widetilde{\gamma}) = \gamma$. In particular, $\widetilde{\gamma}(1)$ is a diffeomorphism of $\R^{d-m}$ sending $\frac{1}{\sigma(s_i)}$ to $\frac{1}{s_i}$ for $1 \leq i \leq r$. We set
  \[
  \sigma^{*} (f) = (\operatorname{id}_{\Rm} \times \widetilde{\gamma}(1)) \circ (f_{\sigma(s_1)}, \dots, f_{\sigma(s_r)}).
  \]

  We claim that this is well-defined. First, we check that this construction does not depend on the choice of the path $\gamma$. Let $\gamma': [0,1] \to \textnormal{Conf}(r, \R^{d-m})$ be another path satisfying the same boundary conditions, let $\widetilde{\gamma}'$ be its lift, and let $(\sigma')^{*}$ be defined as $(\sigma')^{*} = (\operatorname{id}_{\Rm} \times \widetilde{\gamma}'(1)) \circ (f_{\sigma(s_1)}, \dots, f_{\sigma(s_r)})$. Since $d-m \geq 3$ the space $\textnormal{Conf}(r, \R^{d-m})$ is simply-connected, and we can find a homotopy $H: [0,1]^2 \to \textnormal{Conf}(r, \R^{d-m})$ such that $H(0,t) = \gamma(t)$, $H(1,t) = \gamma'(t)$, $H(x,0) = (\frac{1}{\sigma(s_1)}, \dots, \frac{1}{\sigma(s_r)})$ and $H(x,1) = (\frac{1}{s_1}, \dots, \frac{1}{s_r})$. The lifting problem
  \[
  \begin{tikzcd}
    \left[0,1\right]\times\{0\} \cup \{0,1\}\times\left[0,1\right] \arrow[r] \arrow[d, hook]
    & \textnormal{Diff}(\R^{d-m}) \arrow[d, "\textnormal{ev}"] \\
    \left[0,1\right]^2 \arrow[r, "H"] \arrow[ur, dotted, "\widetilde{H}"]
    & \textnormal{Conf}(r, \R^{d-m})
  \end{tikzcd}
  \]
  where the top map is $\operatorname{id}\cup(\widetilde{\gamma},\widetilde{\gamma}')$ admits a solution. By construction $\widetilde{H}(-,1): [0,1] \to \textnormal{Diff}(\R^{d-m})$ is a path from $\widetilde{\gamma}(1)$ to $\widetilde{\gamma}'(1)$ fixing $(\frac{1}{s_1}, \dots, \frac{1}{s_r})$. This induces a pointed homotopy between $\sigma^{*}$ and $(\sigma')^{*}$, hence, they define the same map in $\textnormal{Ho}(\mathbf{Top_*})$.

  Clearly, $\operatorname{id}^{*} = \operatorname{id}$. For two composable bijections $S \xrightarrow{\sigma} T \xrightarrow{\tau} K$ the composition of the induced maps is
  \begin{align*}
    \sigma^{*}\tau^{*} (f) &= (\operatorname{id}_{\Rm} \times \widetilde{\gamma}_\sigma(1)) \circ (\operatorname{id}_{\Rm} \times \widetilde{\gamma}_\tau(1))
    \circ (f_{\tau\sigma(s_1)}, \dots, f_{\tau\sigma(s_r)}) \\
    &=
    (\operatorname{id}_{\Rm} \times (\widetilde{\gamma}_\sigma \circ \widetilde{\gamma}_\tau)(1)) \circ (f_{\tau\sigma(s_1)}, \dots, f_{\tau\sigma(s_r)}).
  \end{align*}
  Evaluating $\widetilde{\gamma}_\sigma \circ \widetilde{\gamma}_\tau$ on $(\frac{1}{\tau\sigma(s_1)}, \dots, \frac{1}{\tau\sigma(s_r)})$ we get a path $\gamma_{\tau\sigma}$ which can be used to define $(\tau\sigma)^{*}$. Since we have already shown that the lift is unique up to a pointed homotopy we get that $(\tau\sigma)^{*} = \sigma^{*}\tau^{*}$ in $\textnormal{Ho}(\mathbf{Top_*})$. For $S' \subset S$ let $\sigma': S' \to T'$ be the bijection given by the restriction of $\sigma: S \to T$. By construction, $\phi_{S'}^{S} \circ \sigma^{*} = (\sigma')^{*} \circ \phi_{T'}^{T}$ since we can use $\gamma_\sigma$ to define $(\sigma')^{*}$. Therefore, $\SL$ can be promoted to a functor $\FI^\op \to \textnormal{Ho}(\mathbf{Top_*})$.
\end{proof}
\begin{proposition}\label{cns:fis_sl}
  There exist structure maps
  \[
  \psi^{T}_{S}: \SL(S) \to \SL(T) \text{ for } S \subset T 
  \]
  such that homotopy $\FI^{\op}$ space $\SL$ can be promoted to a homotopy $\FIs$ space $\SL: \FIs \to \textnormal{Ho}(\mathbf{Top_*})$.
\end{proposition}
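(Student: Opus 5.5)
We construct maps $\psi^{T}_{S}\colon \SL(S) \to \SL(T)$ that add new standard components in the region left free by a compactly supported embedding, and then verify the $\FIs$ relations up to pointed homotopy.

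\textbf{Construction of $\psi^T_S$.} Let $f \in \SL(S)$. Then $f$ coincides with $\iota$ outside a compact set $K \subset \Rm \times S$. Its image is therefore contained in the union of $\iota(\Rm \times S)$ and a compact set. We would like to set $\psi^T_S(f) = f \sqcup \iota|_{\Rm \times (T\setminus S)}$. However, the added sheets $\Rm\times\{\frac1t\}$ may intersect $f(K)$.

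To remove such intersections, first conjugate $f$ by a shrinking. Choose an isotopy, depending continuously on $f$, from $f$ to an embedding whose non-standard part lies in a small region, far from the hyperplanes $\R^m\times\{\frac1t\}\times 0$ for $t\in T\setminus S$. This can be done in two steps.

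\begin{enumerate}
\item Push the support of $f-\iota$ off to a far-away region. Concretely, precompose and postcompose with the translation $x\mapsto x+R e_1$ of $\Rm$. This preserves the boundary condition, because $\iota$ is invariant under translations in the $\Rm$ directions.
\item Note that $f(K)$ is compact, so it meets only a bounded region. The sheets for $t\notin S$ pass through this region in $\R^{m+1}\subset\R^d$.
\end{enumerate}

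A cleaner approach is to move the new sheets in the normal directions. Since $d-m\ge 3$, there is a bump in the $(m+2)$-nd coordinate. Define
\[
\psi^T_S(f) = f \sqcup \bigl(\iota_{t} + \rho(x)\, c\, e_{m+2}\bigr)_{t\in T\setminus S},
\]
where $\rho$ is a bump function equal to $1$ on the projection of $f(K)$ and $c$ is larger than the diameter of $f(K)$. This is compactly supported and lands in $\SL(T)$, since it is isotopic to $\iota$ when $f=\iota$. The choices of $\rho$ and $c$ form a contractible (convex-type) space, so $\psi^T_S$ is well-defined in $\textnormal{Ho}(\mathbf{Top}_*)$. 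It can be made continuous locally, by choosing $\rho$ and $c$ on compact families. Alternatively, we can replace $\SL(S)$ by the weakly equivalent subspace of embeddings supported in $\{|x|\le R\}$, with $R$ varying as a colimit. The map then sends $\iota$ to $\iota$ when $\rho$ is chosen canonically, for example $c$ depending continuously on $f$, so it is pointed.

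\textbf{Relations.} We must check the $\FIs$ identities in the homotopy category, using the decomposition of partial injections into projections, bijections and inclusions.

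\begin{enumerate}
\item $\phi^T_S\psi^T_S=\operatorname{id}$. This holds on the nose, because forgetting the added sheets returns $f$.
\item $\psi$ is functorial under composition: $\psi^U_T\psi^T_S\simeq\psi^U_S$. This follows from the contractibility of the choices of push-off.
\item $\psi$ commutes with $\phi$ when $S,T'$ are compatible, i.e. $\phi^{T}_{T'}\psi^{T}_{S}=\psi^{T'}_{S\cap T'}\phi^{S}_{S\cap T'}$. This holds on the nose from the formula, modulo the choice of bump. This is the relation governing composites of a projection with an inclusion in $\FIs$.
\item $\psi$ is compatible with the bijection maps $\sigma^*$ of \cref{cns:fiop_sl}. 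To check this, choose the lift $\widetilde\gamma$ of the path of points in $\textnormal{Conf}(r,\R^{d-m})$. It extends to the larger configuration including the new points, and we use the evaluation fibration argument from the proof of \cref{cns:fiop_sl}: a diffeomorphism of $\R^{d-m}$ fixing the extra points up to isotopy carries the pushed-off sheets to pushed-off sheets.
\end{enumerate}

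\textbf{Main obstacle.} The main difficulty is this last compatibility with $\sigma^*$, together with making all the homotopies pointed. We handle it by again using simple connectivity of $\textnormal{Conf}(r,\R^{d-m})$ for $d-m\ge3$ to connect different choices of lift. We also use the fact that the space of admissible bumps $(\rho,c)$ is contractible, which gives uniqueness up to pointed homotopy.
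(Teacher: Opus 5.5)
\textbf{Gap: compatibility of $\psi$ with the bijection maps.} Your construction differs from the paper's. You leave $f$ alone and push the new sheets up in the $e_{m+2}$-direction over $\operatorname{pr}(f(K))$. This gives $\phi^T_S\psi^T_S=\operatorname{id}$ on the nose and makes your relations (2) and (3) easy by convexity of the push profiles. The gap is the step you yourself flag as the main obstacle, and it is not resolved.

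Already the basic $\FI$ requirement fails to be established: a bijection $\tau$ of $T$ fixing $S$ pointwise (e.g.\ the transposition of two added sheets) must satisfy $\tau^*\psi^T_S\simeq\psi^T_S$. Without this the assignment is not a functor on $\FIs$, since $\tau\circ(S\hookrightarrow T)=(S\hookrightarrow T)$ there.

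\begin{itemize}
\item The map $\tau^*$ post-composes \emph{everything} with the fixed diffeomorphism $\operatorname{id}_{\Rm}\times\widetilde\gamma_\tau(1)$. This moves the points $\frac1t$, $t\in T\setminus S$, and acts on every fibre $\{x\}\times\R^{d-m}$, including those over $\operatorname{pr}(f(K))$.
\item Since $f(K)$ can sit anywhere and be arbitrarily large, you cannot choose $\widetilde\gamma_\tau$ supported away from it. So $\tau^*\psi^T_S(f)$ alters $f$ itself, and replaces your vertically translated sheets by their images under a nonlinear diffeomorphism.
\item Your claim that such a diffeomorphism ``carries pushed-off sheets to pushed-off sheets'' does not hold in general. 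Contractibility of the space of $(\rho,c)$ does not apply, because the result is no longer of the form $\iota_t+\rho c\,e_{m+2}$.
\item Simple connectivity of $\textnormal{Conf}(r,\R^{d-m})$ only gives independence of the lift, which is already proved in \cref{cns:fiop_sl}. It does not give compatibility of $\sigma^*$ with $\psi$.
\end{itemize}
A repair within your framework seems possible, e.g.\ a compactly supported fibrewise isotopy plus a radial braid argument in $\textnormal{Conf}(k,\R^{d-m}\setminus\{\text{pts}\})$. But that is a genuine additional argument, not present in the proposal.

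\textbf{Secondary issue: continuity.} Your $\psi^T_S$ uses $f$-dependent choices of $(\rho,c)$. Neither ``locally continuous'' nor ``contractible space of choices'' by itself yields a continuous pointed map. You would need, for instance, a zigzag through the space of pairs $(f,(\rho,c))$ that is proved weakly equivalent to $\SL(S)$.

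\textbf{How the paper avoids both problems.} It does the opposite: it moves $f$ out of the way. It fixes once and for all a diffeomorphism $\Psi_r\colon\R^{d-m}\xrightarrow{\sim}\{x_1>\frac{2}{2r+1}\}$, isotopic to the identity rel the points $(\frac1i,0,\dots,0)$ for $i\le r$. It then sets $\psi_r(f)=((\operatorname{id}_{\Rm}\times\Psi_r)\circ f,\iota)$, with the new sheet in standard position.
\begin{itemize}
\item This is a single continuous pointed map with no $f$-dependent choices.
\item $\phi^{\underline{r+1}}_{\underline r}\psi_r\simeq\operatorname{id}$ holds via the relative isotopy, so only up to homotopy, unlike in your construction.
\item Because the image of every $f$ is squeezed into a half-space, the lift for the transposition $(r+1\;r+2)$ of the new sheets can be chosen supported in the complementary region, where $f$ never goes. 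This makes $\sigma^*\psi_{r+1}\psi_r\simeq\psi_{r+1}\psi_r$ immediate.
\end{itemize}
The general $\psi^T_S$ and $f_*$ are then defined through bijections with $\underline{n}$ and the decomposition of partial injections.
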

\begin{proof}
  We start by defining the maps $\psi_r: \embci{\Rm \times \underline{r}} \to \embci{\Rm \times \underline{r+1}}$ adding a copy of $\Rm$.
  Let
  \[
  \Psi_r: \R^{d-m} \xrightarrow{\sim} \{(x_1, \dots, x_{d-m}) \in \R^{d-m} \mid x_1 > \frac{2}{2r+1}\}
  \]
  be an diffeomorphism isotopic to the identity $\operatorname{id}_{\R^{d-m}}$ via a relative isotopy of embeddings fixing $(\frac{1}{i},0,\dots,0)$ for all $1 \leq i \leq r$. We define
  \[
  \psi_r (f) = ((\operatorname{id}_{\Rm} \times \Psi_r) \circ f, \iota),
  \]
  where $\iota(x) = (x, \frac{1}{r+1}, 0, \dots, 0)$. For $\sigma = (r+1 \; r+2) \in \Sigma_{r+2}$ we can choose a lift $\widetilde{\gamma}_\sigma$ whose support lies inside $\{(x_1, \dots, x_{d-m}) \in \R^{d-m} \mid x_1 \leq \frac{2}{2r+1}\}$. This shows that $\sigma^{*} \psi_{r+1} \psi_r \simeq \psi_{r+1} \psi_r$. A relative isotopy between $\Psi_r$ and $\operatorname{id}_{\R^{d-m}}$ induces a pointed homotopy between an endomorphism $\phi^{\underline{r+1}}_{\underline{r}} \psi_r$ of $\embci{\Rm \times \underline{r}}$ and the identity, hence, $\phi^{\underline{r+1}}_{\underline{r}} \psi_r = \operatorname{id}$. It is also clear that
  \[
  \phi^{\underline{r+1}}_{\underline{r+1}\setminus\{r\}} \psi_{r} \simeq \sigma^{*} \psi_{r-1} \phi^{\underline{r}}_{\underline{r-1}}
  \]
  where $\sigma: \underline{r+1}\setminus\{r\} \xrightarrow{\sim} \underline{r}$ is the obvious bijection sending $r+1$ to $r$.

  For $S \subset T$ there exists a bijection $\tau: T \xrightarrow{\sim} \underline{\lvert T \rvert}$ such that it restricts to a bijection $\sigma: S \xrightarrow{\sim} \underline{\lvert S \rvert}$. We set
  \[
  \psi^{T}_{S} \coloneq \tau^{*} \psi_{\lvert T \rvert-1} \dots \psi_{\lvert S \rvert} (\sigma^{*})^{-1}.
  \]
  Any map $f: S \to T$ in $\FIs$ can be uniquely decomposed as
  \[
  \begin{tikzcd}
    S \arrow[r, "f"] \arrow[d, dashed] & T \\
    \operatorname{dom} f \arrow[r, "\sim"] & \im f \arrow[u, hook]
  \end{tikzcd}
  \]
  where $\operatorname{dom} f$ is the domain of $f$, i.e. the set of elements on which $f$ is defined. We set
  \[
  f_{*} \coloneq \psi^{T}_{\im f} \circ (\sigma^{*})^{-1} \circ \phi^{S}_{\operatorname{dom} f}
  \]
  where $\sigma$ is the induced bijection between $\operatorname{dom} f$ and $\im f$. It is straightforward to check that this is well-defined.
\end{proof}

Another important property of the spaces of string links is that they are loop spaces.
\begin{lemma}\label{cns:loop_space}
  The space of string links $\SL(S)$ is a loop space for any finite $S \subset \Np$. Furthermore, the maps $\phi^{T}_{S}: \SL(T) \to \SL(S)$ are loop space maps.
\end{lemma}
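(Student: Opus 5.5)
We realise $\SL(S)$ as a loop space by treating the first string coordinate as a ``time'' direction, in the spirit of the classical argument that long knots form an $E_1$-algebra. Write $\Rm = \R \times \R^{m-1}$ and let $x_1$ be the first coordinate. Consider the space $\mathcal{E}(S)$ of embeddings $f\colon \Rm \times S \to \R^{d}$, in the component of $\iota$, with the following property: for some $a\in\R$, $f$ agrees with $\iota$ whenever $x_1 \leq a$, and $f$ is a ``translated copy'' of $\iota$ whenever $x_1 \geq b$ for some $b$. Concretely, the allowed behaviour for $x_1\geq b$ is given by a fixed one-parameter family of standard embeddings. Then $\SL(S)$ is the subspace of $\mathcal{E}(S)$ where the end behaviour is trivial.

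A cleaner route is to use Budney-type ``fat'' long links. Let $\mathcal{F}(S)$ be the space of pairs $(f,t)$ with $t\geq 0$ and $f \in \SL(S)$ having support in the slab $[0,t] \times \R^{m-1} \times \R^{d-m}$ in the $(x_1,\dots)$ coordinates. Here we use the coordinates $\R^{d} = \Rm \times \R^{d-m}$ and embeddings of the form $\iota + $ compactly supported perturbation. This $\mathcal{F}(S)$ is homotopy equivalent to $\SL(S)$: translating and rescaling the support gives a deformation retraction onto the slab-supported subspace. Concatenation in the $x_1$-direction makes $\mathcal{F}(S)$ a strictly associative topological monoid with unit $(\iota,0)$.

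The key steps, in order, are as follows.

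\begin{enumerate}
\item Construct the monoid $\mathcal{F}(S)$ and show $\mathcal{F}(S)\simeq \SL(S)$.
\item Check that $\pi_0\mathcal{F}(S)$ is a single point, since we restrict to the component of $\iota$. A connected topological monoid is grouplike, so $\mathcal{F}(S)\simeq \Omega B\mathcal{F}(S)$.
\item Observe that the restriction maps $\phi^T_S$, which forget the strands indexed by $T\setminus S$, commute strictly with concatenation and preserve the slab support. They are therefore monoid homomorphisms $\mathcal{F}(T)\to\mathcal{F}(S)$, and applying $\Omega B$ turns them into loop maps compatible with the equivalences of step 2.
\end{enumerate}

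Step 2 has one subtlety. Each strand $\Rm\times\{1/i\}$ must be shifted consistently, and this works because every strand is parallel to the same $x_1$-axis.

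The main obstacle is the precise point-set setup of step 1. Concatenation of two slab-supported embeddings must again be an embedding. This is clear because on each slab the map equals the translated piece and the pieces are disjoint, with $\iota$ in between. The real work is showing that the inclusion of slab-supported links into $\SL(S)$ is a homotopy equivalence. For this I would first push the compact support of $f$ into a region $x_1\in[0,t]$ by conjugating with the diffeomorphism $x_1\mapsto x_1+c$ applied to both source and target. This conjugation preserves $\iota$, since $\iota$ is translation invariant in $x_1$. I would then note that an arbitrary compact support is contained in some slab $\{|x_1|\leq R\}$, because the support is bounded in all directions. Choosing $R$ continuously, for example via a supremum made continuous by a standard partition-of-unity argument, gives the retraction. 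All constructions commute with forgetting strands, which yields naturality in the $\cube^{\op}$ structure.
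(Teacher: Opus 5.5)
Your argument is correct in outline, but it takes a genuinely different route from the paper.

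\textbf{Comparison with the paper.} The paper passes to the cube model $\textnormal{Emb}_c(I^m\times\underline{r}, I^d)_\iota$. It lets the little cubes operad $\C_m$ act by squeezing embeddings with $c_i\times\operatorname{id}_{I^{d-m}}$ into disjoint cubes in all $m$ string directions. The Boardman--Vogt--May recognition principle then makes the connected component an $m$-fold loop space, and forgetting strands is visibly a map of $\C_m$-algebras.

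You use only the direction $x_1$ together with a Moore-type length parameter. This gives a strictly associative monoid, which you deloop with the classical equivalence $M\simeq\Omega BM$ for grouplike monoids. Naturality is immediate because the restrictions are strict monoid homomorphisms. Your route is more elementary and yields exactly the single delooping the paper uses later: abelian $\pi_1$ in \cref{prop:rational_les}, and the H-space description of $H^*(\SL)$ in \cref{thm:cohomology_sl}. The paper's route yields the stronger $E_m$-structure, at the price of the operadic recognition principle. Both routes rely on comparing a support-controlled model with $\SL(S)$, which the paper simply asserts.

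\textbf{Two technical points in your step 1 need repair.}

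First, the definition of $\mathcal{F}(S)$ must constrain the image. In positive codimension, agreeing with $\iota$ off $[0,t]\times\R^{m-1}\times S$ does not force the middle part of $f$ into the slab. (In Budney's codimension-zero setting, injectivity forces this automatically.) So the disjointness you invoke for concatenation has to be built into the definition: require $f$ to map $\{0<x_1<t\}$ into the \emph{open} slab $(0,t)\times\R^{d-1}$. A closed slab is not enough, since consecutive pieces could meet on the common wall $\{x_1=t\}$.

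Second, a continuous choice of $R$ does not exist. In any of the usual topologies, an arbitrarily small perturbation can add a bump arbitrarily far out. So the support radius is not locally bounded above, and no partition-of-unity argument produces a global deformation retraction. You do not need one. In the colimit topology every compact family has uniformly bounded support. Translating in $x_1$ and enlarging $t$ uniformly on compact families therefore shows that $(f,t)\mapsto f$ gives a weak equivalence $\mathcal{F}(S)\to\SL(S)$. That is all the loop-space conclusion needs; the paper's conclusion also holds only up to weak equivalence.

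Note that this comparison is also what makes $\mathcal{F}(S)$ connected in your step 2. Connectedness does not follow from the definition alone, since slab-supported links isotopic in $\SL(S)$ need not be isotopic through slab-supported ones a priori.

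The opening paragraph about $\mathcal{E}(S)$ plays no role and can be dropped.
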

\begin{proof}
  We use the recognition principle of \cite{BV1968}, \cite{May1972}.

  Let $I$ denote the open interval $(0,2)$. We write $\textnormal{Emb}_c(I^m \times \underline{r}, I^d)$ for the space of smooth embeddings of $r$ copies of an $I^m$ into a $I^d$ coinciding with a fixed embedding near boundary. We further assume that it sends interior of $I^m \times \underline{r}$ into interior of $I^d$. We write $\textnormal{Emb}_c(I^m \times \underline{r}, I^d)_\iota$ to denote the connected component containing the fixed standard embedding. The obvious inclusion $\textnormal{Emb}_c(I^m \times \underline{r}, I^d) \hookrightarrow \embc{\Rm \times \underline{r}}$ is a weak equivalence. In particular, its restriction $\textnormal{Emb}_c(I^m \times \underline{r}, I^d)_{\iota} \hookrightarrow \SL(\underline{r})$ is also a weak equivalence.

  Let $\C_m$ denote the little $m$-cubes operad. For an element $c: I^m \to I^m$ of $\C_m(1)$ we write $\tilde{c}: I^d \to I^d$ to denote $c \times \operatorname{id}_{I^{d-m}}$. For an element $c = (c_1,\dots,c_k)$ of $\C_m(k)$, an index $1 \leq j \leq r$ and maps $f_1,\dots,f_k: I^m \to I^d$ we define $c(f_1,\dots,f_k): I^m \to I^d$ by
  \[
  c^j(f_1,\dots,f_k)(x) =
  \begin{cases}
    f_i(c_i^{-1}(x)), & \text{if } x \in \operatorname{Im}(c_i) \text{ for some } 1 \leq i \leq k; \\
    \iota_j(x), & \text{otherwise}.
  \end{cases}
  \]
  Here $\iota_j$ denotes the restriction of $\iota$ to the $j$-th copy of $I^m$.
  The $\C_m$-algebra structure on $\textnormal{Emb}_c(I^m \times \underline{r}, I^d)$ is given by the maps
  \begin{align*}
    \C_m(k) \times \textnormal{Emb}_c(I^m \times \underline{r}, I^d)^{\times k} &\to \textnormal{Emb}_c(I^m \times \underline{r}, I^d) \\
    ((c_1,\dots,c_k),(f_1^1,\dots,f_r^1),\dots,(f_1^k,\dots,f_r^k)) &\mapsto (c^1(\tilde{c}_1f_1^1,\dots,\tilde{c}_kf_1^k),\dots,c^r(\tilde{c}_1f_r^1,\dots,\tilde{c}_kf_r^k))
  \end{align*}
  This structure restricts to the connected component $\textnormal{Emb}_c(I^m \times \underline{r}, I^d)_{\iota}$, hence, the latter is an $m$-fold loop space. Furthermore, the restriction maps $\phi_r: \textnormal{Emb}_c(I^m \times \underline{r+1}, I^d)_{\iota} \to \textnormal{Emb}_c(I^m \times \underline{r}, I^d)_{\iota}$ are $\C_m$-algebra maps.
\end{proof}

Similarly to \cref{def:sl} we define the objects $\iSL(-)$ and $\oSL(-)$.
\begin{definition}
  Let $S$ be a finite subset of $\Np$. We define
  \[
  \iSL(S) \coloneq \immci{\Rm \times S}
  \]
  to be the connected component of the space of smooth immersions of $\Rm \times S$ into $\R^{d}$ which contains $\iota$. It is pointed at $\iota$. We also define
  \[
  \oSL(S) \coloneq \oembci{\Rm \times S}
  \]
  to be the connected component of the base point of the homotopy fiber of the tautological inclusion $\embc{\Rm \times S} \to \immc{\Rm \times S}$.
\end{definition}
\begin{remark}
  Remember that we have fixed a model for the homotopy fiber of a map $f: (X,x_{0}) \to (Y,y_{0})$ by setting
  \[
  \operatorname{hofib} f = \{(x, \gamma) \in X \times Y^{[0,1]} \mid x \in X; \, \gamma:[0,1] \to Y, \gamma(0)=f(x), \gamma(1)=y_{0}\}.
  \]
  We also assume that it is equipped with a base point $(x_{0}, c_{y_{0}})$, where $c_{y_{0}}$ denotes the constant path at $y_{0}$.
\end{remark}
\begin{remark}\label{rmk:cns_sl}
  The maps outlined in \cref{cns:cube_sl,cns:fiop_sl,cns:fis_sl} can be analogously defined for $\iSL(-)$ and $\oSL(-)$. This defines homotopy $\FIs$ spaces $\iSL$ and $\oSL$. Moreover, \cref{cns:loop_space} can be adapted to show that the spaces $\iSL(S)$ and $\oSL(S)$ are also loop spaces and the maps $\phi^{T}_{S}$ are also the loop space maps.
\end{remark}
For any finite set $S \subset \Np$ there is a sequence
\begin{equation}\label{eq:seq_sl}
  \oSL(S) \to \SL(S) \to \iSL(S)
\end{equation}
where the first map is the restriction of the projection and the second map is the tautological inclusion. It is clear that this sequence is natural with respect to the $\FIs$ structure.
\begin{proposition}\label{prop:rational_les}
  The sequence of $\FIs$-modules
  \[
  \pi^\field_{n}\oSL \to \pi^\field_{n}\SL \to \pi^\field_{n}\iSL
  \]
  associated to \eqref{eq:seq_sl} is exact for every $n\geq1$.
\end{proposition}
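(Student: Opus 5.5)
The plan is to deduce exactness from the long exact sequence of homotopy groups of the homotopy fiber sequence underlying \eqref{eq:seq_sl}, and then to pass to rationalisations. The subtleties are that $\oSL(S)$ is only one connected component of the homotopy fiber, and that in low degrees the groups may be non-abelian. The loop space structure from \cref{cns:loop_space} and \cref{rmk:cns_sl} handles the second issue.

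\textbf{Step 1: the fiber sequence.} For each $S$, let $F(S)$ be the homotopy fiber of $\embc{\Rm \times S} \to \immc{\Rm \times S}$, computed over the component of $\iota$ in the base. Restricting the total space to the component $\SL(S)$ gives a homotopy fiber sequence $F'(S) \to \SL(S) \to \iSL(S)$. Here $F'(S)$ is the union of those components of $F(S)$ whose embedding lies in the component of $\iota$, and $\oSL(S)$ is the component of $F'(S)$ containing the base point. The long exact sequence gives exactness of
\[
\pi_n(F'(S)) \to \pi_n \SL(S) \to \pi_n \iSL(S)
\]
for $n \geq 1$. Since $\pi_n$ for $n\ge 1$ only sees the base-point component, we have $\pi_n F'(S) = \pi_n \oSL(S)$, so the sequence $\pi_n\oSL(S)\to\pi_n\SL(S)\to\pi_n\iSL(S)$ is exact. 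This step needs care: exactness at $\pi_n\SL$ for $n\geq 1$ involves only the base-point component of the fiber, so it holds as stated.

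\textbf{Step 2: rationalisation.} All three spaces are loop spaces, in fact $m$-fold loop spaces with $m\geq1$ by \cref{cns:loop_space} and \cref{rmk:cns_sl}. Consequently each is a simple space, and $\pi_1$ is abelian. The first map is a loop map, being the restriction of a projection compatible with the little cubes actions, and the second map is evidently a loop map. Hence all groups appearing are abelian, and $\pi_n^\field$ is simply $\pi_n\otimes\field$ in every degree; the Malcev completion of an abelian group is its rationalisation. Since $-\otimes\field$ is exact on abelian groups, the exact sequence of Step 1 remains exact after tensoring with $\field$.

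\textbf{Step 3: naturality.} The maps in \eqref{eq:seq_sl} commute in $\textnormal{Ho}(\mathbf{Top_*})$ with the $\FIs$ structure maps. Therefore the maps $\pi_n^\field$ form morphisms of $\FIs$-modules, and exactness is checked objectwise, which was done in Steps 1 and 2.

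The main obstacle I anticipate is Step 1's bookkeeping around $\pi_0$ and the component choice, together with the need for abelianness in degree $1$. The loop space structure resolves the latter. If verifying that the maps are loop maps on $\pi_1$ turns out to be delicate, I would instead argue directly that every $\pi_1$ here is abelian, since each space is an H-space. Exactness of a sequence of abelian groups is then preserved by $\otimes\field$ regardless of whether the maps respect any further structure.
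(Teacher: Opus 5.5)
Your proposal is correct and follows the paper's proof: you take the long exact sequence of the homotopy fiber sequence on the relevant components, use the loop space structure from \cref{cns:loop_space} and \cref{rmk:cns_sl} to make all groups abelian, invoke exactness of rationalisation, and use naturality in the $\FIs$ structure maps. Your bookkeeping with $F'(S)$, and your remark that any homomorphisms between abelian groups suffice for flatness of $\field$ (so no loop-map compatibility is needed), only make explicit what the paper passes over quickly.
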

\begin{proof}
  By definition the sequence
  \[
  \oembc{\Rm \times S} \to \embc{\Rm \times S} \to \immc{\Rm \times S}
  \]
  is a homotopy fiber sequence for every finite $S \subset \Np$. The associated long exact sequence of homotopy groups is
  \begin{multline*}
    \dots \to \pi_{n+1}\iSL(S) \to \pi_{n}\oSL(S) \to \pi_{n}\SL(S) \to \pi_{n}\iSL(S) \to \dots \\
    \dots \to \pi_{1}\oSL(S) \to \pi_{1}\SL(S) \to \pi_{1}\iSL(S)
  \end{multline*}
  since the spaces $\oSL(S), \SL(S)$ and $\iSL(S)$ are the corresponding components of the spaces $\oembc{\Rm \times S}, \embc{\Rm \times S}$ and $\immc{\Rm \times S}$ respectively. It is natural with respect to the $\FIs$ structure maps. By \cref{rmk:cns_sl} all the spaces involved are loop spaces, hence, all the groups are abelian. In particular, the rationalisation
  \begin{multline*}
    \dots \to \pi^{\field}_{n+1}\iSL \to \pi^{\field}_{n}\oSL \to \pi^{\field}_{n}\SL \to \pi^{\field}_{n}\iSL \to \dots \\
    \dots \to \pi^{\field}_{1}\oSL \to \pi^{\field}_{1}\SL \to \pi^{\field}_{1}\iSL
  \end{multline*}
  is also exact.
\end{proof}

\subsection{Reformulation in terms of graph complexes}
\begin{definition}
  We write $\widetilde{H}^*((\mathbb{S}^m)^{\vee \bullet})$ to denote the dg $\FIs$-algebra of \cref{lem:ind_of_algebra} associated to cdga $\widetilde{H}^*(\mathbb{S}^m)$. We write $\slgraph$ to denote the $\FIs$ $\mathbf{L}_\infty$ algebra $(\textnormal{HGC}_{H((\mathbb{S}^m)^{\vee {\bullet}}),d})_{>0}$.
\end{definition}
\begin{remark}
  Note that the product in $\widetilde{H}^*(\mathbb{S}^m)$ is trivial. It implies that in $\slgraph(S)$ all the brackets vanish and, hence, it is just a differential graded vector space. 
\end{remark}
\begin{proposition}\label{Symmetrization statement}
  The graded $\cube^\op$-modules $\Res H_*(\slgraph)$ and $\Res \pi_*^\field \oSL$ are isomorphic.
\end{proposition}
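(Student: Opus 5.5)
The plan is to apply \cref{rational homotopy groups} to the open sets $U_S \coloneq \Rm \times S \subset \R^{m+1}$, using the identification $(x,i) \mapsto (x,\frac{1}{i})$ of \cref{Structure maps}. Strictly, $U_S$ is not an object of $\widetilde{\mathcal{O}}_c(\R^{m+1})$, since it is a union of hyperplanes and not open. So the first step is to thicken it. Let $U_S^\epsilon = \Rm \times \bigsqcup_{s\in S}(\frac{1}{s}-\epsilon_s, \frac{1}{s}+\epsilon_s)$, with the $\epsilon_s$ small and decreasing so that the intervals are disjoint. The complement of $U_S^\epsilon$ is not compact. I would therefore work throughout with the compactly supported models and take the thickenings inside a large ball, in the same way the paper passes between $I^m\times\underline{r}$ and $\Rm\times\underline{r}$ in \cref{cns:loop_space}.

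Restriction along the inclusions $U^\epsilon_S \subset U^\epsilon_T$ for $S\subset T$ is strictly functorial. By the tubular neighbourhood theorem, the restrictions $\oembc{U^\epsilon_S}\to\oembc{\Rm\times S}$ are weak equivalences compatible with the maps $\phi^T_S$ of \cref{cns:cube_sl}. Consequently the $\cube^\op$-space $\Res\,\oSL$ is weakly equivalent, as a strict $\cube^\op$-diagram of components, to $S\mapsto\oembci{U^\epsilon_S}$. Applying the naturality statement of \cref{rational homotopy groups} then gives an isomorphism of graded $\cube^\op$-modules
\[
\Res \pi_*^\field \oSL \simeq H_*\bigl((\textnormal{HGC}_{\Omega((U^\epsilon_S)_*),d})_{>0}\bigr).
\]

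Next I would identify the cdga. The one-point compactification $(U^\epsilon_S)_*$ is homotopy equivalent to $\bigvee_{s\in S}\mathbb{S}^m$, since each slab compactifies to a suspension-like copy of $\mathbb{S}^m$, all glued at $\infty$. A wedge of spheres is formal, and its cohomology is $\widetilde{H}^*(\mathbb{S}^m)^{\times S}$ with the augmentation ideal split as a direct sum over $S$. This is exactly $\widetilde{H}^*((\mathbb{S}^m)^{\vee S})$ from \cref{lem:ind_of_algebra}.

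The main obstacle is making the formality quasi-isomorphisms $\Omega((U^\epsilon_S)_*)\simeq \widetilde{H}^*((\mathbb{S}^m)^{\vee S})$ natural in $S\in\cube$. The restriction maps should correspond to the projections $\widetilde{H}^*(\mathbb{S}^m)^{\times T}\to\widetilde{H}^*(\mathbb{S}^m)^{\times S}$. To achieve this I would pick, for each $s$, a closed $m$-form $\omega_s$ representing the generator and supported in the $s$-th slab, for example a bump form in the normal coordinate pulled back along a compactly supported volume form. Sending the generator of the $s$-th summand to $\omega_s$ gives a cdga map from $\widetilde{H}^*(\mathbb{S}^m)^{\times S}$ (plus unit) into $\Omega((U_S^\epsilon)_*)$. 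It is a map of cdgas because products of the $\omega_s$ vanish for degree and support reasons. It is also strictly compatible with restriction, since $\omega_t$ restricts to zero on $U^\epsilon_S$ for $t\notin S$, and it is a quasi-isomorphism. Since $\textnormal{HGC}_{-,d}$ is functorial in the algebra and preserves quasi-isomorphisms, composing gives a zigzag of quasi-isomorphisms of $\cube^\op$-diagrams of $\mathbf{L}_\infty$ algebras. These induce an isomorphism on $H_*$ of the positive truncations, because the truncation only involves degrees $\geq 1$, and $\ker d$ in degree $1$ is preserved by quasi-isomorphisms on homology in positive degrees. The result is $\Res H_*(\slgraph)$.

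I expect some care to be needed about possible sign or orientation issues in matching the $\pi_1$ level (Malcev completion). Here the loop space structure from \cref{rmk:cns_sl} makes $\pi_1$ abelian, so the Malcev completion is just rationalisation, and this step should be harmless.
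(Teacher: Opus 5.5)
Your proposal is correct in outline and has the same skeleton as the paper: thicken $\Rm\times S$, restrict $\overline{\textnormal{Emb}}$ along the thickening, apply the natural rational-homotopy isomorphism, and compare the decorating cdgas naturally in $S$. There is one point to repair and one genuine difference of route.

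\textbf{The point to repair.} Your remedy for $U^\epsilon_S\notin\widetilde{\mathcal{O}}_c(\R^{m+1})$ does not work as phrased. Intersecting the slabs with a large ball never makes the complement compact; what is needed is that $U$ contain a neighbourhood of $\infty$. The paper therefore takes the union with an antiball,
\[
M_S=U^\epsilon_S\cup\bigl(\R^{m+1}\setminus\overline{B}(0,2)\bigr).
\]
The restriction $\oembci{M_S}\to\oembci{\Rm\times S}$ is still a weak equivalence by \cite[Proposition 2.2]{STT2018}, which is the precise form of your tubular-neighbourhood argument. Moreover $\hat M_S=M_S\cup\{\infty\}\subset\mathbb{S}^{m+1}$ is still $\simeq(\mathbb{S}^m)^{\vee S}$. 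Note that this is $U\cup\{\infty\}$ as a subspace of $\mathbb{S}^{m+1}$, not the one-point compactification of $U$.

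\textbf{The difference of route.} Your cdga comparison is more direct than the paper's. The paper uses the zigzag
\[
\overline{\Omega}^*(\hat M_S)\leftarrow\overline{\Omega}^*((\mathbb{S}^m)^{\vee S})\to\overline{\Omega}^*(\mathbb{S}^m)^{\times S}\leftarrow\bar A^{\times S}\to\widetilde H^*(\mathbb{S}^m)^{\times S},
\]
built from a formality model $\bar A$ of one sphere. You instead write down a single strict quasi-isomorphism $\field\oplus\widetilde H^*(\mathbb{S}^m)^{\times S}\to\Omega(\hat M_S)$. It is strictly compatible with restriction because the representatives have disjoint supports.

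This works, with two corrections to the construction. Take $\omega_s$ to be the pullback of $\rho\,dx_1\wedge\dots\wedge dx_m$ along the projection of the $s$-th slab to $\Rm$, extended by zero. Choose $\rho$ supported in a small ball, so that $\omega_s$ vanishes on the antiball. Do not include any cutoff in the normal coordinate: a factor $\beta(x_{m+1})$ would destroy closedness. With this choice, $\omega_s\wedge\omega_s=0$ because it is pulled back from a $2m$-form on $\Rm$. For $m=1$ the degree count on the $(m+1)$-dimensional slab alone would not give this, although graded commutativity does.

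Your route gives an explicit, strictly natural comparison. It avoids forms on a singular wedge and the auxiliary model $\bar A$. The paper's zigzag style is the one that generalises to the manifold-link section, where $M$ need not be formal and no strict cdga map out of $H^*(M)$ need exist.
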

\begin{proof}
  Let $S \subset \Np$ be a finite set and let
  \[
  M_S = \Biggl(\Rm \times \bigcup_{i \in S} \biggl(\frac{2i+1}{2i(i+1)}, \frac{2i+3}{2i(i+1)}\biggr)\Biggr) \cup \bigl(\R^{m+1} \setminus \overline{B}(0, 2)\bigr),
  \]
  where $\overline{B}(0, 2) \subset \R^{m+1}$ is the closed ball with center $0$ and radius $2$. For $S \subset T$ there is a map $\oembci{M_T} \to \oembci{M_S}$ induced by the restriction to $M_S \subset M_T$. Therefore, the spaces $\oembci{M_-}$ assemble together into a functor from $\cube^\op$ to $\mathbf{Top}_*$.

  Recall that we identify $\Rm \times S$ with a subspace of $\R^{m+1}$ so that $\Rm \times S$ is a subspace of $M_S$. By abuse of notation let $\iota_S: \Rm \times S \hookrightarrow M_S$ denote the subspace inclusion. The restriction map
  \[
  \iota_S^*: \oembci{M_S} \to \oembci{\Rm \times S}
  \]
  is a weak equivalence of pointed topological spaces by \cite[Proposition 2.2]{STT2018}. This map is natural in $S$, so it induces a natural weak equivalence of $\cube^\op$ spaces $\oembci{M_-}$ and $\Res \oSL$.

  By \cref{rational homotopy groups} there is a natural in $S$ isomorphism
  \[
  \pi_*^\field \overline{\textnormal{Emb}}_c(M_S, \R^d)_\iota \simeq H_* \bigl((\textnormal{HGC}_{\Omega(\hat{M}_S),d})_{>0}\bigr),
  \]
  where $\hat{M}_S = M_S \cup \{\infty\}$ is a subspace of the one point compactification $\R^{m+1} \cup \{\infty\}$ of $\R^{m+1}$. To finish the proof we show that the $\mathbf{L}_\infty$ $\cube^\op$-algebras $\textnormal{HGC}_{H((\mathbb{S}^m)^{\vee {\bullet}}),d}$ and $\textnormal{HGC}_{\Omega(\hat{M}_S),d}$ are quasi isomorphic. Since the spheres $\mathbb{S}^{m}$ are formal there is an augmented cdga $A$ and a zigzag of weak equivalences
  \[
  \overline{\Omega}^{*}(\mathbb{S}^{m}) \leftarrow \overline{A} \to \widetilde{H}^{*}(\mathbb{S}^{m}).
  \]
  Using this we write the following sequence of weak equivalences
  \[
  \overline{\Omega}^{*}(\hat{M}_S) \leftarrow \overline{\Omega}^{*}((\mathbb{S}^m)^{\vee S}) \to \overline{\Omega}^{*}(\mathbb{S}^m)^{\times S} \leftarrow \bar{A}^{\times S} \to \widetilde{H}^{*}(\mathbb{S}^{m})^{\times S} \leftarrow \widetilde{H}^{*}((\mathbb{S}^m)^{\vee S}),
  \]
  where the first map is induced by the deformation retraction of $\hat{M}_S$ onto $(\Rm \times S) \cup \{\infty\} \simeq (\mathbb{S}^m)^{\vee S}$, the second map is induced by inclusions $\mathbb{S}^m \to (\mathbb{S}^m)^{\vee S}$. This sequence is clearly natural with respect to the subset inclusions $S \subset T$. Therefore, the dg $\cube^{\op}$-algebras $\widetilde{H}^{*}((\mathbb{S}^m)^{\vee {\bullet}})$ and $\overline{\Omega}^{*}(\hat{M}_{\bullet})$ are weakly equivalent. The proof is finished since the Hairy Graph Complex construction is natural and preserves weak equivalences.
\end{proof}

\subsection{Representation stability}\label{Section on Representation Stability}

The aim of this subsection is to show that rational homotopy and cohomology groups of the spaces $\SL$ satisfy representation stability. We start by investigating $\slgraph$.

\begin{definition}
  Let $\slgraph^{\textnormal{surj}}$ be the dg $\FB$-submodule of $\Res \slgraph$ such that $\slgraph^{\textnormal{surj}}(\underline{r})$ is spanned by surjective homogeneous graphs with respect to the decomposition $\widetilde{H}((\mathbb{S}^m)^{\vee {\underline{r}}}) \simeq \widetilde{H}(\mathbb{S}^m)^{\bigoplus {\underline{r}}}$. In other words, $\slgraph^{\textnormal{surj}}(\underline{r})$ is the underlying dg module of $(\textnormal{HGC}_{H((\mathbb{S}^m)^{\vee \underline{r}}),d}^\textnormal{surj})_{>0}$.
\end{definition}
\begin{proposition}\label{Computation for graph complexes of string links}
  The $\FIs$-module $H_n(\slgraph)$ is finitely generated and generated in arity $\leq \frac{n+d-3}{d-m-2}$.
\end{proposition}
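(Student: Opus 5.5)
The plan is to exhibit $\slgraph$ as an induced dg $\FIs$-module, $\slgraph \simeq \Ind(\slgraph^{\textnormal{surj}})$, and then read off generation from the degree bound on surjective homogeneous graphs.

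\textbf{Step 1: splitting by support.} The augmentation ideal $\widetilde{H}^*((\mathbb{S}^m)^{\vee T}) = \widetilde{H}^*(\mathbb{S}^m)^{\oplus T}$ decomposes over $T$. Multilinearity in the decorations then splits every decorated graph into a sum of homogeneous graphs. Each homogeneous graph $G$ has a well-defined support $\im \phi_G \subset T$. This gives, for each finite $T$, a decomposition of graded vector spaces
\[
\slgraph(T) = \bigoplus_{S \subset T} \slgraph^{\textnormal{surj}}(S).
\]
We must check that this is a decomposition of complexes. The differential $\partial_A$ vanishes, since the differential of $\widetilde{H}$ is zero. The differential $\partial_{\textnormal{split}}$ preserves the set of decorations. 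The differential $\partial_{\textnormal{join}}$ multiplies decorations, and every such product vanishes because the product in $\widetilde{H}^*(\mathbb{S}^m)$ is trivial. Hence no term changes the support, and the splitting is one of complexes.

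The truncation $(-)_{>0}$ is compatible with the splitting. In degree $1$ it takes a kernel, and a kernel of a map preserving a direct sum decomposition splits accordingly.

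Next we verify that the $\FIs$ structure maps act as in the definition of $\Ind$. Inclusions $S \subset T$ extend decorations by zero, projections kill every graph whose support is not contained in the target subset, and bijections relabel. So $\slgraph \cong \Ind(\slgraph^{\textnormal{surj}})$ as dg $\FIs$-modules.

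\textbf{Step 2: homology.} By \cref{homology of the free functors},
\[
H_n(\slgraph) \cong \Ind\bigl(H_n(\slgraph^{\textnormal{surj}})\bigr).
\]
By \cref{Equivalence of FB and FI_sharp modules} we get $\mathbf{H}(H_n(\slgraph)) \cong H_n(\slgraph^{\textnormal{surj}})$. It therefore suffices to show that the $\FB$-module $H_n(\slgraph^{\textnormal{surj}})$ is finite-dimensional in each arity and vanishes in arity $r > \frac{n+d-3}{d-m-2}$.

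\textbf{Step 3: degree bound.} Let $G$ be a surjective homogeneous graph on $\underline{r}$. Then $|V^E_G| \geq r$. The decorating algebra is concentrated in degrees $\leq m$, and $d \geq m+3 \geq 4$, so \cref{Lower bound on degree of Hairy Graphs} applies and gives
\[
\deg G \geq (d-m-2)|V^E| - d + 3 \geq (d-m-2)r - d + 3.
\]
Consequently $\slgraph^{\textnormal{surj}}(\underline{r})_n = 0$ whenever $n < (d-m-2)r - d + 3$, that is, whenever $r > \frac{n+d-3}{d-m-2}$. Its homology then vanishes as well.

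Finite dimensionality comes from \cref{Only finite number of graphs of fixed degree}, which applies because $\widetilde{H}^*((\mathbb{S}^m)^{\vee r})$ is finite-dimensional. Together with the vanishing above, $H_n(\slgraph)$ is finitely generated in arity $\leq \frac{n+d-3}{d-m-2}$.

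\textbf{Main obstacle.} The delicate point is Step 1: showing that the support grading is respected by the full differential, including the $\mathbf{L}_\infty$-twisted parts. The argument rests on $\partial_{\textnormal{join}}$ vanishing because of the trivial cup product in $\widetilde{H}^*(\mathbb{S}^m)$, together with checking that projections really act as the zero map on summands whose support is not contained in the target. If the identification with $\Ind$ at chain level turns out to be awkward, a fallback is to argue at the level of $\cube^{\op}$-modules. One would show directly that the cross effect $\crosseffect(\slgraph)_T$ is the surjective part, use exactness of $\crosseffect$ (\cref{Exactness of functor homology}) to pass to homology, and conclude with \cref{lem:properties_cube}.
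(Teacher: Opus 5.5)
Your proposal is correct and follows the paper's argument: identify $\slgraph$ with $\Ind(\slgraph^{\textnormal{surj}})$, pass to homology via \cref{homology of the free functors}, and bound arities using \cref{Lower bound on degree of Hairy Graphs} and \cref{Only finite number of graphs of fixed degree}. The only difference is that you build the support splitting by hand and check that the differential preserves it, whereas the paper obtains $\slgraph \cong \Ind(\crosseffect(\slgraph))$ directly from the equivalence $\Ind \rightleftarrows \mathbf{H}$ together with $\crosseffect \cong \mathbf{H}$, and then only identifies $\crosseffect(\slgraph)$ with the surjective part, which is exactly your fallback.
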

\begin{proof}
  The dg $\FIs$-module $\slgraph$ is induced from the dg $\FB$-module $\crosseffect(\slgraph)$ by
  \cref{Equivalence of FB and FI_sharp modules,injective and projective homology}.
  A partial map $f:T \to T\setminus\{t\}$ acts on a homogeneous graph $G \in \slgraph(T)$ by projecting the decorations $\widetilde{H}(\mathbb{S}^m)^{\oplus T}$ onto $\widetilde{H}(\mathbb{S}^m)^{\oplus T\setminus\{t\}}$. Therefore, $\crosseffect(\slgraph)(T) = \slgraph^\textnormal{surj}(T)$ since $\slgraph(T)$ is spanned by homogeneous graphs. By \cref{homology of the free functors} the graded $\FIs$-module $H_*(\slgraph)$ is naturally isomorphic to $\Ind H_*(\slgraph^\textnormal{surj})$. To finish the proof it is enough to show that $(\slgraph^\textnormal{surj})_n$ is finitely generated in each arity and is concentrated in arities $\leq \frac{n+d-3}{d-m-2}$. The first part follows from Lemma \ref{Only finite number of graphs of fixed degree}. For the second part, notice that $\slgraph^\textnormal{surj}(\underline{r})$ is generated by graphs with at least $r$ external vertices and by \cref{Lower bound on degree of Hairy Graphs} the degree of such a graph is $\geq (d-m-2)r-d+3$.
\end{proof}
\begin{theorem}\label{thm:homotopy_sl}
  Let $d \geq m+3 \geq 4$. The $\FIs$-module $\pi_n^\field\SL$ is finitely generated and generated in arity $\leq \frac{n+d-3}{d-m-2}$ for any $n \geq 1$.
\end{theorem}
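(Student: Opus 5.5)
We will combine the fiber sequence \eqref{eq:seq_sl}, the graph-complex computation for $\oSL$, and an analysis of the immersion part. By \cref{Symmetrization statement} and \cref{Computation for graph complexes of string links}, the underlying $\cube^\op$-module of $\pi_n^\field\oSL$ is isomorphic to that of $H_n(\slgraph)$. The latter is finitely cogenerated in arity $\leq \frac{n+d-3}{d-m-2}$, because it is the restriction of a finitely generated $\FIs$-module. Since $\pi^\field_n\oSL$ is itself an $\FIs$-module (\cref{rmk:cns_sl}), \cref{lem:properties_cube} transfers the bound: $\pi_n^\field\oSL$ is finitely generated in arity $\leq \frac{n+d-3}{d-m-2}$. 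This is exactly why the paper introduced $\cube$: the isomorphism need not be $\Sigma$-equivariant, but generation can be read off after forgetting the symmetric group actions.

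Next we treat the immersion part. By Smale--Hirsch, $\immc{\Rm\times S}$ is weakly equivalent to $\Omega^m V_{d,m}^{\times S}$, or $\Omega^m \mathrm{SO}(d)/\mathrm{SO}(d-m)$ componentwise, for immersions standard at infinity. This equivalence is compatible with the $\cube^\op$ restriction maps, which become the projections. Hence $\Res\pi_n^\field\iSL \cong \Res\Ind(W)$, where $W = \pi_{n+m}^\field(V_{d,m})$ is placed in arity $1$. This $\cube^\op$-module is cogenerated in arity $\leq 1$, with finite-dimensional components. So $\pi_n^\field\iSL$ is finitely generated in arity $\leq 1$, again by \cref{lem:properties_cube}, and $1 \leq \frac{n+d-3}{d-m-2}$ for $n\geq 1$.

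Finally we apply \cref{prop:rational_les}. The long exact sequence gives, for each $n$, a short exact sequence of $\FIs$-modules
\[
0 \to \operatorname{coker}(\pi^\field_{n+1}\iSL \to \pi^\field_n\oSL) \to \pi_n^\field\SL \to \ker(\pi^\field_n\iSL \to \pi^\field_{n-1}\oSL) \to 0,
\]
where the right-hand term is replaced by the image in $\pi^\field_n\iSL$ when $n=1$. Exactness of this sequence of $\FIs$-modules relies on the sequence \eqref{eq:seq_sl} and the connecting maps being natural; we need the boundary map to be natural as well, and we will check that the homotopy $\FIs$ structure makes it so. We also use that the category of $\FIs$-modules is abelian. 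The cokernel term is a quotient of $\pi_n^\field\oSL$, and the kernel term is a submodule of $\pi_n^\field\iSL$. By \cref{lem:thick}, both are finitely generated in arity $\leq \frac{n+d-3}{d-m-2}$, and hence so is the middle term.

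The main obstacle is the second step: making the Smale--Hirsch identification natural enough to yield a $\cube^\op$-isomorphism. If that is awkward, we will instead argue arity by arity. The cross effect of $\pi^\field_n\iSL$ at $T$ is the kernel of all restrictions, and via the product decomposition of immersions of disjoint components into $\R^d$ (immersions do not interact) it vanishes for $|T|\geq 2$.
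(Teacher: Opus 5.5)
Your proposal is correct and follows essentially the same route as the paper. The $\cube^\op$-transfer of the graph-complex bound to $\pi_n^\field\oSL$ via \cref{lem:properties_cube}, arity-$1$ generation of $\pi_n^\field\iSL$ from the product decomposition of immersions (with finite-dimensionality from Smale--Hirsch and Serre's finiteness theorem), and \cref{lem:thick} along the fiber sequence are exactly its steps. The one simplification is that you do not need naturality of the connecting maps. The paper uses only exactness of $\pi_n^\field\oSL \to \pi_n^\field\SL \to \pi_n^\field\iSL$ at the middle term, so the image of the first map (a quotient of $\pi_n^\field\oSL$) and the image of the second (a submodule of $\pi_n^\field\iSL$) already give the required short exact sequence of $\FIs$-modules.
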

\begin{proof}
  Fix $n \geq 1$.
  By \cref{prop:rational_les} the sequence of $\FIs$-modules
  \[
  \pi_n^\field\oSL \to \pi_n^\field\SL \to \pi_n^\field\iSL
  \]
  is exact. In view of \cref{lem:thick} to prove the theorem it is enough to show that $\pi_n^\field\oSL$ and $\pi_n^\field\iSL$ satisfy it as well.

  The $\FIs$-module $H_n(\slgraph)$ satisfies the statement of the theorem (\cref{Computation for graph complexes of string links}), hence, so does $\pi_n^\field\oSL$ by \cref{lem:properties_cube,Computation for graph complexes of string links}. For the space of immersions we have that
  \[
  \iSL(S) \simeq \iSL(\underline{1})^{\times S}.
  \]
  In particular,
  \[
  \pi_n^\field\iSL(S) \simeq (\pi_n^\field\iSL(\underline{1}))^{\oplus S}.
  \]
  Therefore, the $\FIs$-module $\iSL$ is generated in arity $1$. To show that it is finitely generated note that it follows from the Smale--Hirsch theory \cite{Hir59} that there is a homotopy equivalence
  \[
  \immc{\Rm} \simeq \Omega^m V_m^d,
  \]
  where $V_m^d$ is the Stiefel manifold of $m$-frames in $\R^d$. By \cite{Ser53} the vector spaces $\pi_n^\field \Omega^m V_m^d$ are finite dimensional.
\end{proof}
\begin{theorem}\label{thm:cohomology_sl}
  Let $d \geq m+3 \geq 4$. The graded $\FIs$-module $H^*(\SL; \field)$ is of finite type with slope $\leq \frac{d-2}{d-m-2}$.
\end{theorem}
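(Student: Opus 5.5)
Since each $\SL(S)$ is a connected loop space (\cref{cns:loop_space}) and hence a nilpotent H-space, its rational cohomology is the free graded-commutative algebra on the dual of its rational homotopy:
\[
H^*(\SL(S);\field) \simeq \Lambda\bigl(\Hom(\pi_*^\field\SL(S),\field)\bigr),
\]
with generators in degree $n$ dual to $\pi_n^\field \SL(S)$, $n\geq 1$. I would deduce the theorem from \cref{thm:homotopy_sl} together with \cref{prop:algebra_slope}.

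\textbf{Step 1 (naturality).} Show that the isomorphism above is natural in $S$, so that $H^*(\SL;\field)$ is an $\FI$-algebra generated by a graded submodule $V$ with $V^n \simeq (\pi_n^\field\SL)^\vee$. Naturality in $S$ for the $\FIs$-maps is the main obstacle. The structure maps are only defined in $\textnormal{Ho}(\mathbf{Top}_*)$, and only the restriction maps $\phi^T_S$ are known to be loop maps. For an H-space, the generating subspace of cohomology is the space of primitives, and it is dual to rational homotopy via the Hurewicz map. I would therefore argue directly with the Hurewicz pairing: the rational Hurewicz map $\pi_*^\field \to H_*$ is natural for all maps, and the indecomposables $Q H^*$ are natural for all maps of spaces. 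For a nilpotent H-space, $QH^n \simeq \Hom(\pi_n^\field,\field)$ via this pairing.

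Hence $Q H^*(\SL;\field) \simeq (\pi^\field_*\SL)^\vee$ as graded $\FIs$-modules. By \cref{prop:properties_dual}, $(\pi_n^\field\SL)^\vee$ is finitely generated in arity $\leq \frac{n+d-3}{d-m-2}$. Choosing an $\FIs$-equivariant splitting is not quite available, but it suffices to take $V\subset H^*$ to be the $\FI$-submodule generated by lifts. Over $\field$, a finitely generated $\FI$-module surjection splits on generators, so $V$ can be chosen generated in the same arities.

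\textbf{Step 2 (slope bound).} For $n\geq 1$ I need
\[
\frac{n+d-3}{d-m-2} \leq n\,\frac{d-2}{d-m-2},
\]
which is equivalent to $d-3 \leq n(d-3)$, true since $n\geq 1$ and $d\geq 4$. So $V$ has slope $\leq \frac{d-2}{d-m-2}$. It also has $V^t=0$ for $t\le 0$ and is of finite type by \cref{thm:homotopy_sl}.

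\textbf{Step 3 (conclusion).} \cref{prop:algebra_slope} gives that $H^*(\SL;\field)$ is of finite type with slope $\leq \frac{d-2}{d-m-2}$ as an $\FI$-module. By the definition of finite generation for $\FIs$-modules via $\Res$, the same holds as an $\FIs$-module.

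If Step 1 proves troublesome, the fallback is to compare with $\oSL$ and $\iSL$ via the Serre spectral sequence, since simple connectivity of the base is not needed for loop spaces. I expect the Hurewicz/indecomposables route to suffice.
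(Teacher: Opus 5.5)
Your proof is correct, but it handles the naturality problem by a different route from the paper.

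\textbf{The paper's route.} The paper never makes the identification $H^*(\SL)\cong\operatorname{Sym}\bigl((\pi_*^\field\SL)^\vee\bigr)$ natural on all of $\FIs$. It applies \cref{prop:algebra_slope} to the free $\FIs$-algebra $\operatorname{Sym}\bigl((\pi_*^\field\SL)^\vee\bigr)$. It then observes that this free algebra and $H^*(\SL)$ have isomorphic underlying $\cube$-algebras, because the restriction maps $\phi^T_S$ are loop maps (\cref{cns:loop_space}). Finally it transfers the generation bounds via \cref{lem:properties_cube}, which detects them after forgetting the symmetric group actions and the maps $\psi^T_S$.

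\textbf{Your route.} You use that the Hurewicz pairing kills decomposables and is natural for arbitrary pointed maps. This gives $QH^*(\SL)\cong(\pi_*^\field\SL)^\vee$ as $\FIs$-modules, needing only the pointwise H-space structure for the objectwise isomorphism. You then lift a finite set of generators to an $\FI$-submodule $V$ that surjects onto the indecomposables in each arity, hence generates $H^*(\SL(S))$ as an algebra. You apply \cref{prop:algebra_slope} directly to the $\FI$-algebra $H^*(\SL)$.

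\textbf{Comparison.} Your argument needs no compatibility between the structure maps and the loop structures, and it bypasses the $\cube$ formalism. It only controls $H^*(\SL)$ through its indecomposables. The paper's argument uses that the $\phi^T_S$ are loop maps, and in return gets an isomorphism of $\cube$-algebras with the free algebra. It also illustrates the ``forget the symmetric group actions'' framework that the paper reuses for manifold links.

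\textbf{Two corrections.}
\begin{itemize}
\item The $\FIs$-equivariant splitting you say is ``not quite available'' does exist. $\FIs$-modules over $\field$ are equivalent to $\FB$-modules, i.e.\ to sequences of $\Sigma_n$-representations, so the category is semisimple. Hence $H^n(\SL)\to QH^n(\SL)$ splits, and $V$ can be taken to be an $\FIs$-submodule isomorphic to $(\pi_n^\field\SL)^\vee$.
\item The phrase ``a finitely generated $\FI$-module surjection splits on generators'' is inaccurate, since $\FI$-modules are not semisimple. What you actually use, and what suffices, is lifting a finite generating set.
\end{itemize}
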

\begin{proof}
  The $\FIs$-module $\pi_n^\field\SL$ is finitely generated and generated in arity $\leq \frac{n+d-3}{d-m-2}$ for any $n \geq 1$ by \cref{thm:homotopy_sl}. In particular, the graded $\FIs$-module $\pi_{*}^\field\SL$ is of finite type with slope $\leq \frac{d-2}{d-m-2}$ since
  \[
  \frac{n+d-3}{d-m-2} \leq n\frac{d-2}{d-m-2},
  \]
  for $n \geq 1$. We deduce that so is its dual $(\pi_{*}^\field\SL)^{\vee}$ (\cref{prop:properties_dual}) and, hence, the free graded symmetric $\FIs$-algebra $\operatorname{Sym}(\pi_{*}^\field\SL)^{\vee}$ generated by it (\cref{prop:algebra_slope}). In view of \cref{cns:loop_space} the underlying graded $\cube$-algebra of the latter is isomorphic to $\Res H^*(\SL)$. The proof is finished by \cref{lem:properties_cube}.
\end{proof}
\begin{proposition}\label{prop:bound_improvement_sl}
  Assume that the hypothesis of \cref{thm:cohomology_sl} holds. If $d \geq 2m+2$ then the slope can be improved to $\leq \frac{2}{d-2m-1}$.
\end{proposition}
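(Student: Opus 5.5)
The plan is to rerun the proofs of \cref{Computation for graph complexes of string links}, \cref{thm:homotopy_sl} and \cref{thm:cohomology_sl}. The only change is that the lower bound on the degree of a surjective homogeneous graph will come from \cref{degree of graph} instead of \cref{Lower bound on degree of Hairy Graphs}. The goal is to show that, for every $n \geq 1$, the $\FIs$-module $\pi_n^\field\SL$ is generated in arity $\leq \frac{2n}{d-2m-1}$. This is a linear bound with no constant term, so it directly gives slope $\leq \frac{2}{d-2m-1}$. Note that $d \geq 2m+2$ makes $d-2m-1 \geq 1$, so the bound is meaningful.

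\emph{Step 1 (graph complex).} As in the proof of \cref{Computation for graph complexes of string links}, $H_*(\slgraph) \simeq \Ind H_*(\slgraph^{\textnormal{surj}})$. A surjective homogeneous graph in $\slgraph^{\textnormal{surj}}(\underline{r})$ has $|V^E| \geq r$, and its decorations lie in $\widetilde{H}^*(\mathbb{S}^m)$, which is concentrated in degree $m$. Dropping the nonnegative term $\frac{d-3}{2}|V^I|$ in \cref{degree of graph} gives
\[
\deg G \geq \frac{d-2m-1}{2}\,|V^E| \geq \frac{d-2m-1}{2}\, r .
\]
Hence $(\slgraph^{\textnormal{surj}})_n$ vanishes in arities $r > \frac{2n}{d-2m-1}$. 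Finite generation is already known from \cref{Only finite number of graphs of fixed degree}. By \cref{Symmetrization statement} and \cref{lem:properties_cube}, $\pi_n^\field\oSL$ is therefore finitely generated in arity $\leq \frac{2n}{d-2m-1}$.

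\emph{Step 2 (immersions).} I expect this to be the one point needing care, since $\pi_n^\field\iSL$ is generated in arity $1$, and $1 \leq \frac{2n}{d-2m-1}$ can fail for small $n$. The fix is a connectivity argument. By Smale--Hirsch, $\pi_n \iSL(\underline{1}) \simeq \pi_{n+m} V^d_m$. The Stiefel manifold $V^d_m$ is $(d-m-1)$-connected, so this group vanishes unless $n+m \geq d-m$, that is, unless $n \geq d-2m$. Whenever $\pi_n^\field\iSL \neq 0$ we then have $n \geq d-2m > \frac{d-2m-1}{2}$, so $1 \leq \frac{2n}{d-2m-1}$. Thus $\pi_n^\field\iSL$ is also generated in arity $\leq \frac{2n}{d-2m-1}$.

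\emph{Step 3 (conclusion).} Apply \cref{prop:rational_les} and \cref{lem:thick} exactly as in \cref{thm:homotopy_sl}. This shows that $\pi_n^\field\SL$ is finitely generated in arity $\leq \frac{2n}{d-2m-1}$ for all $n\ge1$, so the graded module $\pi_*^\field\SL$ has slope $\leq \frac{2}{d-2m-1}$. Then follow the proof of \cref{thm:cohomology_sl} verbatim. Dualize using \cref{prop:properties_dual}, pass to the free graded symmetric algebra using \cref{prop:algebra_slope}, which preserves slope, and identify its underlying $\cube$-algebra with $\Res H^*(\SL)$ via the loop space structure of \cref{cns:loop_space}. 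Finally, \cref{lem:properties_cube} yields that $H^*(\SL;\field)$ has slope $\leq \frac{2}{d-2m-1}$.
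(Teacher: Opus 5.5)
Your proof is correct, but it follows a different route from the paper's.

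\textbf{The paper's route.} The paper does not revisit the graph complex. It keeps the per-degree bound $\frac{n+d-3}{d-m-2}$ from \cref{thm:homotopy_sl}. It then uses the $(d-2m-2)$-connectivity of $\embc{\R^{m}\times S}$, an external result of Koytcheff and Budney, to discard all degrees $n\leq d-2m-2$. Finally, it observes that $\frac{1}{n}\frac{n+d-3}{d-m-2}$ is decreasing in $n$, so the worst case is $n=d-2m-1$, where the value is exactly $\frac{2}{d-2m-1}$.

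\textbf{Your route.} You go back to \cref{degree of graph} and drop the internal-vertex term. This gives the homogeneous bound $\frac{2n}{d-2m-1}$ for $\pi_n^\field\oSL$ with no constant term, so no low-degree vanishing is needed for the fiber. Connectivity enters only for the immersion term $\pi_n^\field\iSL$, which is generated in arity $1$. There you need only the classical $(d-m-1)$-connectivity of $V^d_m$ via Smale--Hirsch, which the paper already invokes.

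\textbf{Comparison.} Your argument avoids the connectivity theorem for string link spaces entirely. The paper's argument keeps a sharper per-degree arity estimate. The two bounds agree at $n=d-2m-1$, where both equal $2$. For $n>d-2m-1$ one has $\frac{n+d-3}{d-m-2}<\frac{2n}{d-2m-1}$, since $d>3$. Both routes yield the same slope $\frac{2}{d-2m-1}$.
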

\begin{proof}
  We need to find $\kappa$ such that 
  \[
  \frac{n+d-3}{d-m-2} \leq n\kappa
  \]
  for all possible values of $n$. Since the spaces $\embc{\R^{m} \times S}$ are $(d-2m-2)$-connected (follows from \cite[Corollary 4.5, Theorem 4.8]{Koytcheff2025} and \cite[Proposition 3.9]{Budney2008}) it is enough to consider $n \geq d-2m-1$.
  The function 
  \begin{equation}\label{eq:slope}
    \frac{1}{n}\frac{n+d-3}{d-m-2}
  \end{equation}
  is decreasing with $n$, hence, we just consider the case $n = d-2m-1$. Plugging it into \eqref{eq:slope} we get
  \[
  \frac{1}{d-2m-1}\frac{2d-2m-4}{d-m-2} = \frac{2}{d-2m-1}. \qedhere
  \]
\end{proof}

\section{Links of Manifolds}
Let $d \geq 3$ and let $M$ be a closed smooth manifold of dimension together with an embedding $\iota: M \to \R^{d-2}$. In this section we show that the spaces $\embi{M \times S}$ assemble into a pointed homotopy $\FIs$ space $\ML$. Analogously to the previous section, we show that the rational homotopy/homology groups of the constructed spaces satisfy representation stability. We work over $\mathbb{Q}$.

\subsection{Structure maps}
Let $M$ be a closed manifold and let $\iota: M \to D^{d-2} \subset \R^{d-2}$ be a fixed embedding whose image is contained in the unit ball. For $s \in \Np$ we define a new embedding
\[
M \times \{s\} \to M \xrightarrow{\iota} D^{d-2} \to \R^{d-2},
\]
where the first map is the projection and the third map is the composition of scaling by $\frac{1}{2^{s+1}}$ and translating by $(0,\dots,0,\frac{1}{2^{s-1}})$. We identify $M \times \{s\}$ with its image under this embedding. Further, for a finite set $S \subset \Np$ we identify $M \times S$ with a subspace of $\R^{d-2}$ corresponding to $\coprod_{s \in S} M \times \{s\}$. By abuse of notation we write $\iota: M \times S \to \R^{d}$ to denote the composition of the subspace inclusion $M \times S \subset \R^{d-2}$ and the standard linear inclusion $\R^{d-2} \subset \R^{d}$.

Similarly to \cref{def:sl} we introduce the following definitions.
\begin{definition}
  For a finite set $S \subset \Np$ we define
  \[
  \ML(S) \coloneq \embi{M \times S}
  \]
  to be the connected component of the space of smooth embeddings of $M \times S$ into $\R^{d}$ which contains $\iota$. This space is pointed at $\iota$. Similarly, we define
  \[
  \iML(S) \coloneq \immi{M \times S}
  \]
  to be the corresponding component of the space of smooth immersions. It is also pointed at $\iota$. Finally, we define
  \[
  \oML(S) \coloneq \oembi{M \times S}
  \]
  to be the connected component of the base point of the homotopy fiber of the obvious inclusion $\ML(S) \to \iML(S)$.
\end{definition}
As in \cref{eq:seq_sl} for any finite set $S \subset \Np$ there is a sequence
\begin{equation}\label{eq:seq}
  \oML(S) \to \ML(S) \to \iML(S)
\end{equation}
where the first map is the restriction of the projection and the second map is the tautological inclusion.
\begin{definition}
  For finite sets $S, T \subset \Np$ such that $S \subset T$ we define maps
  \[
  \phi^{T}_{S}: \ML(T) \to \ML(S)
  \]
  given by precomposing with the subset inclusion $M \times S \subset M \times T$. By abuse of notation we also write $\phi^{T}_{S}$ for the corresponding maps $\oML(T) \to \oML(S)$ and $\iML(T) \to \iML(S)$. With respect to these maps the spaces $\ML(-)$ (resp., $\oML(-)$, $\iML(-)$) form an $\cube^{\op}$ space which we denote $\ML$ (resp., $\oML$, $\iML$).
\end{definition}
The maps of \eqref{eq:seq} are clearly natural. Hence, they assemble into a sequence of maps of $\cube^{\op}$ spaces $\oML \to \ML \to \iML$.
\begin{proposition}
  There exist structure maps
  \begin{itemize}
    \item $\sigma_{*}: \ML(S) \to \ML(S')$ for a bijection $\sigma: S \to S'$;
    \item $\psi^{T}_{S}: \ML(S) \to \ML(T)$ for $S \subset T$;
  \end{itemize}
  such that $\cube^{\op}$ space $\ML$ can be promoted to a homotopy $\FIs$ space, which by abuse of notation we also denote $\ML$. The same holds for $\oML$ and $\iML$. Furthermore, the maps in \eqref{eq:seq} are natural and produce a sequence
  \[
  \oML \to \ML \to \iML
  \]
  of maps of homotopy $\FIs$ spaces.
\end{proposition}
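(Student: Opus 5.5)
The construction follows the string link case (\cref{cns:fiop_sl,cns:fis_sl}); the role of the free directions $\R^{d-m}$ is now played by the two extra coordinates of $\R^{d} = \R^{d-2} \times \R^2$ together with ambient isotopies of $\R^{d}$.

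\textbf{Bijections.} For a bijection $\sigma: S \to S'$ we need a diffeomorphism of $\R^{d}$ carrying the copy $M \times \{s\}$ onto $M \times \{\sigma(s)\}$ in the standard way, up to the obvious identification of $M\times\{s\}$ with $M$. The copies are scaled and translated images of $\iota(M)$ inside pairwise disjoint small balls $B_s \subset \R^{d-2} \subset \R^d$. Consider the space of configurations of $|S|$ disjoint round balls in $\R^d$, or equivalently $\textnormal{Conf}(|S|, \R^d)$ with sufficiently small radii. Choose a path in it from the centres and radii indexed by $S$ to those indexed by $S'$ via $\sigma$. By the Palais--Cerf fibration theorem, the restriction $\textnormal{Diff}(\R^{d}) \to \textnormal{Emb}(\coprod_{S} D^d, \R^d)$ is a fibration, so the path lifts to an isotopy $\widetilde{\gamma}$ starting at the identity. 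Its endpoint $\widetilde{\gamma}(1)$ is a diffeomorphism that maps each ball $B_s$ onto $B_{\sigma(s)}$ by a composite of a translation and a scaling. We define
\[
\sigma_*(f) = \widetilde{\gamma}(1)\circ f \circ (\sigma^{-1}\text{ on the index}).
\]
This sends $\iota$ to $\iota$ and preserves components. Since $d \geq 3$, the space $\textnormal{Conf}(r,\R^d)$ is simply connected, so the relative lifting argument of \cref{cns:fiop_sl} shows that $\sigma_*$ is well defined up to pointed homotopy. The same argument gives functoriality in $\textnormal{Ho}(\mathbf{Top}_*)$ and compatibility with the restrictions $\phi^T_S$. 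Postcomposition with a diffeomorphism preserves both embeddings and immersions. On the homotopy fibre model it acts on paths pointwise, after concatenating with the path $\widetilde{\gamma}(t)\circ\iota$ to restore the basepoint condition. Hence the same formula works for $\iML$ and $\oML$ and commutes with the maps of \eqref{eq:seq}.

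\textbf{Adding a component.} To define $\psi_r:\ML(\underline r)\to\ML(\underline{r+1})$ we choose a diffeomorphism $\Psi_r$ of $\R^d$ onto an open half-space that avoids a neighbourhood of $B_{r+1}$. We require $\Psi_r$ to be isotopic to the identity through embeddings that fix a neighbourhood of $\bigcup_{i\le r} B_i$. Then we set $\psi_r(f) = (\Psi_r\circ f, \iota_{r+1})$. The result is again an embedding in the correct component, because the isotopy of $\Psi_r$ joins $(\Psi_r\iota,\iota_{r+1})$ to $\iota$. With these choices:
\begin{itemize}
\item $\phi\psi_r\simeq\operatorname{id}$ via the isotopy of $\Psi_r$;
\item the transposition $(r+1\;r+2)$ acts trivially up to homotopy on $\im\psi_{r+1}\psi_r$, by choosing $\widetilde\gamma$ supported away from the image of $\Psi_{r+1}\Psi_r$;
\item the mixed relation $\phi^{\underline{r+1}}_{\underline{r+1}\setminus\{r\}}\psi_r\simeq\sigma_*\psi_{r-1}\phi^{\underline r}_{\underline{r-1}}$ holds for the same reason.
\end{itemize}
We then define $\psi^T_S$ and general $f_*$ exactly as in \cref{cns:fis_sl}. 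The verification that these relations present $\FIs$ goes through verbatim.

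\textbf{Main obstacle.} The delicate point is $\oML$ and the naturality of \eqref{eq:seq}. Postcomposing with $\Psi_r$ moves the basepoint $\iota$ of $\iML$, so for the homotopy fibre we must append to each path $\gamma$ in $\iML$ the path $\Psi_r^t\circ\iota$ given by the isotopy. For the new component, whose immersion is already $\iota_{r+1}$, we use the constant path. We must then check that the resulting maps on hofibres are well defined up to pointed homotopy and compatible with the projection and inclusion in \eqref{eq:seq}. This is the step I expect to take the most care. It should follow because every structure map is induced by an ambient isotopy starting at the identity, hence acts compatibly on $\ML$, $\iML$ and the path spaces. I would first treat an arbitrary isotopy $h_t$ of $\R^d$ with $h_0=\operatorname{id}$ and $h_1\iota=\iota$, and verify that it induces a map of the whole fibre sequence \eqref{eq:seq}. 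All structure maps are then specialisations of this one construction.
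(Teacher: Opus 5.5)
Your proposal is correct and follows the paper's overall route. For $\sigma_*$ you use Palais--Cerf lifting over a simply connected base, for $\psi_r$ a $\Psi_r$ that fixes the first $r$ copies, and the rest adapts \cref{cns:fiop_sl,cns:fis_sl}. The one genuine difference is the evaluation fibration used for $\sigma_*$.

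\textbf{The fibration for $\sigma_*$.} The paper evaluates on the $(d-2)$-discs $\overline{D}^{d-2}\times S$. Their embedding space is equivalent to $\operatorname{Conf}^{(d-2)\text{-fr}}(S,\R^d)$, which is simply connected \emph{as a whole} because $V_{d-2}(\R^d)$ is $1$-connected, so any path may be used.

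You evaluate on full $d$-balls instead. The base $\operatorname{Emb}(\coprod_S D^d,\R^d)\simeq\operatorname{Conf}(S,\R^d)\times GL_d(\R)^S$ is \emph{not} simply connected for $d\geq 3$, since each ball contributes $\pi_1 SO(d)=\mathbb{Z}/2$. Your well-definedness argument works only because the paths, and the homotopies between them, lie in the translation-and-scaling subspace, which is $\simeq\operatorname{Conf}(S,\R^d)$. You should state this restriction explicitly. With arbitrary paths in that base, $\sigma_*$ would a priori be ambiguous up to postcomposition with a rotation twist around one ball, that is, up to the action of a loop spinning one copy of $M$ through a generator of $\pi_1 SO(d)$.

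With the restriction in place, everything goes through:
\begin{itemize}
\item concatenations and restrictions of round-ball paths are again round-ball paths, so functoriality and compatibility with $\phi^T_S$ hold;
\item a round-ball path restricts to a path of $(d-2)$-discs, so your $\sigma_*$ is one of the paper's admissible choices and the two constructions agree in $\textnormal{Ho}(\mathbf{Top}_*)$.
\end{itemize}
Your version needs only $\pi_1\operatorname{Conf}(S,\R^d)=0$. The paper's allows arbitrary paths, at the cost of using the connectivity of $V_{d-2}(\R^d)$.

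\textbf{The step you call the main obstacle is not one.} You require $\Psi_r$ and its isotopy to fix a neighbourhood of $\bigcup_{i\le r}B_i$, and $\widetilde{\gamma}(1)$ carries $\iota_S$ to $\iota_{S'}$ after reindexing. So every structure map is postcomposition with a diffeomorphism $D$ satisfying $D\circ\iota=\iota$; for $\psi_r$ one also adjoins $\iota_{r+1}$ with the constant path. Such a map acts on the hofibre model by $(x,\gamma)\mapsto(D\circ x,D\circ\gamma)$ and commutes strictly with both maps of \eqref{eq:seq}. This is all the paper means by ``the same holds for $\oML$ and $\iML$''.

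Drop the paths you propose to append. The path $\Psi_r^t\circ\iota$ is constant. For $\sigma_*$, the concatenation with $\widetilde{\gamma}(t)\circ\iota$ is not even well formed: $\widetilde{\gamma}(1)\circ\gamma$ already ends at $\iota_{S'}$, while $\widetilde{\gamma}(t)\circ\iota$ starts at the reindexed $\iota_S$.
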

\begin{proof}
  The proofs of \cref{cns:fiop_sl,cns:fis_sl} adapt. For the construction of $\sigma_{*}$ one shall consider the evaluation map
  \[
  \operatorname{ev}: \operatorname{Diff}_{c}(\R^{d}) \to \emb{\overline{D}^{d-2} \times S},
  \]
  where $\overline{D}^{d-2} \times \{s\}$ is identified with the unit ball of $\R^{d-2} \subset \R^{d}$ scaled by $\frac{1}{2^{s+1}}$ and translated by $(0,\dots,0,\frac{1}{2^{s-1}})$. It is again a fibration by \cite{Pal60,Cer61}. Moreover, the space $\emb{\overline{D}^{d-2} \times S}$ is weakly equivalent to the space of $(d-2)$-framed configurations 
  \[
  \operatorname{Conf}^{(d-2)\text{-fr}}(S, \R^{d}),
  \]
  which is simply connected for $d\geq3$. For the construction of $\psi_{r}$ one chooses $\Psi_{r}$ that is identity on $\overline{D}^{d-2} \times \underline{r}$.
\end{proof}
\begin{remark}
  In case of manifold links there is no boundary condition stopping us from defining the strict action of symmetric group permuting copies of $M$. However, we want our action to be base point preserving, so we still opt to choose an action defined only up to homotopy.
\end{remark}

For $s \in \Np$ let $\tML(\{s\})$ denote the covering space of $\iML(\{s\})$ corresponding to the subgroup
\[
\im(\pi_{1}\ML(\{s\}) \to \pi_{1}\iML(\{s\}))
\]
of $\pi_{1}\iML(\{s\})$. It is pointed at a lift of $\iota$.
\begin{proposition}\label{cns:lift}
  There exists a homotopy $\FIs$ space $\tML$ such that for all finite $S \subset \Np$
  \[
  \tML(S) = \prod_{s \in S} \tML(\{s\}).
  \]
  There also exists a map of homotopy $\FIs$ spaces $\ML \to \tML$ such that the sequence
  \begin{equation}\label{eq:hmtp_fi_seq}
    \oML \to \ML \to \tML
  \end{equation}
  is a homotopy fiber sequence in each arity.
\end{proposition}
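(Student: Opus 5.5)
I plan to build $\tML$ from the one-component pieces, and then lift the map $\ML\to\iML$ to it.

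First I check that immersions split over components. Since immersions of $M\times S$ are just tuples of immersions of the copies, $\immi{M\times S}=\prod_{s\in S}\immi{M\times\{s\}}$, so $\iML(S)=\prod_s \iML(\{s\})$. I then define $\tML(S)=\prod_{s\in S}\tML(\{s\})$. The homotopy $\FIs$-structure is then that of a product functor, as in \cref{lem:ind_of_algebra} but for spaces. Projections $\psi$ forget factors, inclusions insert the base point, and a bijection $\sigma$ permutes factors. Each factor is identified with the next through a lift of the covering map $\tML(\{s\})\to\iML(\{s\})\to\iML(\{\sigma(s)\})$. Here the map between $\iML(\{s\})$ and $\iML(\{\sigma(s)\})$ comes from the fixed translation and scaling carrying $M\times\{s\}$ to $M\times\{\sigma(s)\}$.

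The map $\ML(S)\to\tML(S)$ is defined one coordinate at a time. The restriction $\ML(S)\to\ML(\{s\})\to\iML(\{s\})$ lifts to the covering $\tML(\{s\})$ by the lifting criterion: its image on $\pi_1$ lies in $\im(\pi_1\ML(\{s\})\to\pi_1\iML(\{s\}))$ by construction. The lift is unique once it is pointed. Uniqueness of pointed lifts also gives the naturality I need in $\textnormal{Ho}(\mathbf{Top}_*)$: two lifts of homotopic maps are homotopic via lifted homotopies. Compatibility with the structure maps of \cref{cns:fiop_sl,cns:fis_sl}, as adapted above, thus reduces to compatibility downstairs in $\iML$, which is already known.

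For the fiber sequence, write $p:\tML(S)\to\iML(S)$ for the covering map and $j:\ML(S)\to\tML(S)$ for the lift. Since $p$ is a covering, $\operatorname{hofib}(p\circ j)$ is a disjoint union of copies of $\operatorname{hofib}(j)$, one for each point of $p^{-1}(\iota)$ that is hit. The component containing the base point is $\operatorname{hofib}(j)$ restricted to the base point's component. So I must show two things: that $\operatorname{hofib}(j)$ is connected, and that it therefore agrees with the base-point component $\oML(S)$ of $\operatorname{hofib}(p\circ j)$.

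Connectedness is the main obstacle. From the long exact sequence, $\pi_0\operatorname{hofib}(j)=\operatorname{coker}(\pi_1\ML(S)\to\pi_1\tML(S))$ as a pointed set, and $\pi_1\tML(S)=\prod_s \im(\pi_1\ML(\{s\})\to\pi_1\iML(\{s\}))$. I need to realise each factor's loop independently. My plan is to use the structure maps $\psi$: a loop of the single component $s$ is extended by keeping the other components fixed and shrunk far away, which is the insertion map $\ML(\{s\})\to\ML(S)$. Its composite with $j$ is the inclusion of the $s$-th factor into $\pi_1\tML(S)$. This insertion has to be a genuine pointed map; here I use that, because $\iota$ lies in $\R^{d-2}$, the components can be separated in the two extra directions. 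Products of these loops then give surjectivity.

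Finally, I must check that the identification $\operatorname{hofib}(j)\simeq\oML(S)$ is compatible with the structure maps up to homotopy, so that \eqref{eq:hmtp_fi_seq} is a sequence of homotopy $\FIs$-spaces.
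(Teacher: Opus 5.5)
Your argument is correct, but it is organised differently from the paper's.

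\textbf{The paper's route.} The paper first shows, by a slab argument, that $\ML(S)\to\prod_{s}\ML(\{s\})$ is surjective on $\pi_1$. The slab argument says that each $\ML(\{s\})$ is weakly equivalent to the embeddings landing in $\R^{d-3}\times I_s\times\R^{2}$, with the intervals $I_s$ pairwise disjoint. Consequently $\prod_s\tML(\{s\})$ is literally the covering of $\iML(S)$ attached to $\im(\pi_1\ML(S)\to\pi_1\iML(S))$. Both $\ML(S)\to\tML(S)$ and all structure maps of $\tML$ are then obtained by lifting the corresponding maps for $\iML$. Naturality thus comes for free from uniqueness of lifts, and the fibre statement is declared clear.

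\textbf{Your route.} You impose a strict product-functor structure on $\tML$ and define $j$ coordinatewise; neither step needs any $\pi_1$-surjectivity. You use surjectivity only where it is genuinely needed, namely for connectedness of $\operatorname{hofib}(j)$. You obtain it formally from the insertion maps $\psi^{S}_{\{s\}}$ together with naturality of $j$, instead of a fresh geometric separation argument. You also spell out the covering-space step the paper leaves implicit: the base-point component of $\operatorname{hofib}(p\circ j)$ equals the base-point component of $\operatorname{hofib}(j)$.

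\textbf{The cost of imposing rather than lifting the structure.} There is one check you label ``already known'' that is not. Naturality of $j$ requires your product structure on $\iML(S)=\prod_s\iML(\{s\})$ to agree, up to pointed homotopy, with the paper's structure maps on $\iML$. Those maps post-compose with the diffeomorphism $\widetilde{\gamma}(1)$ for bijections and with $\Psi_r$ for inclusions, not with your affine identifications $A$. The check is easy but should be stated:
\begin{itemize}
\item For bijections, $A^{-1}\widetilde{\gamma}(1)$ fixes the relevant disc pointwise. It is isotopic to the identity via $A_t^{-1}\widetilde{\gamma}(t)$. This isotopy can be deformed to one fixing the disc because $\pi_1 V^{d}_{d-2}=0$ for $d\geq 3$.
\item For inclusions, one uses the isotopy $\Psi_r\simeq\operatorname{id}$ relative to the discs.
\end{itemize}

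\textbf{Minor points.}
\begin{itemize}
\item Your remark about separating components ``in the two extra directions'' is not what makes $\psi$ work; the paper separates along the stacking coordinate of $\R^{d-2}$. Since $\psi$ is supplied by the preceding proposition, you can simply drop that remark.
\item Your final step, compatibility of $\operatorname{hofib}(j)\simeq\oML(S)$ with the structure maps, is not required by the statement.
\end{itemize}
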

\begin{proof}
  We start by showing that
  \begin{equation}\label{eq:pi1_image}
    \im(\pi_{1}\ML(S) \to \pi_{1}\iML(S)) = \prod_{s \in S} \im(\pi_{1}\ML(\{s\}) \to \pi_{1}\iML(\{s\})),
  \end{equation}
  where we identify $\pi_{1}\iML(S)$ and $\prod_{s \in S} \pi_{1}\iML(\{s\})$. In the following commutative diagram
  \[
  \begin{tikzcd}
    \ML(S) \arrow[r] \arrow[d, "\prod \phi^{S}_{\{s\}}"] & \iML(S) \arrow[d, "\prod \phi^{S}_{\{s\}}"] \\
    \prod_{s \in S} \ML(\{s\}) \arrow[r] & \prod_{s \in S} \iML(\{s\})
  \end{tikzcd}
  \]
  the right vertical map is a homeomorphism. Hence, we need to show that the left vertical map is surjective on $\pi_{1}$. For every $s\in\Np$ we set $I_{s}$ to be an open interval such that
  \begin{itemize}
    \item the image $\iota(M \times \{s\})$ is contained in $\R^{d-3} \times I_{s} \times \R^{2}$,
    \item $I_{s} \cap I_{t} = \emptyset$ when $s \neq t$.
  \end{itemize}
  The map
  \[
  \operatorname{Emb}_{c}(M\times\{s\}, \R^{d-3} \times I_{s} \times \R^{2})_{\iota} \to \ML(\{s\})
  \]
  given by post-composition with the inclusion $\R^{d-3} \times I_{s} \times \R^{2} \subset \R^{d}$ is a weak equivalence. In particular, any element of $\pi_1 \ML(\{s\})$ can be represented by a loop $\gamma_s$ such that for any $t$ the image of $\gamma_{s}(t)$ is contained in $\R^{d-3} \times I_{s} \times \R^{2}$. Therefore, any element of $\prod_{s \in S} \pi_{1} \ML(\{s\})$ can be represented by a loop in $\ML(S)$ given by the combination of $\gamma_{s}$ for $s \in S$.
  We set
  \[
  \tML(S) \coloneq \prod_{s \in S} \tML(\{s\})
  \]
  which by \eqref{eq:pi1_image} is the covering space of $\iML(S)$ corresponding to the subgroup $\im(\pi_{1}\ML(S) \to \pi_{1}\iML(S))$ of $\pi_{1}\iML(S)$. The spaces $\tML(S)$ are also pointed as products of pointed spaces. By definition the subspace inclusion maps $\ML(S) \to \iML(S)$ uniquely lift to the maps $\ML(S) \to \tML(S)$. For a partial injection $f:S \to T$ there is a commutative diagram
  \[
  \begin{tikzcd}
    \ML(S) \arrow[r] \arrow[rr, bend left=15] \arrow[d, "f_{*}"] & \iML(S) \arrow[d, "f_{*}"] & \tML(S) \arrow[l] \arrow[d, dashed] \\
    \ML(T) \arrow[r] \arrow[rr, bend right=15] & \iML(T) & \tML(T) \arrow[l] 
  \end{tikzcd}
  \]
  where the top and the bottom maps are surjective on $\pi_{1}$. Hence, $f_{*}$ lifts uniquely. Furthermore, all the homotopies also lift producing a homotopy $\FIs$ space $\tML$. By the uniqueness of lifts it follows that the maps $\ML(S) \to \tML(S)$ assemble into a map of homotopy $\FIs$ spaces $\ML \to \tML$. Finally, it is clear that the homotopy fiber of the map $\ML(S) \to \tML(S)$ is $\oML(S)$. Therefore, there is a sequence of homotopy $\FIs$ spaces
  \[
  \oML \to \ML \to \tML
  \]
  which is a homotopy fiber sequence in each arity.
\end{proof}

\subsection{Graphs}
For a finite set $S \subset \Np$ we set $M_S$ to be a regular neighbourhood of $M \times S$ in $\R^{d-2}$ and $\widetilde{M}_S$ to be the union of $M_S$ with a fixed antiball $\R^{d-2} \setminus D^{d-2}$. Here we require the antiball to be big enough so that it does not intersect $M_S$ for any $S$. The spaces $\oembi{M_{-}}$ and $\oembi{\widetilde{M}_{-}}$ assemble together into a functor from $\cube^\op$ to $\mathbf{Top}_*$ with maps induced by inclusions $S \subset T$.
\begin{lemma}[{{\cite[Proposition 2.2]{STT2018}}}]\label{lem:regular_neigh}
  The restriction maps
  \[
  \oembci{\widetilde{M}_S} \to \oembi{M_S} \to \oembi{M \times S},
  \]
  induced by the subspace inclusions $M \times S \hookrightarrow M_S \hookrightarrow \widetilde{M}_S$, are weak equivalences. Therefore, the $\cube^{\op}$-spaces $\oembci{\widetilde{M}_-}$, $\oembi{M_-}$ and $\Res \oML$ are naturally weakly equivalent.
\end{lemma}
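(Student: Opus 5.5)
The statement splits into two restriction maps, which I will treat separately: $\oembci{\widetilde{M}_S} \to \oembi{M_S}$ and $\oembi{M_S} \to \oembi{M \times S}$. Naturality in $S$ is automatic, because every map is a restriction along a subspace inclusion, and these inclusions commute with the inclusions $S \subset T$. So the weak equivalence of $\cube^{\op}$-spaces follows once each arity is handled. Throughout, embeddings modulo immersions are homotopy fibers. A map of homotopy fiber sequences $\overline{\operatorname{Emb}} \to \operatorname{Emb} \to \operatorname{Imm}$ that is a weak equivalence on the middle and right terms is therefore a weak equivalence on fibers. The trouble is that neither the embedding spaces nor the immersion spaces are equivalent individually. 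The proof has to use that the difference between source and target is the same for embeddings and for immersions.

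For $\oembi{M_S} \to \oembi{M \times S}$, write $M_S$ as the total space of the normal disc bundle $\nu$ of $M \times S$ in $\R^{d-2}$. By the tubular neighbourhood theorem, restriction $\operatorname{Emb}(M_S,\R^d) \to \operatorname{Emb}(M \times S,\R^d)$ is equivalent to a fibration whose fiber over $f$ is the space of fiberwise linear injections of $\nu$ into the normal bundle of $f$, i.e.\ a space of sections of a Stiefel-type bundle. By Smale--Hirsch, immersions of the codimension-zero-thickened object $M_S \subset \R^{d-2}$ (crossed into $\R^d$) are equivalent to bundle monomorphisms $TM_S \to T\R^d$. Restricting to $M \times S$, these split as monomorphisms of $T(M\times S) \oplus \nu$. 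The fiber of the restriction on immersions is again the space of fiberwise injections of $\nu$ into the complement of the image of $T(M\times S)$. This agrees with the embedding fiber over each point, compatibly with the inclusion. Comparing the two fibrations then gives the equivalence on homotopy fibers over the base point component. The $\iota$-components match because the base points correspond.

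For $\oembci{\widetilde{M}_S} \to \oembi{M_S}$, the extra piece is the antiball $A=\R^{d-2}\setminus D^{d-2}$, which is disjoint from $M_S$ and fixed near infinity. Restriction to $M_S$ is a fibration, both for embeddings and for immersions. Its fiber over the base point is, respectively:
\begin{itemize}
\item for embeddings, compactly supported embeddings of $A$ in the complement of $\iota(M_S)$;
\item for immersions, compactly supported immersions of $A$ in $\R^d$.
\end{itemize}
This step is exactly \cite[Proposition 2.2]{STT2018}, and I would invoke it directly. Its underlying argument is that $\oembc{-}$ of a codimension-zero handle added at infinity is contractible. In outline, $A \simeq \mathbb{S}^{d-3} \times [0,\infty)$ with compact support conditions, so its embeddings modulo immersions are contractible by the "adding a collar" argument. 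The main obstacle is making the fiber comparison valid in the presence of the complement $\R^d \setminus M_S$. I would first try to isotope $A$ so it stays outside a large ball containing all images of $M_S$, and reduce to the case $M_S = \emptyset$, where $\oembc{A}$ is contractible.

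Finally, composing the two equivalences yields the chain of weak equivalences in each arity. Since all maps are strictly natural for inclusions, the $\cube^{\op}$-spaces $\oembci{\widetilde{M}_-}$, $\oembi{M_-}$ and $\Res\oML$ are naturally weakly equivalent.
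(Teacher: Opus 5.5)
Your argument is correct and more self-contained than the paper's. The paper gives no proof beyond citing \cite[Proposition 2.2]{STT2018} for both restriction maps; naturality in $S$ is evident since all maps are restrictions.

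For the thickening $M\times S\subset M_S$ you give the standard homotopy-cartesian-square argument, and it is sound. On the embedding side (tubular neighbourhoods) and on the immersion side (Smale--Hirsch), the fiber of restriction over $f$ is the space of monomorphisms of $\nu$ into the normal bundle of $f$, and the comparison map is the normal derivative.

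For the antiball step you overestimate the difficulty. Your remark that neither the embedding nor the immersion spaces are individually equivalent holds only for the thickening. Consider the restriction $\operatorname{Emb}_c(\widetilde{M}_S,\R^d)\to\operatorname{Emb}(M_S,\R^d)$. It is a surjective fibration. Its fiber over any $f$, not only over the base point, is the space of compactly supported embeddings of $A\cong\mathbb{S}^{d-3}\times(1,\infty)$ avoiding the compact set $f(M_S)$. This fiber is weakly contractible: precompose a compact family with an isotopy of self-embeddings of $A$ that is the identity near infinity and pushes $A$ into the region where the whole family agrees with $\iota$. The same argument contracts $\operatorname{Imm}_c(A,\R^d)$. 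Since $\operatorname{Imm}_c(\widetilde{M}_S,\R^d)=\operatorname{Imm}(M_S,\R^d)\times\operatorname{Imm}_c(A,\R^d)$, both restrictions are already weak equivalences, and hence so is the induced map of homotopy fibers.

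Your planned ``isotope $A$ outside a large ball'' is exactly this move, and it finishes the step by itself. You need neither the reduction to $\overline{\operatorname{Emb}}_c(A,\R^d)$ nor the appeal to \cite{STT2018}.

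The paper's citation buys brevity. Your route, with this simplification, makes the lemma elementary and shows that the embeddings-versus-immersions comparison is needed only for the thickening.
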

Let $\hat{M}_S = \widetilde{M}_S \cup \{\infty\}$ be a subspace of the one point compactification $\R^{d-2} \cup \{\infty\}$ of $\R^{d-2}$. The spaces $\hat{M}_-$ also assemble into $\cube$-space. In particular, there is a $\cube^{\op}$ $\mathbf{L}_\infty$ algebra $(\textnormal{HGC}_{\Omega(\hat{M}_S),d})_{>0}$.
\begin{definition}
  Let $\A$ denote a $\mathbf{C}_\infty$ algebra obtained by transferring the cdga structure from $\Omega^*(M)$ to $H^*(M)$. We write $\A^{\times \bullet}$ for the $\FIs$ $\mathbf{C}_\infty$ algebra defined in \cref{lem:ind_of_algebra}. We write $\mlgraph$ for the $\FIs$ $\mathbf{L}_\infty$ algebra $(\textnormal{HGC}_{\A^{\times \bullet},d})_{>0}$.
\end{definition}
\begin{remark}\label{rmk:strict_morphisms}
  Here we regard $\A$ as a non-unital $\mathbf{C}_\infty$ algebra and equip its finite products with componentwise operations. The structure maps are projections, permutations, and extensions by zero, hence are strict $\mathbf{C}_\infty$ morphisms. By \cite[Section 3.4]{FTW2020}, the induced structure maps of $\mlgraph$ are strict $\mathbf{L}_\infty$ morphisms.
\end{remark}
\begin{lemma}\label{lem:graph_replacement}
  There is a natural weak equivalence of $\cube^{\op}$ $\mathbf{L}_\infty$ algebras $(\textnormal{HGC}_{\Omega(\hat{M}_S),d})_{>0}$ and $\Res\mlgraph$.
\end{lemma}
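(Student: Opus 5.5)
We follow the proof of \cref{Symmetrization statement}, replacing formality of spheres by the homotopy transfer defining $\A$. The plan is to build a natural zigzag of quasi-isomorphisms of augmented $\mathbf{C}_\infty$ $\cube^{\op}$-algebras connecting $\overline{\Omega}^*(\hat{M}_\bullet)$ to $\A^{\times\bullet}$, and then apply the Hairy Graph Complex construction. That construction is functorial in $\mathbf{C}_\infty$ algebras, preserves quasi-isomorphisms, and is compatible with the truncation $(-)_{>0}$, since a quasi-isomorphism of $\mathbf{L}_\infty$ algebras induces a quasi-isomorphism of positive truncations.

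\textbf{First step.} We identify the homotopy type of $\hat{M}_S$ naturally in $S$. The regular neighbourhood $M_S$ deformation retracts onto $M\times S=\coprod_{s\in S} M\times\{s\}$. The antiball together with $\infty$ is a contractible neighbourhood of $\infty$ in $\hat{M}_S$. Hence $\hat{M}_S$ is homotopy equivalent to $(M\times S)_+$, that is, to $M\times S$ with a disjoint basepoint, and the augmentation ideal satisfies
\[
\overline{\Omega}^*(\hat{M}_S)\simeq \prod_{s\in S}\Omega^*(M).
\]
Concretely, we use restriction maps
\[
\overline{\Omega}^*(\hat{M}_S)\to \overline{\Omega}^*\bigl((M\times S)\sqcup \{\infty\}\bigr)\cong \Omega^*(M)^{\times S},
\]
where $\Omega^*(M\times\{s\})$ is identified with $\Omega^*(M)$ via the fixed scaled copies of $\iota$. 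These maps are quasi-isomorphisms and strictly natural for the inclusions $S\subset T$, because the restrictions $\hat{M}_S\subset\hat{M}_T$ are compatible with the projections $\Omega^*(M)^{\times T}\to\Omega^*(M)^{\times S}$. This gives a natural quasi-isomorphism of dg $\cube^{\op}$-algebras $\overline{\Omega}^*(\hat{M}_\bullet)\to \Res\,\Omega^*(M)^{\times\bullet}$, the latter in the sense of \cref{lem:ind_of_algebra}.

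\textbf{Second step.} We use homotopy transfer. It provides an $\infty$-quasi-isomorphism of $\mathbf{C}_\infty$ algebras $\A\rightsquigarrow\Omega^*(M)$, or in the other direction, with linear part the chosen inclusion of harmonic representatives. We take the componentwise product of $|S|$ copies of this morphism to get $\A^{\times S}\rightsquigarrow\Omega^*(M)^{\times S}$. Since the same morphism is used in every component, these products commute strictly with projections, extensions by zero and permutations, so they form a natural $\infty$-quasi-isomorphism of $\FIs$ $\mathbf{C}_\infty$ algebras, and in particular of $\cube^{\op}$ ones. Following \cref{rmk:strict_morphisms} and \cite[Section 3.4]{FTW2020}, an $\infty$-morphism of $\mathbf{C}_\infty$ algebras induces an $\mathbf{L}_\infty$ morphism of hairy graph complexes functorially. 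If only strict morphisms are allowed there, we instead rectify using a cofibrant strict cdga $B$ with quasi-isomorphisms $\Omega^*(M)\leftarrow B\to \A$, exactly as $A$ was used for spheres, and take products as before.

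\textbf{Third step.} We apply $\textnormal{HGC}_{-,d}$ and then $(-)_{>0}$ to the zigzag. This yields the required natural zigzag of $\cube^{\op}$ $\mathbf{L}_\infty$ quasi-isomorphisms.

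\textbf{Main obstacle.} The delicate point is naturality in the $\cube^{\op}$ direction at the level of $\infty$-morphisms. We must check two things: that the componentwise product of a fixed $\infty$-morphism is compatible with the augmentations, and that the hairy graph functor sends naturally commuting squares of $\infty$-morphisms to strictly commuting squares of $\mathbf{L}_\infty$ morphisms. Both should follow because all structure maps are strict and act componentwise, but this needs to be verified carefully.
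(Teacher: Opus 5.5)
Your proposal is correct and follows essentially the same route as the paper. The paper also builds a zigzag $\overline{\Omega}^*(\hat M_S)\simeq\Omega^*(M)^{\times S}\simeq\A^{\times S}$ of $\mathbf{C}_\infty$ equivalences, componentwise so that it commutes with the projections for $S\subset T$, and then invokes naturality and homotopy invariance of the hairy graph complex; your version is more explicit, since the paper leaves the intermediate object unspecified.

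One caveat concerns your fallback with a strict cdga $B$ and a strict quasi-isomorphism $B\to\A$: it cannot exist in general. Such a map is surjective from the cocycles of $B$ onto $H^*(M)$, so it would force all higher operations $m_{n\geq 3}$ of $\A$ to vanish, which would make $M$ formal. If strictness were really needed, take $B$ to be a cofibrant $\mathbf{C}_\infty$-algebra, such as the bar--cobar resolution of $\A$. Your primary route via $\infty$-functoriality of $\textnormal{HGC}$ (cf.\ \cref{rmk:strict_morphisms}) already suffices, so the fallback is not needed.
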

\begin{proof}
  We start by observing that there is a sequence of weak equivalences
  \[
  \overline{\Omega}^*(\hat{M}_S) \simeq \Omega^*(M_S) \simeq \Omega^*(M)^{\times S}
  \]
  which are compatible with the subset inclusions $S \subset T$. In particular, there is a commutative diagram of $\mathbf{C}_\infty$ algebras
  \[
  \begin{tikzcd}
    \A^{\times T} \arrow[d] & * \arrow[l] \arrow[d] \arrow[r] & \Omega^*(M)^{\times T} \arrow[d] \arrow[r] & \overline{\Omega}^*(\hat{M}_T) \arrow[d] \\
    \A^{\times S} & * \arrow[l] \arrow[r] & \Omega^*(M)^{\times S} \arrow[r] & \overline{\Omega}^*(\hat{M}_S)
  \end{tikzcd}
  \]
  where the horizontal maps are weak equivalences, the right most vertical map is induced by the inclusion $\hat{M}_S \hookrightarrow \hat{M}_T$ and all the other vertical maps are projections induced by the subset inclusion $S \subset T$. Therefore, there is a natural weak equivalence of $\cube^{\op}$ $\mathbf{C}_{\infty}$ algebras $\overline{\Omega}^{*}(\hat{M}_S)$ and $\Res \A^{\times \bullet}$. To finish the proof note that the Hairy Graph Complex construction is natural and preserves weak equivalences.
\end{proof}
\begin{proposition}\label{prop:passing_graphs_ml}
  The graded $\cube^{\op}$-modules $\Res \pi_*^\field \oML$ and $\Res H_*(\mlgraph)$ are naturally isomorphic when we restrict our attention to $* \geq 2$. Similarly, the graded $\cube$-modules $\Res H^{*}(\oML)$ and $\Res H^{*}_{\textnormal{CE}}(\mlgraph)$ are naturally isomorphic.
\end{proposition}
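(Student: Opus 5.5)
The plan is to chain together the natural identifications already in place, as in the proof of \cref{Symmetrization statement}. First, by \cref{lem:regular_neigh}, the $\cube^{\op}$-spaces $\Res \oML$ and $\oembci{\widetilde{M}_-}$ are connected by a natural zigzag of weak equivalences of pointed spaces. It therefore suffices to identify $\pi_*^\field$ and $H^*$ of the $\cube^{\op}$-space $\oembci{\widetilde{M}_-}$.

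Here $\widetilde{M}_S$ is an object of $\widetilde{\mathcal{O}}_c(\R^{d-2})$: its complement is the compact manifold $D^{d-2}\setminus M_S$. The role of $m+1$ in \cref{subsection:setting} is now played by $d-2$, so the codimension hypothesis reads $d \geq (d-2)+2$, which fails the $d \geq m+3$ form. I will need to check that the FTW models and the convergence statement apply in this setting. Granting this, I apply \cref{rational homotopy groups} with $U=\widetilde{M}_S$, noting $U_* = \hat{M}_S$. This gives isomorphisms
\[
\pi_*^\field \oembci{\widetilde{M}_S} \simeq H_*\bigl((\textnormal{HGC}_{\Omega(\hat{M}_S),d})_{>0}\bigr)
\]
and
\[
H^*(\oembci{\widetilde{M}_S}) \simeq H^*_{\textnormal{CE}}\bigl((\textnormal{HGC}_{\Omega(\hat{M}_S),d})_{>0}\bigr),
\]
both natural with respect to inclusions $S\subset T$, that is, as $\cube^{\op}$- and $\cube$-modules respectively.

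Next I apply \cref{lem:graph_replacement}, which gives a natural zigzag of $\mathbf{L}_\infty$ quasi-isomorphisms of $\cube^{\op}$ $\mathbf{L}_\infty$ algebras between $(\textnormal{HGC}_{\Omega(\hat{M}_S),d})_{>0}$ and $\Res\mlgraph$. Quasi-isomorphisms induce natural isomorphisms on homology. Since $\mathbf{L}_\infty$ quasi-isomorphisms between such (positively graded, finite type in each degree by \cref{Only finite number of graphs of fixed degree}) algebras also induce isomorphisms on Chevalley--Eilenberg cohomology, I also get natural isomorphisms on $H^*_{\textnormal{CE}}$. For the restriction $*\geq 2$, the point is that $\pi_1^\field$ is only the Malcev completion with BCH product and is a priori nonabelian. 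In degrees $\geq 2$ everything is an honest rational vector space, and the naturality of the identification in \cref{rational homotopy groups} is linear there.

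The main obstacle I expect is bookkeeping at the level of truncations and variance. The positive truncation $(-)_{>0}$ has to commute with the zigzag; for degree $1$ it involves a kernel, which is why I restrict to $*\geq 2$ for homotopy and use that the CE-cohomology statement of \cref{rational homotopy groups} already incorporates the truncation. Variance must also be handled: covariance in $U$ becomes contravariance in $S$, giving a $\cube^{\op}$-module for $\pi_*$ and, after dualizing, a $\cube$-module for cohomology. The second part of this obstacle is confirming that the $d\geq m+3$ hypothesis of the earlier results is met. Because $\iota$ lands in $\R^{d-2}$, the relevant setting should be the FTW framework for codimension $\geq 3$ embeddings of $\widetilde{M}_S$ in its $(d-2)$-dimensional slab extended to $\R^d$. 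I would justify this by the argument used in \cref{Symmetrization statement}.
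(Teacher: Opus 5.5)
Your proposal is the paper's proof. You compose the equivalences of \cref{lem:regular_neigh}, the natural identifications of \cref{rational homotopy groups} at $U=\widetilde{M}_S$ (so $U_*=\hat{M}_S$), and the zigzag of \cref{lem:graph_replacement}, which induces isomorphisms on $H_*$ and on $H^*_{\textnormal{CE}}$ of the positively truncated $\mathbf{L}_\infty$ algebras.

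Two side remarks in your write-up are off, though neither affects the argument.

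First, the hypothesis you flag is not an obstacle. Your own inequality $d\geq (d-2)+2$ holds: with $\R^{d-2}$ in the role of $\R^{m+1}$, the condition $d\geq m+3$ reads $d\geq (d-3)+3$. Geometrically, a closed manifold embedded in $\R^{d-2}$ has dimension $\leq d-3$, so relative to the antiball $\widetilde{M}_S$ has handle dimension $\leq d-3$. No appeal to \cref{Symmetrization statement} is needed, and that proof does not address codimension anyway.

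Second, \cref{Only finite number of graphs of fixed degree} does not apply to the infinite-dimensional $\Omega(\hat{M}_S)$. It is also unnecessary: the truncation to positive degrees already makes $H^*_{\textnormal{CE}}$ invariant under the zigzag.
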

\begin{proof}
  By \cref{rational homotopy groups} there is an isomorphism of graded $\cube^{\op}$-modules 
  \[
  \pi_*^\field \overline{\textnormal{Emb}}_c(\widetilde{M}_S, \R^d)_\iota \simeq H_* \bigl((\textnormal{HGC}_{\Omega(\hat{M}_S),d})_{>0}\bigr),
  \]
  and an isomorphism of graded $\cube$-modules 
  \[
  H^{*}(\overline{\textnormal{Emb}}_c(\widetilde{M}_S, \R^d)_\iota; \field) \simeq H^{*}_{\textnormal{CE}} \bigl((\textnormal{HGC}_{\Omega(\hat{M}_S),d})_{>0}\bigr).
  \]
  The proposition follows from \cref{lem:regular_neigh,lem:graph_replacement}.
\end{proof}

\subsection{Representation stability}
\begin{lemma}\label{lem:computation_graphs_ml}
  The $\FIs$-module $\mlgraph_n$ is finitely generated in arity $\leq \frac{n+d-3}{d-m-2}$. In particular, the same is true for its homology $H_{n}(\mlgraph)$.
\end{lemma}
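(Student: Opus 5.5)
We follow the proof of \cref{Computation for graph complexes of string links}, now at the chain level. First we identify the cross effect of the graded $\FIs$-module $\mlgraph_n$. The augmentation ideal of $\A^{\times T}$ decomposes as $\bigoplus_{t\in T}\bar{\A}_t$, where $\bar{\A}_t$ is the positive-degree part of $H^*(M)$ in the $t$-th copy. More precisely, we take the reduced part of $\A$; if $M$ is disconnected we keep the extra degree-$0$ classes inside $\bar{\A}_t$. Since the structure maps are strict (\cref{rmk:strict_morphisms}), $\mlgraph(T)$ is spanned by homogeneous graphs. The projection $\operatorname{pr}_{T\setminus\{t\}}$ acts on a graph by killing every decoration lying in $\bar{\A}_t$. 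A graph is therefore sent to zero exactly when some hair is decorated in $\bar{\A}_t$, and otherwise it is sent to itself. It follows that $\crosseffect(\mlgraph_n)(T)$ equals the span of the surjective homogeneous graphs of degree $n$, i.e.\ the degree-$n$ part of $(\textnormal{HGC}^{\textnormal{surj}}_{\A^{\times T},d})_{>0}$. We should check that the truncation in degree $1$, which takes the kernel of the differential, does not spoil this. The differential preserves the set of indices used by the hairs, since joining produces products within a single factor. Hence the kernel also splits along this index set, and the cross effect is still the surjective part.

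Next we bound the arity. A surjective homogeneous graph in arity $r=|T|$ has at least $r$ external vertices. The decorations lie in $\A$, which is concentrated in degrees $\leq m$, and for $d\geq 4$ \cref{Lower bound on degree of Hairy Graphs} gives $n\geq (d-m-2)r-d+3$. Hence $\crosseffect(\mlgraph_n)$ vanishes for $r>\frac{n+d-3}{d-m-2}$, which is the required generation bound by the definition of generation via $\crosseffect$. Finiteness in each arity follows from \cref{Only finite number of graphs of fixed degree}, applied to the finite-dimensional algebra $\A^{\times r}$ (the lemma is stated for cdgas, but its proof only counts graphs and decorations, so it applies verbatim to $\mathbf{C}_\infty$ decorations). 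The low cases $d=3$ (where necessarily $m=0$), or more generally $d<m+3$, need separate care. In those cases the degree bound must be taken directly from \cref{degree of graph} and the count of edges, or we should note that the standing hypotheses exclude them. Actually, \cref{degree of graph} already gives $\deg G\ge \frac{d-3}{2}|V^I|+\frac{d-2m-1}{2}|V^E|$, but this is weaker. I expect to simply restrict to $d\geq m+3$, as in the setting of \cref{rational homotopy type}.

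Finally we pass to homology. By \cref{injective and projective homology} and \cref{Equivalence of FB and FI_sharp modules}, the dg $\FIs$-module $\mlgraph$ is isomorphic to $\Ind\crosseffect(\mlgraph)$, because the differential commutes with the $\FIs$ structure. By \cref{homology of the free functors} we then get $H_n(\mlgraph)\cong\Ind H_n(\crosseffect(\mlgraph))$. The right-hand side is supported in the same arities and is finite dimensional, which gives the claim. Alternatively, one can argue directly: $H_n$ is a subquotient of $\mlgraph_n$, so \cref{lem:thick} applies. The main obstacle is the identification of the cross effect in the first step. This requires that the transferred $\mathbf{C}_\infty$ structure on $\A^{\times T}$ be componentwise, so that the higher operations never mix hairs decorated by different factors, and that the $\mathbf{L}_\infty$ structure maps be strict. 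Both points are granted by \cref{rmk:strict_morphisms}.
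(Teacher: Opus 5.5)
Your proposal is correct for $d\geq 4$ and is exactly the adaptation of the proof of \cref{Computation for graph complexes of string links} that the paper invokes: the cross effect is the span of surjective homogeneous graphs, the arity bound comes from \cref{Lower bound on degree of Hairy Graphs}, finiteness comes from \cref{Only finite number of graphs of fixed degree}, and homology commutes with $\Ind$. One small slip: $\bar{\A}_t$ is the whole $t$-th copy of $H^*(M)$, unit included, since $\hat{M}_S\simeq (M\times S)_+$; this changes nothing in the argument.

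Your boundary remark should be sharpened. The condition $d\geq m+3$ is automatic here, because a closed $m$-manifold embeds in $\R^{d-2}$ only if $m\leq d-3$, so restricting to it excludes nothing. The hypothesis genuinely needed is $d\geq 4$. For $d=3$, $m=0$ the chain-level statement fails: every graph whose internal vertices are trivalent has degree $|V^E|$ whatever its loop order (e.g.\ chains of double edges joining two hairs decorated in different copies), so $\mlgraph_n$ is not arity-wise finite-dimensional. The paper's ``adapts'' tacitly assumes $d\geq 4$ as well.
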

\begin{proof}
  The proof of \cref{Computation for graph complexes of string links} adapts.
\end{proof}
\begin{theorem}\label{thm:homotopy_ml}
    Let $d \geq 3$ and let $\iota: M \hookrightarrow \R^{d-2}$ be an embedding of a closed smooth manifold of dimension $m$. The $\FIs$-module $\pi^{\field}_{n}(\ML)$ is finitely generated in arity $\leq \frac{n+d-3}{d-m-2}$ for any $n \geq 2$.
\end{theorem}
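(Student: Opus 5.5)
We follow the string-link proof of \cref{thm:homotopy_sl}, with the homotopy fiber sequence \eqref{eq:hmtp_fi_seq} of \cref{cns:lift} in place of \eqref{eq:seq_sl}.

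\textbf{Step 1: the long exact sequence.} Since $\oML(S)\to\ML(S)\to\tML(S)$ is a homotopy fiber sequence in each arity, naturally in the homotopy $\FIs$ structure, we obtain a long exact sequence of $\FIs$-groups. For $n\geq 2$ all groups involved are abelian, so rationalisation is exact. This gives an exact sequence of $\FIs$-modules
\[
\pi^\field_n\oML \to \pi^\field_n\ML \to \pi^\field_n\tML .
\]
By \cref{lem:thick} it then suffices to show that the two outer terms are finitely generated in arity $\leq \frac{n+d-3}{d-m-2}$.

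\textbf{Step 2: the fiber term.} Generation in a given arity can be checked after restricting to $\cube^{\op}$, by \cref{lem:properties_cube}. By \cref{prop:passing_graphs_ml}, for $n\geq 2$ we have $\Res\pi^\field_n\oML\cong \Res H_n(\mlgraph)$. By \cref{lem:computation_graphs_ml}, $H_n(\mlgraph)$ is finitely generated in arity $\leq\frac{n+d-3}{d-m-2}$, so the fiber term satisfies the bound. We must also confirm that arity-wise finite dimensionality transfers along the isomorphism, which it does since the isomorphism is arity-wise. One point needs checking here: \cref{lem:computation_graphs_ml} relies on \cref{Only finite number of graphs of fixed degree} and \cref{Lower bound on degree of Hairy Graphs}, which require $d\geq m+3$. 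This hypothesis holds because $M$ embeds in $\R^{d-2}$, hence $m\leq d-3$ (closed manifolds have codimension at least one in Euclidean space).

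\textbf{Step 3: the base term.} Since $\tML(S)=\prod_{s\in S}\tML(\{s\})$ is a covering of a product, for $n\geq 2$ we get
\[
\pi_n\tML(S)\cong\pi_n\iML(S)\cong \bigoplus_{s\in S}\pi_n\iML(\{s\}).
\]
Hence $\pi^\field_n\tML$ is induced from arity $1$ and is generated in arity $\leq 1\leq\frac{n+d-3}{d-m-2}$. The inequality holds because $n\geq 2$ and $d-m-2\geq 1$ give $n+d-3\geq d-1\geq d-m-2$.

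For finite dimensionality, Smale--Hirsch \cite{Hir59} identifies $\imm{M}$ with the space of formal immersions, that is, sections of a bundle over $M$ with fiber the Stiefel manifold $V^d_m$. Since $M$ is a compact finite CW complex and $\pi_*^\field V^d_m$ is finite dimensional \cite{Ser53}, an induction over cells (or the obstruction or Federer spectral sequence) gives finite dimensional rational homotopy groups in degrees $\geq 2$ for each component.

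\textbf{Main obstacle.} The main difficulty is making Step 1 rigorous. The homotopy fiber sequence is only natural up to homotopy, so we must check that the connecting maps and the rationalised long exact sequence are compatible with the homotopy $\FIs$ structure maps. We must also handle nilpotency and basepoint issues, since $\ML(S)$ is not a loop space. Restricting to $n\geq 2$ avoids non-abelian $\pi_1$. Naturality on $\pi_n$ only depends on homotopy classes of pointed maps, so the homotopy $\FIs$ structure suffices.
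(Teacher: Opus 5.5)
Your proposal is correct and follows essentially the same route as the paper: the fiber sequence of \cref{cns:lift} gives an exact sequence of $\FIs$-modules for $n\geq 2$, the fiber term is handled by \cref{prop:passing_graphs_ml} and \cref{lem:computation_graphs_ml}, and the base term is generated in arity $1$, with finiteness coming from Smale--Hirsch and the section space of the Stiefel-manifold bundle. The differences are cosmetic: you work with $\pi_n\tML$ (isomorphic to $\pi_n\iML$ for $n\geq 2$), you make explicit the checks $d\geq m+3$ and $1\leq \frac{n+d-3}{d-m-2}$, and you sketch finiteness of the section-space homotopy groups by cell induction where the paper cites \cite[Lemma 1.14]{BKK2024}.
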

\begin{proof}
  Most of the proof of \cref{thm:homotopy_sl} adapts. By \cref{cns:lift} there is a sequence
  \[
  \oML \to \ML \to \tML
  \] 
  which is a homotopy fiber sequence. It induces an exact sequence of $\FIs$-modules
  \[
  \pi^{\field}_{n}\oML \to \pi^{\field}_{n}\ML \to \pi^{\field}_{n}\iML
  \]
  for $n \geq 2$. The $\FIs$-module $\pi^{\field}_{n}\oML$ satisfies the statement of the theorem by \cref{lem:computation_graphs_ml}. The $\FIs$-module $\pi^{\field}_{n}\iML$ is generated in arity $1$. It is left to show that $\pi^{\field}_{n}\iML$ is finitely generated which will follow from the fact that $\pi_{n}\imm{M}$ is a finitely generated abelian group. By Smale--Hirsch theory \cite{Hir59} there is a sequence of weak equivalences 
  \[
  \imm{M} \to \operatorname{Imm}^{f}(M, \R^{d}) \to \Gamma(\operatorname{Mon}(TM, T\R^{d}), M),
  \]
  where $\operatorname{Imm}^{f}(M, \R^{d})$ is the space of formal immersions and $\Gamma(\operatorname{Mon}(TM, T\R^{d}), M)$ is the space of sections of the fiber bundle $\operatorname{Mon}(TM, T\R^{d}) \to M$. The latter has finitely generated higher homotopy groups by \cite[Lemma 1.14]{BKK2024} since its fiber is the Stiefel manifold $V^{d}_{m}$.
\end{proof}
\begin{lemma}\label{lem:cohomology_mli}
  Let $d \geq 3$ and let $\iota: M \hookrightarrow \R^{d-2}$ be an embedding of a closed smooth manifold of dimension $m$. The graded $\FIs$-module $H^*(\oML)$ is of finite type with slope $\leq \frac{d-2}{d-m-2}$.
\end{lemma}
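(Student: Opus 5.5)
We will compute the cohomology of $\oML$ through its graph complex model. By \cref{prop:passing_graphs_ml} the graded $\cube$-modules $\Res H^{*}(\oML)$ and $\Res H^{*}_{\textnormal{CE}}(\mlgraph)$ are naturally isomorphic. By \cref{lem:properties_cube}, finite generation and generation bounds for an $\FIs$-module can be read off from its underlying $\cube$-module. So it suffices to show that $H^{*}_{\textnormal{CE}}(\mlgraph)$, viewed as a graded $\FIs$-module, is of finite type with slope $\leq \frac{d-2}{d-m-2}$. Unlike the string link case, $\mlgraph(S)$ is a genuine $\mathbf{L}_\infty$ algebra, since $H^*(M)$ has nontrivial products and $\A$ may have higher operations. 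We therefore cannot simply take $\operatorname{Sym}$ of the dual of $\pi_*$, and we work at chain level instead.

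The Chevalley--Eilenberg cochain complex $C^{*}_{\textnormal{CE}}(\mlgraph(S))$ is, as a graded vector space, $\widehat{\operatorname{Sym}}\bigl(\mlgraph(S)[1]\bigr)^{\vee}$. By \cref{rmk:strict_morphisms} the structure maps of $\mlgraph$ are strict $\mathbf{L}_\infty$ morphisms, so $C^{*}_{\textnormal{CE}}(\mlgraph)$ is a dg $\FIs$-algebra, freely generated as a graded algebra by the graded $\FIs$-module $(\mlgraph[1])^{\vee}$. The plan has three steps.

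\textbf{Step 1 (generators).} By \cref{lem:computation_graphs_ml}, $\mlgraph_n$ is finitely generated in arity $\leq \frac{n+d-3}{d-m-2}$. The truncation $(-)_{>0}$ puts it in positive degrees, so after the shift $n \mapsto n+1$ the degree-$t$ generators satisfy $t \geq 1$ and sit in arity $\leq \frac{t+d-3}{d-m-2} \leq t\frac{d-2}{d-m-2}$. By \cref{prop:properties_dual} the dual has the same bounds, so the generating module $V = (\mlgraph[1])^{\vee}$ is of finite type with slope $\leq \frac{d-2}{d-m-2}$ and vanishes in degrees $\leq 0$.

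\textbf{Step 2 (cochains).} Because $V$ is concentrated in positive degrees and is finite-dimensional in each degree and arity, the completed symmetric algebra agrees degreewise with the ordinary one. \cref{prop:algebra_slope} then shows that $C^{*}_{\textnormal{CE}}(\mlgraph)$ is of finite type with slope $\leq \frac{d-2}{d-m-2}$.

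\textbf{Step 3 (cohomology).} The CE differential is a map of $\FIs$-modules, so cocycles and coboundaries are $\FIs$-submodules. By \cref{lem:thick} the subquotient $H^{t}_{\textnormal{CE}}$ inherits finite generation in arity $\leq t\frac{d-2}{d-m-2}$. Since $\FIs$-modules are free and $\crosseffect$ is exact by \cref{Exactness of functor homology}, this passage is clean.

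The main obstacle is the grading bookkeeping, which we expect to be subtle on two points. First, \cref{rmk:conventions} says the complex is homologically graded and shifted by one. We must check that the CE generators really lie in positive cohomological degree; degree-$1$ elements of $\mlgraph$ are only the cycles. Second, \cref{lem:computation_graphs_ml} relies on \cref{Lower bound on degree of Hairy Graphs}, which assumes $d \geq 4$ and $d \geq m+3$. We will either note that these hypotheses are implicit here, which they are since $M$ embeds in $\R^{d-2}$ with codimension enforced by the FTW model, or we will verify that the shift preserves the inequality $\frac{t+d-3}{d-m-2} \leq t\frac{d-2}{d-m-2}$ for all $t\geq1$.
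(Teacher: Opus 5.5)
Your proposal is essentially the paper's proof. Both arguments pass from $H^{*}(\oML)$ to $H^{*}_{\textnormal{CE}}(\mlgraph)$ on $\cube$-modules via \cref{prop:passing_graphs_ml} and \cref{lem:properties_cube}, bound the free generators by \cref{lem:computation_graphs_ml}, apply \cref{prop:algebra_slope} and \cref{prop:properties_dual}, and pass to cohomology using strictness of the structure maps and \cref{lem:thick}. The paper applies $\operatorname{Sym}$ first and dualizes afterwards, which agrees degreewise with your order since everything is of finite type in positive degrees over $\field$.

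The grading worry you flag resolves as in the paper. By \cref{rmk:conventions}, $\mlgraph$ is already shifted, so $C^{*}_{\textnormal{CE}}(\mlgraph)=\operatorname{Sym}(\mlgraph)^{\vee}$ with no extra $[1]$. The generators sit in degree $t\geq 1$, dual to $\mlgraph_t$; this includes the degree-$1$ cycles, and degrees $\leq 0$ are killed by the truncation. The inequality $\frac{t+d-3}{d-m-2}\leq t\frac{d-2}{d-m-2}$ for $t\geq 1$ is then exactly what is needed.
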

\begin{proof}
  By our grading conventions (see \cref{rmk:conventions}) the underlying graded $\FIs$-module of $C^{*}_{\textnormal{CE}}(\mlgraph)$ is simply $\operatorname{Sym}(\mlgraph)^{\vee}$. Note that here the dual is taken degree-wise. By \cref{lem:computation_graphs_ml} the graded $\FIs$-module $\mlgraph$ is of finite type with slope $\leq \frac{d-2}{d-m-2}$ since
  \[
  \frac{n+d-3}{d-m-2} \leq n\frac{d-2}{d-m-2}, \text{ for } n \geq 1.
  \]
  It is also trivial in degree $0$ by definition. By \cref{prop:algebra_slope} the free symmetric algebra $\operatorname{Sym}(\mlgraph)$ satisfies the statement of the theorem, hence, by \cref{prop:properties_dual} so does $C^{*}_{\textnormal{CE}}(\mlgraph) = \operatorname{Sym}(\mlgraph)^{\vee}$. Finally, since all the structure maps of $\mlgraph$ are strict $C^{*}_{\textnormal{CE}}(\mlgraph)$ is a well-defined dg $\FIs$-module. Therefore, its homology $H^{*}_{\textnormal{CE}}(\mlgraph)$ satisfies the statement of the theorem. The proof is finished by \cref{lem:properties_cube,prop:passing_graphs_ml}.
\end{proof}
\begin{theorem}\label{thm:cohomology_ml}
  Let $d \geq 3$ and let $\iota$ be an embedding of an $m$ dimensional closed manifold $M$ into $\R^{d-2}$. The graded $\FIs$-module $H^*(\ML; \mathbb{Q})$ is of finite type with slope  $\leq \frac{d-2}{d-m-2}$.
\end{theorem}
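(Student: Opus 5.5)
The plan is to run the Serre spectral sequence of the homotopy fiber sequence $\oML \to \ML \to \tML$ from \cref{cns:lift}, arity by arity, and to control the $\FIs$-structure by working only with the underlying $\cube$-modules. \cref{lem:properties_cube} lets us forget the symmetric group actions, and \cref{lem:thick} gives closure under subquotients and extensions. Since each $\tML(S)$ is not simply connected, the $E_2$-page is $H^p(\tML(S); \mathcal{H}^q(\oML(S)))$ with local coefficients.

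The first step is to show that the local system is rationally trivial, or at least nilpotent in a controlled way. I would argue as follows. By construction $\tML(S)$ is the cover of $\iML(S)$ corresponding to the image of $\pi_1\ML(S)$, so $\pi_1\ML(S)\to\pi_1\tML(S)$ is surjective. The monodromy action of $\pi_1\tML(S)$ on $H^*(\oML(S))$ therefore factors through the action of $\pi_1\ML(S)$ on the fiber. Using the product decomposition $\tML(S)=\prod_s\tML(\{s\})$ and the loops $\gamma_s$ supported in the slabs $\R^{d-3}\times I_s\times\R^2$ (proof of \cref{cns:lift}), the action of the $s$-th factor should be computed by a single-component loop.

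I expect this monodromy step to be the main obstacle. If the action is not trivial, the fallback is to show it is unipotent, filtering $H^q(\oML(S))$ by the arity filtration coming from the $\FIs$ structure, $H^q(\oML)\cong \Ind\,\crosseffect$ by \cref{injective and projective homology}. The $E_2$-term is then filtered by pieces with trivial coefficients, each bounded in arity by the same estimate.

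Assuming rational triviality (or unipotence), the next step bounds $E_2^{p,q}\cong H^p(\tML(S))\otimes H^q(\oML(S))$ as a $\cube$-module. Naturality of the spectral sequence in $S$ uses the strict $\cube^{\op}$-structure on the fiber sequence. The base factor $H^*(\tML)$ is $\Ind$ of $H^*(\tML(\{1\}))$ concentrated in arity $1$, by the Künneth formula applied to the product. Its degree-$p$ part is finite dimensional, because $\tML(\{1\})$ is a finite cover of the section space in \cref{thm:homotopy_ml}; I would check finiteness of $\pi_1$ separately, or pass through the rational homotopy and finite generation of $\pi_*$ of the section space. Each degree is generated in arity $\le p$, which is at most $p\frac{d-2}{d-m-2}$ since $d-2\ge d-m-2$. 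The fiber factor has slope $\le\frac{d-2}{d-m-2}$ by \cref{lem:cohomology_mli}. The tensor product of induced modules of slopes $\le\kappa$ has slope $\le\kappa$, with arities adding as degrees add, so $E_2^{p,q}$ is finitely generated in arity $\le (p+q)\frac{d-2}{d-m-2}$.

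Finally, the $E_\infty$-page consists of subquotients of $E_2$, and $H^n(\ML)$ has a finite filtration with these subquotients as its graded pieces. Applying \cref{lem:thick} for $\cube$-modules and then \cref{lem:properties_cube} gives the theorem.
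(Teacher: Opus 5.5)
\textbf{Gap: the local system.} Your argument breaks at the local-system step, and neither of your two routes closes it. Surjectivity of $\pi_1\ML(S)\to\pi_1\tML(S)$ does not make the monodromy on $H^*(\oML(S))$ trivial. For instance, the Klein bottle fibred over $S^1$ has a $\pi_1$-surjective projection, yet its monodromy acts by $-1$ on $H^1$ of the fibre. The string-link case avoided local systems only because all spaces there are loop spaces, and $\ML(S)$ is not one.

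The unipotent fallback does not help either. Since $\oML(\emptyset)$ is a point, for $q>0$ all of $H^q(\oML(\{s\}))$ is the single cross-effect piece $\crosseffect H^q(\oML)(\{s\})$. So the arity filtration is trivial in arity one, and its graded piece carries the full action of $\pi_1\tML(\{s\})$. Likewise, $\pi_1\tML(S)$ may act nontrivially on $\crosseffect H^q(\oML)(S)$. Hence the identification $E_2^{p,q}\cong H^p(\tML(S))\otimes H^q(\oML(S))$ is unsupported, and everything after it rests on it.

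\textbf{The missing idea: locality of the action, not triviality.} Write $\kappa=\frac{d-2}{d-m-2}$. The paper shows that $H^q(\oML)(T)=\bigoplus_{S\subset T,\,|S|\le q\kappa}\crosseffect H^q(\oML)(S)$ is a splitting of $\pi_1\tML(T)$-modules, with $\pi_1\tML(T\setminus S)$ acting trivially on the $S$-summand.
\begin{itemize}
\item Commuting the action with $\phi^T_S$ is immediate from the path model of the homotopy fibre.
\item To make the projectors $(\phi^S_{S'})^*(\psi^S_{S'})^*$ equivariant, the action must also commute with $\psi^T_S$ up to homotopy. This is a genuine geometric argument. The component added by $\psi_r$ lies in a slab $\R^{d-3}\times I\times\R^2$ that misses the other components. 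The homotopy fibre for embeddings into the slab is still connected, so the path from $(\iota,c_\iota*\gamma_{r+1})$ to $(\iota,c_\iota)$ can be chosen inside the slab.
\end{itemize}
K\"unneth with local coefficients for $\tML(T)=\tML(S)\times\tML(T\setminus S)$ then gives $E_2^{p,q}(T)=\bigoplus_S\bigoplus_{i+j=p}H^i(\tML(S);\crosseffect H^q(\oML)(S))\otimes H^j(\tML(T\setminus S))$. This has arity $\le |S|+j\le q\kappa+p\le (p+q)\kappa$, whatever $\pi_1\tML(S)$ does.

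\textbf{Two smaller points.}
\begin{itemize}
\item \cref{lem:thick} is a statement about $\FIs$-modules and fails for $\cube$-modules. The constant $\cube$-module $\field$ is generated in arity $0$, but its submodule supported on sets of size $\ge k$ is not. So passing from $E_2$ to $E_\infty$ requires the spectral sequence to be one of $\FIs$-modules, not merely of $\cube$-modules, contrary to your plan of using only the strict $\cube^{\op}$-structure.
\item Nothing guarantees that $\tML(\{1\})$ is a finite cover, so finiteness should not be routed through $E_2$. The paper proves only arity bounds on $E_2$, which need no finiteness since $\crosseffect$ is exact. It gets finite-dimensionality of $H^n(\ML(S))$ directly from a finite-type CW model of the embedding space.
\end{itemize}
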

\begin{proof}
  Our starting point is the homotopy fiber sequence of
  \[
  \oML(T) \to \ML(T) \to \tML(T).
  \]
  Its associated Serre spectral sequence with local coefficients is
  \begin{equation}\label{eq:sss}
    E^{p,q}_{2}(T) = H^{p}(\tML(T); H^{q}(\oML)(T)) \implies H^{p+q}(\ML)(T).
  \end{equation}
  It is natural in $T$. We will show that for fixed $(p,q)$ the resulting $\FIs$-module $E^{p,q}_{2}$ is generated in arity $\leq (p+q)\kappa$, where
  \[
  \kappa \coloneq \frac{d-2}{d-m-2} \geq 1.
  \]
  From this we will conclude by \cref{lem:thick} that $E^{p,q}_{\infty}$ is generated in arity $\leq (p+q)\kappa$ and, hence, the $\FIs$-module
  \[
  H^{n}(\ML) \simeq \bigoplus_{\substack{p+q=n \\ p,q \geq 0}} E^{p,q}_{\infty}
  \]
  is generated in arity $\leq n\kappa$.

  The $\FIs$-module $H^{q}(\oML)$ is generated in arity $\leq q\kappa$. It can be decomposed as
  \begin{equation}\label{eq:homology_decomposition}
    H^{q}(\oML)(T) = \bigoplus_{\substack{S \subset T \\ \lvert S \rvert \leq q\kappa}} \crosseffect H^{q}(\oML)(S).
  \end{equation}
  We wish to show that every summand is a subrepresentation of $\pi_{1}\tML(T)$. Let $\gamma_{T}$ denote an element of $\Omega \tML(T)$. It acts on
  \[
  \oML(T) = \{(x, \eta) \mid x\in\ML(T); \, \eta: [0,1] \to \tML(T), \eta(0)=x, \eta(1)=\iota\}
  \]
  by sending $(x, \eta)$ to $(x, \eta*\gamma_{T})$.

  We start by showing that the structure maps $\phi^{T}_{S}$ and $\psi^{T}_{S}$ behave well with respect to this action. For a subset $S \subset T$ we write $\gamma_{S}$ to denote the image of $\gamma_{T}$ under the projection $\Omega \tML(T) \to \Omega \tML(S)$. The diagram
  \[
  \begin{tikzcd}
    \oML(T) \arrow[r, "\phi^{T}_{S}"] \arrow[d, "\gamma_{T}"] & \oML(S) \arrow[d, "\gamma_{S}"] \\
    \oML(T) \arrow[r, "\phi^{T}_{S}"] & \oML(S)
  \end{tikzcd}
  \]
  is commutative by definition. In the case of $\psi^{T}_{S}$ we wish to show that the diagram
  \[
  \begin{tikzcd}
    \oML(S) \arrow[r, "\psi^{T}_{S}"] \arrow[d, "\gamma_{S}"] & \oML(T) \arrow[d, "\gamma_{T}"] \\
    \oML(S) \arrow[r, "\psi^{T}_{S}"] & \oML(T)
  \end{tikzcd}
  \]
  is commutative up to homotopy, where $\gamma_{S}$ is the projection of $\gamma_{T}$. It is enough to show that in case $S=\underline{r}$, $T=\underline{r+1}$ and arbitrary $\gamma_{\underline{r+1}} = (\gamma_{1},\dots,\gamma_{r+1}) \in \Omega \tML(\underline{r+1})$. We write an element $(x, \eta)$ of $\oML(\underline{r})$ as $((x_{1},\eta_{1}),\dots,(x_{r},\eta_{r}))$ where $(x_{i}, \eta_{i}) \in \oML(\{i\})$. Then
  \begin{align*}
    \gamma_{\underline{r+1}}\psi_{r}(x,\eta) &= ((\Psi_{r}x_{1},\Psi_{r}\eta_{1}*\gamma_{1}),\dots,(\Psi_{r}x_{r},\Psi_{r}\eta_{r}*\gamma_{r}), (\iota, c_{\iota}*\gamma_{r+1})), \\
    \psi_{r}\gamma_{\underline{r}}(x,\eta) &= ((\Psi_{r}x_{1},\Psi_{r}(\eta_{1}*\gamma_{1})),\dots,(\Psi_{r}x_{r},\Psi_{r}(\eta_{r}*\gamma_{r})), (\iota, c_{\iota})).
  \end{align*}
  By definition of $\psi_{r}$ in \cref{cns:fis_sl} there exists an open interval $I$ such that $\iota(M\times\{r+1\})$ is contained in $\R^{d-3} \times I \times \R^{2}$ and for any $x \in \ML(\underline{r})$ the image of $M \times \underline{r}$ under $\psi_{r}(x)$ is disjoint with $\R^{d-3} \times I \times \R^{2}$. On the following commutative diagram
  \[
  \begin{tikzcd}
    \operatorname{hofib} \arrow[r] \arrow[d, "\simeq"] & \operatorname{Emb}_{c}(M\times\{r+1\}, \R^{d-3} \times I \times \R^{2})_{\iota} \arrow[r] \arrow[d, "\simeq"]
    & \tML(\{r+1\}) \arrow[d, equal] \\
    \oML(\{r+1\}) \arrow[r] & \ML(\{r+1\}) \arrow[r] & \tML(\{r+1\})
  \end{tikzcd}
  \]
  the middle vertical map is a weak equivalence given by post-composing with the inclusion $\R^{d-3} \times I \times \R^{2} \subset \R^d$. Hence, the left vertical map is also a weak equivalence. In particular, since $\oML(\{r+1\})$ is path-connected there is a path $h: [0,1] \to \oML(\{r+1\})$ from $(\iota, c_{\iota}*\gamma_{r+1})$ to $(\iota, c_{\iota})$, where $c_{\iota}$ is the constant path at $\iota$, such that for any $t$ the image of the first coordinate $\operatorname{pr}_{1}h(t) \in \ML(\{r+1\})$ of $h(t)$ is contained in $\R^{d-3} \times I \times \R^{2}$. For $i\in\underline{r}$ let $h_{i}$ denote an end point preserving homotopy between $(\Psi_{r}-)*\gamma_{i}$ and $\Psi_{r}(-*\gamma_{i})$, which exists by definition of $\Psi_{r}$. We define a homotopy $\tilde{h}$ between $\gamma_{\underline{r+1}}\psi_{r}$ and $\psi_{r}\gamma_{\underline{r}}$ by setting
  \[
  \tilde{h}(x,\eta,t) = ((\Psi_{r}x_{1},h_{1}(t)),\dots,(\Psi_{r}x_{r},h_{r}(t)), h(t)).
  \]
  It is well-defined since $I$ was chosen in such a way that the element 
  \[
  (\Psi_{r}x_{1},\dots,\Psi_{r}x_{r},\operatorname{pr}_{1}h(t)) \in \ML(\underline{r+1})
  \] 
  is an embedding for any $t$.

  The next step is to show that the decomposition in \eqref{eq:homology_decomposition} is actually a decomposition of representations. Recall that $\phi^{T}_{S}\psi^{T}_{S}\simeq\operatorname{id}_{\oML(S)}$, hence, in cohomology $(\phi^{T}_{S})^{*}: H^{q}(\oML)(S) \to H^{q}(\oML)(T)$ is a direct summand inclusion, and from now on we identify $H^{q}(\oML)(S)$ with this direct summand. Suppressing the indices $T$ and $S$ for readability, for any element $x \in H^{q}(\oML)(S)$ and any $[\gamma_{T}] \in \pi_{1} \tML(T)$ we have
  \[
  \gamma^{*}_{T}\phi^{*}(x) = \phi^{*}\gamma^{*}_{S}(x).
  \]
  Therefore, $H^{q}(\oML)(S)$ is a subrepresentation and $\pi_{1}\tML(T \setminus S)$ acts trivially on it. Similarly to \eqref{eq:homology_decomposition} we can decompose
  \[
  H^{q}(\oML)(S) = \bigoplus_{S' \subset S} \crosseffect H^{q}(\oML)(S').
  \]
  To finish this step it is enough to show that for any proper subset $S' \subset S$ and any $x \in \crosseffect H^{q}(\oML)(S)$ the projection on $\crosseffect H^{q}(\oML)(S')$ of the element $\gamma^{*}_{S}(x)$ is $0$. We will actually show that projection on a bigger subspace $H^{q}(\oML)(S')$ is $0$. Such a projection is given by $(\phi^{S}_{S'})^{*}(\psi^{S}_{S'})^{*}$ and
  \[
  \phi^{*}\psi^{*}\gamma^{*}_{S}(x) = \gamma^{*}_{S}\phi^{*}\psi^{*}(x) = 0.
  \]
  We conclude that $\crosseffect H^{q}(\oML)(S)$ is a subrepresentation of $\pi_{1}\tML(T)$ and that $\pi_{1}\tML(T \setminus S)$ acts trivially on it.

  We decompose the second page \eqref{eq:sss} of the spectral sequence as
  \begin{align*}
    E^{p,q}_{2}(T) &= \bigoplus_{\substack{S \subset T \\ \lvert S \rvert \leq q\kappa}} H^{p}(\tML(T); \crosseffect H^{q}(\oML)(S)) \\
    &= \bigoplus_{\substack{S \subset T \\ \lvert S \rvert \leq q\kappa}} \bigoplus_{i+j=p} H^{i}(\tML(S); \crosseffect H^{q}(\oML)(S)) \tens{} H^{j}(\tML(T \setminus S)),
  \end{align*}
  where for the second equality we used the K\"unneth formula for cohomology with local coefficients. It follows that the $\FIs$-module $E^{p,q}_{2}$ is generated in degree $\leq p+q\kappa \leq (p+q)\kappa$.

  To finish the proof it is enough to show that $H^{n}(\ML(S))$ is finite dimensional vector space for any $n \geq 1$ and $S \subset \Np$. Let $D^{d} \subset \R^{d}$ be a ball of big enough radius such that $\iota(M \times S) \subset D^{d}$. Then
  \[
  \ML(S) = \embi{M \times S} \simeq \operatorname{Emb}_{\emptyset}(M \times S, \overline{D}^{d})_{\iota},
  \]
  where $\operatorname{Emb}_{\emptyset}(M \times S, \overline{D}^{d})_{\iota}$ denote the component of $\iota$ of the space of smooth embeddings of a compact triad-pair $M \times S \subset \overline{D}^{d}$. This space is weakly equivalent to a CW-complex with finitely many cells in each dimension by \cite[Proposition 1.9, Proposition 4.5]{BKK2024}.
\end{proof}

\bibliographystyle{alpha}
\bibliography{refs}

\end{document}